\newtheorem{theorem}{Theorem}
\newtheorem{lemma}{Lemma}
\newtheorem{claim}{Claim}
\newcommand{\case}[1]{\noindent {\bf Case #1.}}
\title{Colouring graphs with no induced six-vertex path or diamond}
\author{
Jan Goedgebeur\thanks{Department of Applied Mathematics, Computer Science and Statistics, Ghent University, 9000 Ghent, Belgium. Email: \texttt{jan.goedgebeur@ugent.be}.} \thanks{Department of Computer Science, KU Leuven Campus Kulak, 8500 Kortrijk, Belgium.} 
\and
Shenwei Huang\thanks{College of Computer Science, Nankai University, Tianjin 300350, China.  Email: \texttt{shenweihuang@nankai.edu.cn}. Supported by the National Natural Science Foundation of China (11801284) and Natural Science Foundation of Tianjin (20JCYBJC01190).}
\and
Yiao Ju\thanks{College of Computer Science, Nankai University, Tianjin 300350, China. Email: \texttt{545601319@qq.com}.}
\and
Owen Merkel\thanks{Department of Mathematics, Wilfrid Laurier University, Waterloo, Canada. Email: \texttt{owenmerkel@gmail.com}.}
}
\date{}
\begin{document}

\maketitle

\begin{abstract}

The diamond is the graph obtained by removing an edge from the complete graph on 4 vertices. A graph is ($P_6$, diamond)-free if it contains no induced subgraph isomorphic to a six-vertex path or a diamond. In this paper we show that the chromatic number of a ($P_6$, diamond)-free graph $G$ is no larger than the maximum of 6 and the clique number of $G$. We do this by reducing the problem to imperfect ($P_6$, diamond)-free graphs via the Strong Perfect Graph Theorem, dividing the imperfect graphs into several cases, and giving a proper colouring for each case. We also show that there is exactly one 6-vertex-critical ($P_6$, diamond, $K_6$)-free graph. Together with the  Lov{\'a}sz theta function, this gives a polynomial time algorithm to compute the chromatic number of ($P_6$, diamond)-free graphs.
\end{abstract}

\section{Introduction}

All graphs in this paper are finite and simple. For general graph theory notation we follow~\cite{BM08}. A \emph{$q$-colouring} of a graph $G$ assigns a colour from a colour set $\{1,\ldots,q\}$ to each vertex of $G$ such that adjacent vertices are assigned different colours. We say that a graph $G$ is \emph{$q$-colourable} if $G$ admits a $q$-colouring. The problem of deciding if a graph is \emph{$q$-colourable} is called the \emph{$q$-colouring problem}. This decision problem is NP-hard for general graphs for every $q \geq 3$, however there exist polynomial time algorithms when the input graphs are restricted to certain graph classes. For example, Chudnovsky, Spirkl, and Zhong~\cite{CSZ181,CSZ182} recently showed that the 4-colouring problem can be solved in polynomial time for the class of $P_6$-free graphs.
The \emph{chromatic number} of a graph $G$, denoted by $\chi(G)$, is the minimum number $q$ for which $G$ is $q$-colourable. 

Let $P_n$, $C_n$ and $K_n$ denote the path, cycle and complete graph on $n$ vertices, respectively. The \emph{diamond} is the graph obtained from $K_4$ by removing an edge. For two graphs $G$ and $H$, we use $G+H$ to denote the \emph{disjoint union} of $G$ and $H$, and $G\vee H$ 
to denote the graph obtained from the disjoint union of $G$ and $H$ by adding an edge between every vertex in $G$ and every vertex in $H$. For a positive integer $r$, we use $rG$ to denote the disjoint union of $r$ copies of $G$. A \emph{hole} is an induced cycle on 4 or more vertices. An \emph{antihole} is the complement of a hole. A hole or antihole is \emph{odd} or \emph{even} if it has an odd or even number of vertices. We say that a graph $G$ \emph{contains} a graph $H$ if an induced subgraph of $G$ is isomorphic to $H$. A graph $G$ is \emph{$H$-free} if it does not contain $H$. For a family $\mathcal{H}$ of graphs, $G$ is \emph{$\mathcal{H}$-free} if $G$ is $H$-free for every $H\in\mathcal{H}$. The graphs in $\mathcal{H}$ are the \emph{forbidden induced subgraphs} of the family of $\mathcal{H}$-free graphs. We write $(H_1,\ldots,H_n)$-free instead of $\{H_1,\ldots,H_n\}$-free. A \emph{clique} is a vertex set whose elements are pairwise adjacent. A vertex set is \emph{stable} if its elements are pairwise nonadjacent, and \emph{nonstable} otherwise. The \emph{clique number} of $G$, denoted by $\omega(G)$, is the size of a largest clique in $G$. Obviously, $\chi(G)\geq\omega(G)$ for any graph $G$.

A graph family $\mathcal{G}$ is \emph{hereditary} if $G\in\mathcal{G}$ implies that every induced subgraph of $G$ belongs to $\mathcal{G}$. Obviously, $\mathcal{G}$ is hereditary if and only if $\mathcal{G}$ is the class of $\mathcal{H}$-free graphs for some $\mathcal{H}$. A graph $G$ is \emph{perfect} if $\chi(H)=\omega(H)$ for each induced subgraph $H$ of $G$, and \emph{imperfect} otherwise. As a generalisation of perfect graphs, Gy{\'a}rf{\'a}s~\cite{Gy87} introduced the $\chi$-bounded graph families. A hereditary graph family $\mathcal{G}$ is \emph{$\chi$-bounded} if there is a function $f$ such that $\chi(G)\leq f(\omega(G))$ for every $G\in\mathcal{G}$. The function $f$ is called a \emph{$\chi$-binding function}. 
Chudnovsky, Robertson, Seymour and Thomas~\cite{CRST06} characterized the family of perfect graphs by forbidden induced subgraphs:

\begin{theorem}[\cite{CRST06}]\label{thm:SPGT}
A graph is perfect if and only if it does not contain an odd hole or an odd antihole as an induced subgraph.
\end{theorem}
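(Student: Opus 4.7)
The easy direction is immediate: each odd cycle $C_{2k+1}$ with $k \geq 2$ satisfies $\chi(C_{2k+1}) = 3 > 2 = \omega(C_{2k+1})$, and its complement satisfies $\chi(\overline{C_{2k+1}}) = k+1 > k = \omega(\overline{C_{2k+1}})$. Since induced subgraphs of perfect graphs are perfect, no perfect graph can contain either. The content of the theorem is therefore the converse: every graph with no odd hole and no odd antihole (a \emph{Berge graph}) is perfect. My plan is to attack this direction by induction on $|V(G)|$, routed through a structural decomposition theorem for Berge graphs.

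First I would designate a short list of \emph{basic} Berge graphs --- one should expect bipartite graphs, their complements, line graphs of bipartite graphs, their complements, and the so-called double split graphs to suffice. For each basic class, perfection is either classical (K{\"o}nig's theorem handles the bipartite case and its complement, and a theorem of Lov{\'a}sz handles line graphs of bipartite graphs) or follows from a direct colouring argument. Second I would fix a small list of \emph{decompositions} with two properties: (a) when applied to a Berge graph, the decomposition produces strictly smaller pieces that are themselves Berge, and (b) if all the pieces are perfect, then so is the original. Natural candidates are $2$-joins, complements of $2$-joins, balanced skew partitions, and homogeneous pairs, each of which should admit a gluing lemma controlling the chromatic number. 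With these in hand, the induction step is immediate: any Berge graph admitting one of the listed decompositions is perfect by the inductive hypothesis.

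Everything then reduces to the structural theorem itself: every Berge graph is either basic or admits one of the chosen decompositions. This is by far the main obstacle, and it constitutes essentially the entire content of the Chudnovsky--Robertson--Seymour--Thomas proof. The difficulty is that one must trace the interaction of a large number of local substructures (stretchers, prisms, jewels, configurations around a fixed odd hole, and so on) and show that, in the absence of any decomposition, the graph is forced into one of the basic forms. The resulting case analysis runs to roughly $150$ pages in the original argument, and I would not expect to shortcut it here; the role of this theorem in the paper is purely as a black box that lets us restrict attention to imperfect $(P_6, \mathrm{diamond})$-free graphs, which must contain an odd hole or an odd antihole.
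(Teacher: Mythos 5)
This statement is not proved in the paper at all: it is the Strong Perfect Graph Theorem, imported verbatim from Chudnovsky, Robertson, Seymour and Thomas \cite{CRST06} and used purely as a black box to conclude that every imperfect $(P_6,\text{diamond})$-free graph contains a $C_5$. So there is no in-paper proof to compare yours against. Your treatment of the easy direction is correct and complete: $\chi(C_{2k+1})=3>2=\omega(C_{2k+1})$ and $\chi(\overline{C_{2k+1}})=k+1>k=\omega(\overline{C_{2k+1}})$ for $k\geq 2$, and perfection is closed under taking induced subgraphs. For the hard direction you give an accurate high-level description of the actual CRST strategy (basic classes: bipartite graphs, their complements, line graphs of bipartite graphs, their complements, double split graphs; decompositions: $2$-joins and their complements, balanced skew partitions, homogeneous pairs), but as you yourself say, this is an outline and not a proof --- the structural theorem that every Berge graph is basic or admits one of these decompositions is the entire content of the $\sim$150-page original argument, and the gluing lemmas (that no minimum imperfect graph admits a $2$-join, a balanced skew partition, or a homogeneous pair) are themselves substantial results due to Cornu\'ejols--Cunningham, Chudnovsky et al., and Chv\'atal--Sbihi respectively, not routine colouring arguments. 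Since the paper itself treats the theorem as a citation, deferring to \cite{CRST06} exactly as the authors do is the right call; just be aware that your sketch should not be mistaken for a self-contained proof of anything beyond the easy direction.
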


In other words, the family of (odd hole, odd antihole)-free graphs is $\chi$-bounded, and its $\chi$-binding function is the identity function. Based on this theorem, researchers studied various graph families with two forbidden induced subgraphs and found several families that have a linear $\chi$-binding function. Note that every graph family mentioned in this paragraph forbids odd holes and odd antiholes on 7 or more vertices. It then follows from Theorem~\ref{thm:SPGT} that every graph in these graph families is either perfect or contains a $C_5$. In particular, Gaspers and Huang~\cite{GH17} showed that $\chi(G)\leq\frac{3}{2}\omega(G)$ for every ($P_6$,$C_4$)-free graph $G$. Karthick and Maffray~\cite{KMa18} improved the $\chi$-binding function of ($P_6$,$C_4$)-free graphs to $\frac{5}{4}\omega(G)$. The graph $P_4\vee K_1$ is called a \emph{gem}, and a \emph{co-gem} is the complement of a gem. Cameron, Huang and Merkel~\cite{CHM19} showed that $\chi(G)\leq\lfloor\frac{3}{2}\omega(G)\rfloor$ for every ($P_5$, gem)-free graph $G$. Chudnovsky, Karthick, Maceli, and Maffray~\cite{MTPF18} improved the $\chi$-binding function of ($P_5$, gem)-free graphs to $\lceil\frac{5}{4}\omega(G)\rceil$. Karthick and Maffray~\cite{KMa182} showed that $\chi(G)\leq\lceil\frac{5}{4}\omega(G)\rceil$ for every (gem, co-gem)-free graph $G$, and in~\cite{KM16} they showed that every ($P_5$, diamond)-free graph $G$ satisfies $\chi(G)\leq\omega(G)+1$. For the family of ($P_6$, diamond)-free graphs, Karthick and Mishra~\cite{KM18} showed that every ($P_6$, diamond)-free graph $G$ satisfies $\chi(G)\leq2\omega(G)+5$. In the same paper, they proved that every ($P_6$, diamond, $K_4$)-free graph is 6-colourable. Finally, Cameron, Huang, and Merkel~\cite{CHM18} improved the $\chi$-binding function of ($P_6$, diamond)-free graphs to $\omega(G)+3$.

\subsection*{Our Contributions}
%
In this paper, we prove that every ($P_6$, diamond)-free graph $G$ satisfies $\chi(G)\leq \max\{6,\omega(G)\}$ (cf.~\autoref{thm:bound} in \autoref{sec:bound}).  We do this by reducing the problem to imperfect ($P_6$, diamond)-free graphs via the Strong Perfect Graph Theorem, dividing the imperfect graphs into several cases, and giving a proper colouring for each case.

Furthermore, we prove that the chromatic number of ($P_6$, diamond)-free graphs can be determined in polynomial time (cf.~\autoref{thm:poly} in \autoref{sec:poly}). In particular, we show that there is exactly one 6-vertex-critical ($P_6$, diamond, $K_6$)-free graph. Together with the  Lov{\'a}sz theta function, this gives a polynomial time algorithm to compute the chromatic number of ($P_6$, diamond)-free graphs.

Our results are an improvement of the result of Cameron, Huang and Merkel~\cite{CHM18}, and answer an open question from~\cite{CHM18}. We believe that our proof technique for polynomial time solvability may also be useful for other graph families (see \autoref{sec:conclude}).

The remainder of the paper is organised as follows. We present some preliminaries in \autoref{sec:pre}
and show some structural properties of imperfect ($P_6$, diamond)-free graphs in \autoref{sec:imperfect}.
We prove the $\chi$-bound in \autoref{sec:bound} and prove 
that the chromatic number can be determined in polynomial time in \autoref{sec:poly}. We end with some open problems in \autoref{sec:conclude}.

\section{Preliminaries}\label{sec:pre}

Let $G=(V,E)$ be a graph. A \emph{neighbour} of a vertex $v$ is a vertex adjacent to $v$. The \emph{neighbourhood} of a vertex $v$, denoted by $N_G(v)$, is the set of neighbours of $v$. The \emph{degree} of a vertex $v$, denoted by $d(v)$, is the number of neighbours of $v$. We denote the minimum degree of the vertices of $G$ by $\delta(G)$. For $X \subseteq V$, let $N_G(X)=\bigcup_{v\in X}N_G(v)\setminus X$. For $x\in V$ (or $X\subseteq V$) and $S\subseteq V$, let $N_S(x)=N_G(x)\cap S$ (or $N_S(X)=N_G(X)\cap S$). For $x\in V$ (or $X\subseteq V$) and $Y\subseteq V$, we say that $x$ (or $X$) is \emph{complete} (resp.~\emph{anti-complete}) to $Y$ if $x$ (or every vertex in $X$) is adjacent (resp.~nonadjacent) to every vertex in $Y$. We denote the complement of $G$ by $\overline{G}$. For $S\subseteq V$, let $G[S]$ denote the subgraph of $G$ induced by $S$. We often write $S$ for $G[S]$ if the context is clear. We say that $S$ induces an $H$ if $G[S]$ is isomorphic to $H$. A clique $K\subseteq V$ is a \emph{clique cutset} if $G-K$ has more components than $G$. Two vertices $u,v\in V$ are \emph{comparable} if they are nonadjacent, and either $N_G(u)\subseteq N_G(v)$ or $N_G(v)\subseteq N_G(u)$. A component of a graph is \emph{trivial} if it has only one vertex, and \emph{nontrivial} otherwise. We say that the edges between two vertex sets $X$ and $Y$ form a \emph{matching} if every vertex in $X$ has at most one neighbour in $Y$, and every vertex in $Y$ has at most one neighbour in $X$.

Gr{\" o}tschel, Lov{\' a}sz and Schrijver~\cite{GLS84} showed that the chromatic number of a perfect graph can be computed in polynomial time. In that paper, the authors used the Lov{\'a}sz theta function:
\begin{center}
$\vartheta(G):=max\{\sum\limits_{i,j=1}^n b_{ij}:$ \\
$B=(b_{ij})$ is positive semidefinite with trace at most 1, and $b_{ij}=0$ if $ij\in E\}$.\\
\end{center}
The Lov{\'a}sz theta function satisfies that $\omega(G)\leq\vartheta(\overline{G})\leq\chi(G)$ for any graph $G$, and can be calculated in polynomial time~\cite{GLS84}.

A graph $G$ is \emph{$k$-vertex-critical} if $\chi(G)=k$, and every proper induced subgraph of $G$ has chromatic number smaller than $k$. The following properties of $k$-vertex-critical graphs are well-known.

\begin{lemma}[\cite{BM08}]\label{lem:critical}
If $G$ is a $k$-vertex-critical graph, then $G$ is connected, has no clique cutsets or comparable vertices, and $\delta(G) \geq k-1$.
\end{lemma}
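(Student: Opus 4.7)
The plan is to verify each of the four properties independently by assuming a violation and contradicting the minimality inherent in $k$-vertex-criticality, namely that every proper induced subgraph of $G$ is $(k-1)$-colourable while $G$ itself needs $k$ colours.

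First I would show \emph{connectedness}: if $G$ split into components $G_1,\ldots,G_t$ with $t\geq 2$, then $\chi(G)=\max_i \chi(G_i)=k$, so some $G_i$ already has $\chi(G_i)=k$; but $G_i$ is a proper induced subgraph of $G$, contradicting vertex-criticality. Next, for the \emph{minimum degree} bound, suppose some vertex $v$ satisfies $d(v)\leq k-2$. By criticality, $G-v$ has a proper $(k-1)$-colouring, and since $v$ has at most $k-2$ neighbours, at least one of the $k-1$ colours is missing in $N_G(v)$ and can be assigned to $v$, giving a $(k-1)$-colouring of $G$, a contradiction.

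For \emph{no comparable vertices}, suppose $u,v$ are nonadjacent with $N_G(u)\subseteq N_G(v)$. Take a $(k-1)$-colouring of $G-u$ and extend it by assigning $u$ the colour of $v$; since every neighbour of $u$ is also a neighbour of $v$, no such neighbour carries $v$'s colour, so the extension is proper, again contradicting $\chi(G)=k$. Finally, for \emph{no clique cutset}, suppose $K$ is a clique cutset separating $G-K$ into parts, pick one component and let $A$ be its vertex set and $B=V(G)\setminus(K\cup A)$. Both $G[K\cup A]$ and $G[K\cup B]$ are proper induced subgraphs, each admitting a $(k-1)$-colouring; since $K$ is a clique, its vertices receive pairwise distinct colours in each, so we can permute the colour classes on the $B$-side to agree with the $A$-side on $K$ and glue the two colourings into a proper $(k-1)$-colouring of $G$, contradicting criticality.

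None of the four steps presents a real obstacle; the only point requiring slight care is the clique-cutset argument, where one must explicitly use that $|K|\leq k-1$ (which follows from $\omega(G)\leq\chi(G)=k$ together with the fact that $K$ plus any vertex on either side would otherwise force $\omega(G)\geq k$, but in any case we only need the $|K|$ colours on $K$ to be aligned by a permutation of the $k-1$ available colours). Because the statement is a standard folklore fact cited from \cite{BM08}, I would keep the write-up compact, presenting the four bullets in the order above.
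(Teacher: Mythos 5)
Your four arguments are all correct and are exactly the standard ones; the paper itself gives no proof of this lemma, simply citing it as a well-known fact from \cite{BM08}, so there is nothing to contrast with. The only point worth noting is that in the clique-cutset step the bound $|K|\leq k-1$ is automatic from the $(k-1)$-colourability of $G[K\cup A]$, so the permutation of colour classes always exists.
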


It is easy to see that $G$ is not ($k-1$)-colourable if and only if $G$ contains a $k$-vertex-critical graph. This simple observation has an important algorithmic implication:

\begin{theorem}[Folklore]\label{thm:exhaustive}
If a hereditary graph family $\mathcal{G}$ has a finite number of $k$-vertex-critical graphs, then the ($k-1$)-colouring problem can be solved in polynomial time for $\mathcal{G}$ by simply testing if the input graph contains any of these $k$-vertex-critical graphs as induced subgraph.
\end{theorem}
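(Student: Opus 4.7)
The plan is to invoke the simple observation that the statement itself recalls, namely that a graph $G$ is $(k-1)$-colourable if and only if no induced subgraph of $G$ is $k$-vertex-critical. First I would verify this equivalence: if $G$ is $(k-1)$-colourable then so is every induced subgraph, so no induced subgraph has chromatic number $k$, let alone is $k$-vertex-critical; conversely, if $\chi(G) \geq k$, then by iteratively deleting vertices whose removal does not decrease the chromatic number we obtain an induced subgraph $H \subseteq G$ with $\chi(H) = k$ and $\chi(H-v) < k$ for every $v \in V(H)$, i.e.\ a $k$-vertex-critical induced subgraph. This step uses nothing about $\mathcal{G}$ beyond being hereditary (so that $H \in \mathcal{G}$).

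Next, let $H_1, \dots, H_m$ be the finite list of $k$-vertex-critical graphs in $\mathcal{G}$, and set $c := \max_i |V(H_i)|$, a constant depending only on $\mathcal{G}$ and $k$. Given an input graph $G \in \mathcal{G}$ on $n$ vertices, the algorithm enumerates, for each $i \in \{1, \dots, m\}$, every subset $S \subseteq V(G)$ with $|S| = |V(H_i)|$ and tests whether $G[S]$ is isomorphic to $H_i$. There are at most $\binom{n}{|V(H_i)|} = O(n^c)$ such subsets, and each isomorphism test on a constant-size induced subgraph can be done in $O(1)$ time (brute force over the constant number of bijections). Hence the total running time is $O(m \cdot n^c)$, which is polynomial in $n$.

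Finally, I would conclude by combining the two ingredients: the algorithm answers \textsc{no} to $(k-1)$-colourability precisely when some $H_i$ is found as an induced subgraph of $G$, and answers \textsc{yes} otherwise; by the equivalence from the first paragraph, this is correct. There is essentially no obstacle here, because the hypothesis delivers both the finite list and the constant bound $c$ directly; the only subtlety worth flagging is that the algorithm is a decision procedure, not a colouring procedure, so if one wanted to exhibit an actual $(k-1)$-colouring one would need additional work (e.g.\ self-reduction), but the statement only asks for the decision version.
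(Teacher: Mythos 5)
Your overall approach is exactly the one the paper intends: the theorem is stated as folklore with no written proof, resting entirely on the observation immediately preceding it (that $G$ fails to be $(k-1)$-colourable if and only if it contains a $k$-vertex-critical induced subgraph) combined with a brute-force $O(n^c)$ induced-subgraph test. Your second and third paragraphs, including the remark that heredity is what guarantees the critical induced subgraph lies in the finite list for $\mathcal{G}$, are correct and complete.

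The one step that does not work as literally written is the extraction of the $k$-vertex-critical subgraph in the converse direction. If $\chi(G) > k$ and you iteratively delete vertices \emph{whose removal does not decrease the chromatic number}, the chromatic number stays equal to $\chi(G)$ throughout, and the process terminates at a $\chi(G)$-vertex-critical graph, not a $k$-vertex-critical one. The standard fix is to delete vertices subject instead to the condition that the chromatic number of what remains is still at least $k$: when no further deletion is possible you have $H$ with $\chi(H)\geq k$ and $\chi(H-v)\leq k-1$ for every $v\in V(H)$, and since deleting one vertex lowers the chromatic number by at most one, $\chi(H)\leq\chi(H-v)+1\leq k$, hence $\chi(H)=k$ and $H$ is $k$-vertex-critical. (Alternatively, first pass to an induced subgraph with chromatic number exactly $k$, which exists because single-vertex deletions change $\chi$ by at most one, and then apply your procedure to that subgraph.) With that correction the argument is complete and coincides with the paper's intended folklore proof.
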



The concept of $k$-vertex-critical graphs is also important in the context of certifying algorithms~\cite{MMNS11}. 
An algorithm is \textit{certifying} if, along with the answer given by the algorithm, it also gives a certificate which allows to verify in polynomial time that the output of the algorithm is indeed correct. In case of the $k$-colouring problem, a canonical certificate for yes-instances would be a proper $k$-colouring of the graph while a canonical certificate for no-instances would be a $(k+1)$-vertex-critical graph.

We will also use the following two theorems in our proof:

\begin{theorem}[\cite{CSZ181,CSZ182}]\label{thm:4CP6}
The 4-colouring problem can be solved in polynomial time for the class of $P_6$-free graphs.
\end{theorem}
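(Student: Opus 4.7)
The plan is to reduce list $4$-colouring on $P_6$-free graphs to a polynomial-size family of $2$-SAT instances. Passing from $4$-colouring to the more flexible \emph{list $4$-colouring} problem (each vertex $v$ carries a list $L(v)\subseteq\{1,2,3,4\}$ and a proper colouring $c$ must satisfy $c(v)\in L(v)$) is harmless, since ordinary $4$-colouring is the case $L(v)=\{1,2,3,4\}$. The extra flexibility is crucial: it lets us record partial information discovered during branching as restrictions on the lists.

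The first reduction uses a structural theorem in the spirit of Bacs\'o--Tuza: every connected $P_6$-free graph admits a ``small'' dominating induced subgraph $D$, or at least a polynomial-size collection of candidate dominators of bounded diameter that can be enumerated efficiently. I would enumerate over all $4^{|D|}$ colourings of such a $D$; for each such colouring, every vertex $v\notin D$ has at least one neighbour in $D$ by the dominating property, so its list shrinks to size at most $3$. This reduces the original problem to polynomially many list $3$-colouring instances.

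The second reduction attacks these residual list $3$-colouring instances. Here the plan is to find, again using $P_6$-freeness, a second layer of ``pivot'' vertices whose colours, once guessed, propagate through induced paths and short holes in a controlled way until every remaining list has size at most $2$. A list $2$-colouring instance is equivalent to a $2$-SAT formula (with clauses of the form $\lnot(c(v){=}a)\lor\lnot(c(w){=}b)$ for each edge $vw$ and each forbidden pair $a,b$ of identical colours), which is solvable in linear time. This resolves the leaves of the branching tree and hence the whole problem.

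The hardest step will be the structural pair: identifying the right dominating gadget in $P_6$-free graphs and then showing that a polynomial-size set of pivots suffices to drive the residual lists down to size $2$ by purely combinatorial propagation. Without the $P_6$-free hypothesis, list $3$-colouring is already NP-hard, so the induced-path restriction must be invoked crucially in both reductions; in particular one needs to show that any long enough induced walk through vertices with three-element lists forces a contradiction or a further list reduction. Once these structural lemmas are in place, the algorithm is simply the composition of enumeration over dominators, enumeration over pivots, and $2$-SAT at the leaves.
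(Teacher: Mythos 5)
This theorem is not proved in the paper at all: it is imported as a black box from the two-part work of Chudnovsky, Spirkl and Zhong \cite{CSZ181,CSZ182}, whose actual argument is a long and intricate analysis built around the notion of an ``excellent precolouring'' of a $P_6$-free graph. So there is no in-paper proof to measure your sketch against; it has to stand on its own, and as written it does not.

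There are two concrete gaps. First, your opening reduction relies on every connected $P_6$-free graph having a ``small'' dominating induced subgraph $D$ over whose $4^{|D|}$ colourings you can enumerate. The known dominating-structure theorem for $P_6$-free graphs (van 't Hof--Paulusma, in the Bacs\'o--Tuza tradition) gives a dominating induced $C_6$ \emph{or} a dominating complete bipartite subgraph, and the latter can have unboundedly many vertices; there is no bounded-size or even polynomially enumerable family of bounded-diameter dominators in general, so the very first branching step is not polynomial. Second, and more fundamentally, the passage to list $4$-colouring is not ``harmless'': Golovach, Paulusma and Song proved that \emph{list} $4$-colouring is NP-complete already for $P_6$-free graphs. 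Hence any correct algorithm must guarantee that the residual list instances it creates lie in a very restricted, tractable subclass --- which is precisely the content of the ``excellent precolouring'' machinery in \cite{CSZ181,CSZ182} and precisely the step you label as ``the hardest step'' and defer. What remains after removing the deferred lemmas is the standard endgame (propagate until all lists have size at most $2$, then solve $2$-SAT), which is the easy and well-known part. In short, the proposal is a plan whose load-bearing components are either false as stated or left unproved, so it cannot be accepted as a proof of the theorem.
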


\begin{theorem}[\cite{RST02}]\label{thm:P6C3}
Let $G$ be a $(P_6,K_3)$-free graph with no comparable vertices. Then $G$ is 4-colourable. Furthermore, $G$ is not 3-colourable if and only if it contains the Gr{\"o}tzsch graph as an induced subgraph and is an induced subgraph of the 16-vertex Clebsch graph. (See \autoref{fig:Clebsch} for drawings of the Clebsch graph and the Gr{\"o}tzsch graph.)
\end{theorem}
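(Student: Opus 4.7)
The plan is to tackle the two claims in sequence: first $\chi(G)\leq 4$, then the structural characterisation of when $\chi(G)=4$.

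\textbf{Stage 1 (upper bound).} For 4-colourability, the natural starting point is to pick a vertex $v$ of maximum degree. Since $G$ is triangle-free, $N(v)$ is a stable set, so it can be given a single colour. It then suffices to 3-colour $G[V\setminus N(v)]$ (including $v$) under the inherited constraints. The $P_6$-free hypothesis forbids any induced path $v,a,b,c,d,e$ starting at $v$, which severely restricts how vertices of $V\setminus N[v]$ attach to $N(v)$, and the no-comparable-vertices hypothesis rules out the ``twin'' configurations that could otherwise obstruct a small colouring. An inductive argument on $|V(G)|$, perhaps combined with a case split on local configurations such as the structure of induced $C_5$'s, should produce a 4-colouring.

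\textbf{Stage 2 (characterisation).} One direction is immediate: if $G$ contains the Gr{\"o}tzsch graph then $\chi(G)\geq 4$, since that graph is 4-chromatic. Conversely, suppose $\chi(G)=4$. Then $G$ contains a 4-vertex-critical triangle-free induced subgraph $H$. Triangle-free 4-critical graphs are highly constrained, and the Gr{\"o}tzsch graph is the smallest such. Using $P_6$-freeness and the absence of comparable vertices, one should rule out the other small 4-critical triangle-free possibilities, forcing $H$ to contain a copy of Gr{\"o}tzsch. Next, fix a Gr{\"o}tzsch subgraph inside $G$ and analyse the possible neighbourhoods, in this subgraph, of each vertex $x \in V(G)\setminus V(H)$. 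The hypotheses should force these neighbourhoods to agree with those of some specific Clebsch vertex outside a fixed copy of Gr{\"o}tzsch inside Clebsch, and mutual consistency of all such extensions embeds $G$ into the Clebsch graph.

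The main obstacle is this final embedding step: showing that every $(P_6,K_3)$-free graph without comparable vertices that contains Gr{\"o}tzsch must be an induced subgraph of the specific 16-vertex Clebsch graph. This is essentially a rigidity claim: any extra vertex must attach exactly as some Clebsch vertex does, and any configuration pushing beyond 16 vertices must be excluded by producing a $P_6$, a triangle, or a comparable pair. The combinatorial heart of the argument is this tight local analysis, and finding a conceptual argument rather than a brute-force enumeration is the real challenge.
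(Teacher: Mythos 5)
First, note that the paper does not prove this statement at all: it is quoted directly from Randerath, Schiermeyer and Tewes~\cite{RST02}, so there is no internal proof to compare yours against. Judged on its own terms, your proposal is an outline rather than a proof, and both stages have genuine gaps. In Stage~1, after colouring the stable set $N(v)$ with one colour you must 3-colour $G-N(v)$, but that induced subgraph is only $(P_6,K_3)$-free --- deleting $N(v)$ can create comparable vertices, so the theorem's hypothesis is not inherited --- and even if it were inherited, your inductive hypothesis only yields \emph{4}-colourability of the smaller graph, not the 3-colourability you need. The induction does not close, and no replacement argument is offered; the phrase ``should produce a 4-colouring'' is doing all the work.

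In Stage~2, the forward direction (the Gr{\"o}tzsch graph is 4-chromatic) is fine, but the two substantive claims --- that a non-3-colourable $(P_6,K_3)$-free graph with no comparable vertices must contain the Gr{\"o}tzsch graph, and that any such graph is an induced subgraph of the Clebsch graph --- are exactly the content of the theorem, and you defer both (``one should rule out the other possibilities'', ``the real challenge''). Note also that 4-vertex-critical triangle-free graphs do \emph{not} form a small finite list (there are infinitely many of them), so ``ruling out the other small 4-critical triangle-free possibilities'' cannot be a finite check; all of the constraining power must come from $P_6$-freeness and the absence of comparable vertices, and that analysis is precisely what is missing. The honest summary is that you have restated the theorem as a plan of attack. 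If a self-contained argument is wanted, the structural analysis of~\cite{RST02}, which starts from an induced $C_5$ and builds the Gr{\"o}tzsch and Clebsch graphs layer by layer, is what has to be reproduced; otherwise the statement should simply be cited, as the paper does.
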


\begin{figure}[h!]
\centering
\begin{tikzpicture}[scale=1]
\tikzstyle{vertex}=[circle, draw, fill=white, inner sep=1pt, minimum size=5pt]
    \node[vertex](1) at (0,0) {0};
    \node[vertex](2) at (0,2) {1};
    \node[vertex](3) at (-{2*sin(36)},-{2*cos(36}) {2};
    \node[vertex](4) at ({2*cos(18)},{2*sin(18}) {3};
    \node[vertex](5) at (-{2*cos(18)},{2*sin(18}) {4};
    \node[vertex](6) at ({2*sin(36)},-{2*cos(36}) {5};
    \node[vertex](7) at (0,-2) {6};
    \node[vertex](8) at ({2*sin(36)},{2*cos(36}) {7};
    \node[vertex](9) at (-{2*cos(18)},-{2*sin(18}) {8};
    \node[vertex](10) at ({2*cos(18)},-{2*sin(18}) {9};
    \node[vertex](11) at (-{2*sin(36)},{2*cos(36}) {10};
    \node[vertex](12) at (0,-3) {11};
    \node[vertex](13) at ({3*sin(36)},{3*cos(36}) {12};
    \node[vertex](14) at (-{3*cos(18)},-{3*sin(18}) {13};
    \node[vertex](15) at ({3*cos(18)},-{3*sin(18}) {14};
    \node[vertex](16) at (-{3*sin(36)},{3*cos(36}) {15};

    \Edge(1)(2)
 	\Edge(1)(3)
 	\Edge(1)(4)
 	\Edge(1)(5)
	\Edge(1)(6)
	\Edge(2)(8)
	\Edge(2)(11)
	\Edge(3)(7)
	\Edge(3)(9)
	\Edge(4)(8)
	\Edge(4)(10)
	\Edge(5)(9)
	\Edge(5)(11)
	\Edge(6)(7)
	\Edge(6)(10)
	\Edge(7)(8)
	\Edge(8)(9)
	\Edge(9)(10)
	\Edge(10)(11)
	\Edge(7)(11)
    \Edge(4)(12)
    \Edge(5)(12)
    \Edge(7)(12)
    \Edge(5)(13)
    \Edge(6)(13)
    \Edge(8)(13)
    \Edge(2)(14)
    \Edge(6)(14)
    \Edge(9)(14)
    \Edge(2)(15)
    \Edge(3)(15)
    \Edge(10)(15)
    \Edge(3)(16)
    \Edge(4)(16)
    \Edge(11)(16)
    \Edge(12)(14)
    \Edge(12)(15)
    \Edge(13)(15)
    \Edge(13)(16)
    \Edge(14)(16)

\end{tikzpicture}
\caption{The Clebsch graph. The 11 vertices with indices $0,\ldots,10$ induce the Gr{\"o}tzsch graph.}\label{fig:Clebsch}
\end{figure}
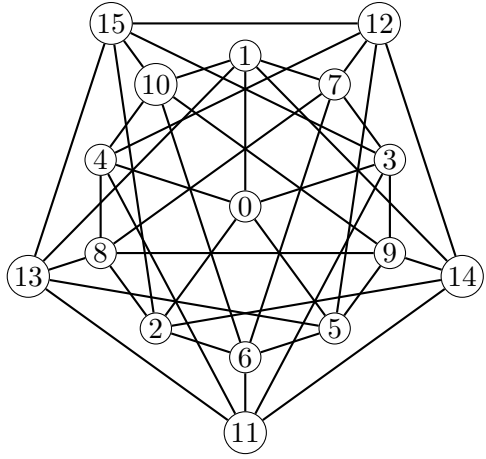

\section{The Structure of Imperfect ($P_6$, diamond)-Free Graphs}\label{sec:imperfect}

In this section we study the structure of imperfect ($P_6$, diamond)-free graphs. By \autoref{thm:SPGT}, every imperfect ($P_6$, diamond)-free graph contains a $C_5$. Let $G=(V,E)$ be an imperfect ($P_6$, diamond)-free graph. We follow the notation of~\cite{CHM18} and partition $V$ into the following subsets:

Let $Q$=\{$v_1,v_2,v_3,v_4,v_5$\} induce a $C_5$ in $G$ with edges $v_iv_{i+1}$ for $i$=1,$\ldots$,5, with all indices modulo 5.

$A_i$=\{$v\in V\backslash Q:N_Q(v)=\{v_i\}$\},

$B_{i,i+1}$=\{$v\in V\backslash Q:N_Q(v)=\{v_i,v_{i+1}\}$\},

$C_{i,i+2}$=\{$v\in V\backslash Q:N_Q(v)=\{v_i,v_{i+2}\}$\},

$F_i$=\{$v\in V\backslash Q:N_Q(v)=\{v_i,v_{i-2},v_{i+2}\}$\},

$Z$=\{$v\in V\backslash Q:N_Q(v)=\emptyset$\}.

Let $A$=$\bigcup_{i=1}^5A_i$, $B$=$\bigcup_{i=1}^5B_{i,i+1}$, $C$=$\bigcup_{i=1}^5C_{i,i+2}$, $F$=$\bigcup_{i=1}^5F_i$. Since $G$ is diamond-free, any vertex in $V\backslash Q$ cannot be adjacent to three sequential vertices $v_i,v_{i+1},v_{i+2}$ in $Q$, then $V$=$Q\cup A\cup B\cup C\cup F\cup Z$.

In~\cite{CHM18} the following 21 properties of these subsets are proved:

\begin{enumerate}[(1)]
\setlength{\itemsep}{0pt}

\item Each component of $A_i$ is a clique.\label{item:1}

\item The sets $A_i$ and $A_{i+1}$ are anti-complete.\label{item:2}

\item The sets $A_i$ and $A_{i+2}$ are complete.\label{item:3}

\item Each $B_{i,i+1}$ is a clique.\label{item:4}

\item The set $B$=$B_{i,i+1}\cup B_{i+2,i+3}$ for some $i$.\label{item:5}

\item The set $B_{i,i+1}$ is anti-complete to $A_i\cup A_{i+1}$.\label{item:6}

\item The set $B_{i,i+1}$ is complete to $A_{i-1}\cup A_{i+2}$.\label{item:7}

\item Each $C_{i,i+2}$ is a stable set.\label{item:8}

\item Each vertex in $C_{i,i+2}$ is either complete or anti-complete to each component of $A_i$ and $A_{i+2}$.\label{item:9}

\item Each vertex in $C_{i,i+2}$ has at most one neighbour in each component of $A_{i+1}$, $A_{i+3}$ and $A_{i+4}$.\label{item:10}

\item Each vertex in $C_{i,i+2}$ is anti-complete to each nontrivial component of $A_{i+1}$.\label{item:11}

\item The set $C_{i,i+2}$ is anti-complete to $B_{j,j+1}$ if $j\neq i+3$. Moreover each vertex in $C_{i,i+2}$ has at most one neighbour in $B_{i+3,i+4}$.\label{item:12}

\item Each $F_i$ has at most one vertex. Moreover, $F$ is a stable set.\label{item:13}

\item The set $F_i$ is anti-complete to $A_{i+2}\cup A_{i+3}$.\label{item:14}

\item Each vertex in $F_i$ is either complete or anti-complete to each component of $A_i$.\label{item:15}

\item Each vertex in $F_i$ has at most one neighbour in each component of $A_{i+1}$ and $A_{i+4}$.\label{item:16}

\item The set $F_i$ is anti-complete to $B_{j,j+1}$ if $j\neq i+2$ and complete to $B_{j,j+1}$ if $j=i+2$.\label{item:17}

\item The set $F_i$ is anti-complete to $C_{j,j+2}$ if $j\neq i-1$.\label{item:18}

\item If $A_i$ is not stable, then $A_{i+2}=A_{i+3}=B_{i+1,i+2}=B_{i-1,i-2}=\emptyset$.\label{item:19}

\item If $A_i$ is not empty, then each of $B_{i+1,i+2}$ and $B_{i-1,i-2}$ contains at most one vertex.\label{item:20}

\item The set $Z$ is anti-complete to $A\cup B$.\label{item:21}

\noindent Now we prove some new properties which we will use in our proofs in \autoref{sec:bound} and \autoref{sec:poly}:


\item If $A_i$, $B_{i+1,i+2}$ and $B_{i+3,i+4}$ are all nonempty, then $C_{i+1,i+3}$ (resp.\ $C_{i+2,i+4}$) is anti-complete to $A_{i+2}$ (resp.\ $A_{i+3}$).\label{item:22}

\vspace{-1em}

\begin{proof}
Suppose that $a_1\in A_i$, $a_2\in A_{i+2}$, $b_1\in B_{i+1,i+2}$, $b_2\in B_{i+3,i+4}$, $c\in C_{i+1,i+3}$ such that $a_2c\in E$. By~\ref{item:3}, $a_1a_2\in E$. By~\ref{item:6}, $b_1a_2\notin E$. By~\ref{item:7}, $a_1b_1,a_1b_2,a_2b_2\in E$. By~\ref{item:12}, $b_1c,b_2c\notin E$. Then either \{$a_1,a_2,b_2,c$\} induces a diamond or \{$b_1,a_1,a_2,c,v_{i+3},v_{i+4}$\} induces a $P_6$, depending on whether $a_1$ and $c$ are adjacent.
\end{proof}

\item Suppose that $B_{i,i+1}$ is nonempty. Then $C_{i,i+2}\cup C_{i-2,i}$ (resp.\ $C_{i-1,i+1}\cup C_{i+1,i+3}$) is complete to $A_{i-1}$ (resp.\ $A_{i+2}$). Moreover if $A_{i-1}$ (resp.\ $A_{i+2}$) is nonempty, then $C_{i,i+2}$ and $C_{i-2,i}$ (resp.\ $C_{i-1,i+1}$ and $C_{i+1,i+3}$) are anti-complete.\label{item:23}

\vspace{-1em}

\begin{proof}
Let $a\in A_{i-1}$, $b\in B_{i,i+1}$. Suppose there is a vertex $c_1\in C_{i,i+2}$ such that $ac_1\notin E$ or $c_2\in C_{i-2,i}$ such that $ac_2\notin E$. By~\ref{item:12}, $bc_1,bc_2\notin E$. Then \{$c_1,v_{i+2},v_{i-2},v_{i-1},a,b$\} or \{$c_2,v_{i-2},v_{i-1},a,b,v_{i+1}$\} induces a $P_6$. This proves the first part of the claim. Suppose that $c_1\in C_{i,i+2}$ and $c_2\in C_{i-2,i}$ are adjacent, then \{$a,c_1,c_2,v_i$\} induces a diamond.
\end{proof}

\item Suppose that $A_i\cup A_{i+1}$ is nonempty. Then $B_{i,i+1}$ is complete to $A_{i+3}$. Moreover if $B_{i,i+1}$ contains two or more vertices, then $A_{i+3}$ is empty.\label{item:24}
\vspace{-1em}
\begin{proof}
By symmetry suppose that $a_1\in A_i$, $a_2\in A_{i+3}$, $b\in B_{i,i+1}$ such that $ba_2\notin E$. By~\ref{item:3}, $a_1a_2\in E$. By~\ref{item:6}, $ba_1\notin E$. Then \{$b,v_i,a_1,a_2,v_{i+3},v_{i+2}$\} induces a $P_6$. This proves the first part of the claim. Suppose that $b_1,b_2\in B_{i,i+1}$, $a\in A_{i+3}$. By~\ref{item:4}, $b_1b_2\in E$. Then \{$v_i,b_1,b_2,a$\} induces a diamond.
\end{proof}

\item If $B_{i,i+1}$ is nonempty, then $C_{i-2,i}$ (resp.\ $C_{i+1,i+3}$) is anti-complete to $A_i$ (resp.\ $A_{i+1}$).\label{item:25}
\vspace{-1em}
\begin{proof}
Suppose that $a\in A_i$, $b\in B_{i,i+1}$, $c\in C_{i-2,i}$ such that $ac\in E$. By~\ref{item:6}, $ab\notin E$. By~\ref{item:12}, $bc\notin E$. Then \{$a,c,v_{i-2},v_{i+2},v_{i+1},b$\} induces a $P_6$.
\end{proof}

\item If $B_{i,i+1}$ is nonempty, then $Z$ is anti-complete to $C_{i+1,i+3}\cup C_{i+3,i}$.\label{item:26}
\vspace{-1em}
\begin{proof}
By symmetry suppose that $b\in B_{i,i+1}$, $c\in C_{i+1,i+3}$ and $z\in Z$ such that $cz\in E$. By~\ref{item:12}, $bc\notin E$. By~\ref{item:21}, $bz\notin E$. Then $\{z,c,v_{i+3},v_{i+4},v_i,b\}$ induces a $P_6$.
\end{proof}
\end{enumerate}

\section{The $\chi$-Bound of ($P_6$, diamond)-Free Graphs}\label{sec:bound}

In this section, we prove the following theorem.

\begin{theorem}\label{thm:bound}
Let $G$ be a ($P_6$, diamond)-free graph, then $\chi(G)\leq max\{6,\omega(G)$\}.
\end{theorem}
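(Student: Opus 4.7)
The plan is to reduce via the Strong Perfect Graph Theorem to the imperfect case and then produce an explicit proper colouring of each imperfect $(P_6,\text{diamond})$-free graph using the structural decomposition established in \autoref{sec:imperfect}. If $G$ is perfect then $\chi(G)=\omega(G)\le\max\{6,\omega(G)\}$, so we may assume $G$ is imperfect. Since $G$ is $P_6$-free it contains no hole on at least $7$ vertices, and since $G$ is diamond-free it contains no antihole on at least $7$ vertices (for $n\ge 7$, two vertices at distance $2$ in $C_n$ share at least two common neighbours in $\overline{C_n}$ that induce a diamond with them, e.g.\ $\{1,3,5,6\}$ in $\overline{C_7}$). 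By \autoref{thm:SPGT} the graph $G$ must therefore contain an induced $C_5$, and we fix one such $C_5$ as $Q=v_1v_2v_3v_4v_5v_1$ together with the partition $V(G)=Q\cup A\cup B\cup C\cup F\cup Z$ and the properties \ref{item:1}--\ref{item:26}.

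The core of the proof is to allocate, in each configuration of this partition, a palette of $k:=\max\{6,\omega(G)\}$ colours. The strategy is to colour in the order $Q\to F\to A\to B\to C\to Z$, using the join structure to force reuse. It is convenient to spend five colours on the pentagon $Q$ itself, because each $F_i$ is at most a singleton (\ref{item:13}) whose three forced $Q$-neighbours leave two $Q$-colours free for it. Each $A_i$ is a disjoint union of cliques (\ref{item:1}), and the only $A$-to-$A$ complete joins are between $A_i$ and $A_{i+2}$ (\ref{item:3}), so $A$ naturally partitions into three layers that together require at most $\omega(G)$ colours. The $B$-sets reduce by \ref{item:5} to two opposite cliques $B_{i,i+1}$ and $B_{i+2,i+3}$, each complete to the ``far" $A$-sets by \ref{item:7}, and their colours can therefore be nested inside the palettes already used for $A$ without creating a monochromatic edge. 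The stable sets $C_{i,i+2}$ (\ref{item:8}) and the remote set $Z$, which is anti-complete to $A\cup B$ by \ref{item:21}, are then absorbed into remaining colour classes, with the newly proved properties \ref{item:22}--\ref{item:26} guarding against conflicts between $C$, $Z$, and the already-coloured sets.

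The main obstacle will be the bookkeeping in the ``rich" configurations, where many $A_i$'s are nonempty simultaneously with one or both $B_{i,i+1}$'s having size at least $2$. In such a configuration properties \ref{item:19}, \ref{item:20}, and \ref{item:24} impose sharp restrictions: some $A_j$ or $B_{j',j'+1}$ must vanish, or some $A_j$ must be stable, or $\omega(G)$ must itself be large enough to absorb the join. The plan is to organise the case analysis around (a) which of the $A_i$ are nonempty and whether they are stable, (b) which of the two possible $B_{i,i+1}$ from \ref{item:5} are nonempty and whether they have size at least $2$, and (c) whether any $F_i$ is nonempty, with a clean sub-argument distinguishing $\omega(G)\le 5$ (where we have six colours to spare) from $\omega(G)\ge 6$ (where the structural restrictions should yield $\omega(G)$-colourability directly). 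The subtle colour-sharing between each $B_{i,i+1}$ and the ``opposite" $A$-sets, controlled by \ref{item:7}, \ref{item:23}, and \ref{item:24}, together with the need to avoid new $P_6$'s and diamonds through $C\cup Z$, is the step I expect to consume most of the work.
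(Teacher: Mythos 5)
Your opening reduction is sound: the SPGT argument that an imperfect $(P_6,\text{diamond})$-free graph contains a $C_5$ is correct (your diamond $\{1,3,5,6\}$ in $\overline{C_7}$ checks out and generalises), and setting up the partition $Q\cup A\cup B\cup C\cup F\cup Z$ is exactly the paper's starting point. But from there on what you have written is a plan for a proof, not a proof. Every load-bearing step is deferred: ``the colours can therefore be nested \ldots without creating a monochromatic edge'', ``absorbed into remaining colour classes'', ``the plan is to organise the case analysis around\ldots'', ``the step I expect to consume most of the work''. The entire content of the theorem lives in that deferred case analysis --- the paper's proof of \autoref{thm:imperfect} is a long enumeration of configurations, each with an explicit colour assignment and an explicit $P_6$/diamond verification for every potentially conflicting pair of sets (including matching-based tricks such as sorting the vertices of $B_{2,3}$ by the colours of their neighbours in $B_{4,5}$, and of each component of $A_2$ by the colours of their neighbours in $B_{4,5}$). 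Nothing in your sketch establishes that such assignments exist; for instance, your claim that $A$ ``naturally partitions into three layers that together require at most $\omega(G)$ colours'' is unsubstantiated (the completeness pattern $A_i$--$A_{i+2}$ forms a $5$-cycle on the five sets, and taming it actually relies on \ref{item:19}, \ref{item:20}, \ref{item:24} forcing most $A_i$ to vanish or be stable in each case).

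There is also one concrete missing idea, not just missing labour: you never reduce to graphs with no clique cutsets and no comparable vertices. The paper proves \autoref{thm:imperfect} only for connected imperfect graphs with neither, and derives \autoref{thm:bound} from it by the standard reductions ($\chi$ and $\omega$ are preserved when splitting at a clique cutset or deleting a dominated vertex). This hypothesis is not cosmetic: the colouring of a nontrivial component $K$ of $Z$ (the paper's \autoref{clm:Z colouring}) uses ``no clique cutsets'' to force every vertex of $K$ to have a neighbour in $F$ outside a small clique $Y$, which is what bounds $|K|$ by $2$ and limits $N_C(K)$; without that assumption your statement that $Z$ is ``absorbed into remaining colour classes'' has no justification, since $Z$ is anti-complete to $A\cup B$ by \ref{item:21} and its components can a priori be large and see several colours in $C\cup F$. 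To repair the proposal you would need to (i) add the clique-cutset/comparable-vertex reduction up front, (ii) prove a $Z$-absorption claim along the lines of \autoref{clm:Z colouring}, and (iii) actually carry out and verify the case analysis you outline.
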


This theorem is an improvement of the result of Cameron, Huang and Merkel~\cite{CHM18} that $\chi(G) \leq \omega(G)+3$ for a ($P_6$, diamond)-free graph $G$. To prove \autoref{thm:bound}, we use \autoref{thm:imperfect} below.

\begin{theorem}\label{thm:imperfect}
Let $G$ be a connected imperfect ($P_6$, diamond)-free graph with no clique cutsets or comparable vertices. Then $\chi(G)\leq max\{6,\omega(G)$\}.
\end{theorem}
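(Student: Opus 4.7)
The plan starts by applying \autoref{thm:SPGT}. Since $G$ is $P_6$-free, any induced hole has at most five vertices, so the only odd hole $G$ can contain is a $C_5$. For odd antiholes one checks that for every $k\geq 3$, $\overline{C_{2k+1}}$ contains an induced diamond: labelling $C_{2k+1}$ cyclically as $1,\ldots,2k+1$, the set $\{1,3,5,6\}$ is pairwise nonadjacent in $C_{2k+1}$ except for the edge $56$, and therefore induces a diamond in the complement. Hence the only possible induced odd hole or odd antihole of $G$ is a $C_5$. Fix such a $C_5$ on $Q=\{v_1,\ldots,v_5\}$ and import the partition $V=Q\cup A\cup B\cup C\cup F\cup Z$ together with properties~\ref{item:1}--\ref{item:26} from \autoref{sec:imperfect}.

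I would then perform a case analysis on the nonemptiness pattern of $F$, $B$, and the $A_i$, organised by the cyclic symmetry of the $C_5$. The hypotheses of no clique cutsets and no comparable vertices eliminate many configurations; for example, property~\ref{item:13} limits $F$ to at most five vertices, property~\ref{item:5} confines $B$ to two opposite classes, and property~\ref{item:19} collapses much of the peripheral structure the moment some $A_i$ is nontrivial. In each remaining case I would build an explicit proper colouring. The skeleton is: assign three colours to $Q$ along the cycle; colour the $A_i$'s within the budget $\omega(G)$ after noting by property~\ref{item:3} that $A_i$ and $A_{i+2}$ are complete, so that $\omega(A_i)+\omega(A_{i+2})\leq\omega(G)$; then insert the small sets $B$ and $F$ using the complete/anti-complete constraints in properties~\ref{item:6}--\ref{item:7} and \ref{item:14}--\ref{item:18}; and finally handle $C$ and $Z$, for which properties~\ref{item:21} and~\ref{item:26} largely decouple $Z$ from the rest.

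The main obstacle, as I see it, is the regime where $\omega(G)\leq 5$ and the target $\max\{6,\omega(G)\}=6$ is tight. Here one has at most one colour of slack, so the colourings of the five sectors around the $C_5$ must be chosen very carefully to simultaneously accommodate $B$, $F$, $C$ and $Z$. I expect to leverage \autoref{thm:P6C3} on triangle-free subgraphs of $G$ (for instance, suitable induced subgraphs of $G[Z]$ together with their neighbours in $C\cup F$) to cap their chromatic number at four, and then combine this with the $C_5$-core colouring to stay within six colours. A secondary issue is keeping the case analysis tractable; here the fivefold cyclic symmetry of $Q$ together with properties~\ref{item:22}--\ref{item:26}, which pin down how $C$ interacts with $A$, $B$ and $Z$, are essential for reducing to a short list of essentially distinct cases.
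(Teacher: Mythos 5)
Your opening reduction is correct and matches the paper: $P_6$-freeness rules out holes of length at least seven, diamond-freeness rules out odd antiholes $\overline{C_{2k+1}}$ for $k\geq 3$ (your explicit diamond $\{1,3,5,6\}$ is fine), so \autoref{thm:SPGT} yields an induced $C_5$, and importing the partition $V=Q\cup A\cup B\cup C\cup F\cup Z$ with properties~\ref{item:1}--\ref{item:26} is exactly how the paper proceeds. The top-level organisation you sketch (case split driven mainly by which parts of $B$ are nonempty and which $A_i$ are nonstable, exploiting the cyclic symmetry of $Q$) is also the paper's. However, what you have written is a plan, not a proof. The entire substance of this theorem lies in actually carrying out the case analysis: the paper needs roughly a dozen distinct cases, and in each one an explicit assignment of colours to $Q$, $B$, $A$, $C$, $F$ in turn, together with a verification against properties~\ref{item:1}--\ref{item:26} that no edge is monochromatic; several of these verifications require additional ad hoc arguments (finding a $P_6$ or a diamond to rule out a conflicting edge, e.g.\ the argument that a vertex of $A_2$ coloured $\omega-2$ cannot be adjacent to $C_{4,1}$ in Case~1.1). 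None of that work appears in your proposal, and there is no way to certify that the tight cases ($\omega(G)\leq 5$, target $6$) actually close without doing it. In particular the paper also needs, within Case~3, the extra observation that $G$ may be assumed $S_1$-free (else one reduces to an earlier case) and two further structural claims (\autoref{clm:28}, \autoref{clm:29}) that your outline does not anticipate.

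One concrete element of your plan is likely a dead end: you propose to handle $Z$ by applying \autoref{thm:P6C3} to induced subgraphs of $G[Z]$ together with their neighbours in $C\cup F$. Those subgraphs are not triangle-free in general -- the paper shows that a nontrivial component $K$ of $Z$ with a neighbour $c\in C$ satisfies that $K\cup\{c\}$ is a clique, so $G[Z\cup N(Z)]$ contains triangles whenever $|K|\geq 2$. The paper instead handles $Z$ via a dedicated claim (\autoref{clm:Z colouring}): it cites the result from~\cite{CHM18} that a component of $Z$ with no neighbour in $C$ is $3$-colourable, and for components meeting $C$ it proves they are cliques of size at most two with at most two neighbours in $C$, using the no-clique-cutset hypothesis. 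This is where the hypotheses on clique cutsets and comparable vertices are genuinely used, and your outline does not supply a substitute. So while your strategy is the right one, the proof itself remains to be done.
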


\begin{proof}[Proof of \autoref{thm:bound}]
If $G$ is perfect, then $\chi(G)=\omega(G)$. If $G$ is disconnected and is the union of components \{$H_1,\ldots,H_n$\}, then $\chi(G)$=$max\{\chi(H_1),\ldots,\chi(H_n)\}$ and $\omega(G)$=$max\{\omega(H_1),\ldots,\omega(H_n)\}$. If $G$ is connected and contains a clique cutset $S$, $G-S$ is the disjoint union of subgraphs \{$H_1,\ldots,H_n$\}, then $\chi(G)$=max\{$\chi(G[V(H_1)\cup S],\ldots,\chi(G[V(H_n)\cup S]$\} and $\omega(G)$=max\{$\omega(G[V(H_1)\cup S],\ldots,\omega(G[V(H_n)\cup S]$\}. If $u$ and $v$ are nonadjacent vertices in $G$ such that $N(u)\subseteq N(v)$, then $\chi(G)$=$\chi(G-u)$ and $\omega(G)$=$\omega(G-u)$. Then the theorem follows from \autoref{thm:imperfect}.
\end{proof}

\begin{proof}[Proof of \autoref{thm:imperfect}]
Let $G=(V,E)$. We use the partitioning of $V$ from \autoref{sec:imperfect}. Let $\omega:=max\{6,\omega(G)\}$. In the following we give a $\omega$-colouring for $G$. When we say that two vertex sets \textit{do not conflict}, we mean that no edges between the two sets have two ends of the same colour.

\begin{claim}\label{clm:Z colouring}
If $V\setminus Z$ has a $\omega$-colouring such that one colour of $\{1,\ldots,\omega\}$ is not used in $C\cup F$, and $F$ is coloured with at most 2 colours, then we can extend it to a $\omega$-colouring of $G$.
\end{claim}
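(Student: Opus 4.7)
The plan is to use property~\ref{item:21} together with the fact that $N_Q(z) = \emptyset$ for every $z \in Z$, from which every edge incident to $Z$ goes into $Z \cup C \cup F$; in particular, the colours assigned to $Q \cup A \cup B$ place no constraint on the extension to $Z$. Let $c^*$ denote the colour of $\{1,\ldots,\omega\}$ not used on $C \cup F$, and let $c_1, c_2$ be the (at most two) colours used on $F$; note that $c^* \notin \{c_1, c_2\}$, so $c^*$ is always externally available for each $z \in Z$.

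First I would establish the key structural lemma: if $z, z' \in Z$ with $zz' \in E$ and $x \in C_{i,i+2}$ with $xz \in E$, then $xz' \in E$ as well. Indeed, $N_Q(x) = \{v_i, v_{i+2}\}$ and $N_Q(z) = N_Q(z') = \emptyset$ imply that, in the absence of the edge $xz'$, the set $\{z', z, x, v_i, v_{i-1}, v_{i-2}\}$ induces a $P_6$ (all non-trivial non-edges, namely $xv_{i-1}$, $xv_{i-2}$ and $v_iv_{i-2}$, follow from $x \in C_{i,i+2}$ and the structure of the $C_5$). A direct consequence is that every connected component $K$ of $G[Z]$ has a well-defined common $C$-neighbourhood $N_C(K)$, so the set $S_K$ of colours appearing on $N_C(K)$ is the same for every $z \in K$, and $c^* \notin S_K$. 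I would then use diamond-freeness to further constrain $G[K]$: a triangle $\{z_1, z_2, z_3\} \subseteq K$ together with any $x \in N_C(K)$ induces a $K_4$, and then any vertex adjacent to exactly two of these four vertices would produce a diamond; combined with the lemma above, this sharply restricts the cliques and neighbourhoods inside $K$.

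The extension then proceeds one connected component $K$ of $G[Z]$ at a time, using colours drawn from $\{c^*\} \cup (\{1,\ldots,\omega\} \setminus (S_K \cup \{c_1, c_2\}))$. Externally $c^*$ is always safe, and since $\omega \geq 6$ and $F$ contributes only the two colours $c_1, c_2$, there is enough room to accommodate the internal edges of $K$ once a bound on $\chi(G[K])$ (ideally $2$, but at worst a small constant) is available. Isolated vertices of $G[Z]$ can be coloured $c^*$ with no conflict, and non-trivial components can be 2-coloured by using $c^*$ on one side of a suitable partition and a carefully chosen colour from the free palette on the other.

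The hard part will be exactly this bound on $\chi(G[K])$: I expect to need to rule out (or explicitly describe) odd cycles and large cliques inside $Z$ via $P_6$-freeness together with the shared-$C$-neighbourhood lemma, and also to treat the tight corner case in which $S_K \cup \{c_1, c_2\}$ nearly exhausts $\{1,\ldots,\omega\}$; in that corner case one needs to verify that $\{c_1, c_2\} \subseteq S_K$, so that $c^*$ together with the remaining non-$S_K$ colours still suffices for the internal structure of $K$.
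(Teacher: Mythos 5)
Your opening lemma is exactly the paper's first step: if $c\in C_{i,i+2}$ is adjacent to $z_1\in K$ but not to a neighbour $z_2$ of $z_1$ in $K$, then $\{z_2,z_1,c,v_i,v_{i+4},v_{i+3}\}$ induces a $P_6$, so every $C$-neighbour of a component $K$ of $Z$ is complete to $K$. The handling of trivial components by the reserved colour $c^*$ is also right. But everything after that is left as ``the hard part'', and that is precisely where the proof lives; the gap is real. First, when $K$ is nontrivial and has a neighbour in $C$, the paper does not merely note that triangles in $K$ create $K_4$'s: diamond-freeness together with completeness forces $K\cup N_C(K)$ to be a clique, a short argument shows $|N_C(K)|\leq 2$, and then the \emph{no-clique-cutset} hypothesis is used in an essential way --- each $z\in K$ must have a neighbour in $F$ outside the set $Y$ of vertices complete to $K$, or else $Y\cup(K\setminus\{z\})$ is a clique cutset; a $P_6$ built from three such private $F$-neighbours then forces $|K|=2$. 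Without this bound your colour count does not close: $K$ is a clique that could a priori have up to $\omega-2$ vertices, while only $\omega-4$ colours are guaranteed to avoid both $N_C(K)$ and the two colours on $F$.

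Second, when $K$ is nontrivial and has \emph{no} neighbour in $C$, your plan to $2$-colour $K$ is not justified and is not what is available: the paper invokes Claim~1 of~\cite{CHM18}, which shows such a $K$ is $3$-colourable (again using connectivity, no clique cutsets and no comparable vertices), and this is why the statement reserves the $\omega-2\geq 4$ colours not used on $F$ rather than just two. Finally, your closing worry about $S_K\cup\{c_1,c_2\}$ nearly exhausting the palette dissolves once one knows $|N_C(K)|\leq 2$: at most four colours are ever blocked and $\omega\geq 6$ leaves at least two, so no analysis of whether $\{c_1,c_2\}\subseteq S_K$ is needed. In short, the skeleton is compatible with the paper's proof, but the structural facts that make the extension possible --- $K\cup N_C(K)$ is a clique, $|N_C(K)|\leq 2$, $|K|=2$ via the clique-cutset argument, and $3$-colourability in the $C$-free case --- are all missing.
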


\begin{proof}
Let $K$ be an arbitrary component of $Z$. By~\ref{item:21}, $N_G(K)\subseteq C\cup F$. If $K$ contains exactly one vertex, then colour it with the colour not used in $C\cup F$. Now suppose that $K$ contains at least two vertices.

It is proved in~\cite{CHM18} that if $K$ has no neighbours in $C$, then $K$ is 3-colourable. (See Claim 1 in~\cite{CHM18}. The conditions that $G$ is connected and has no clique cutsets or comparable vertices are necessary.) Since $F$ is coloured with at most 2 colours, we can colour $K$ with the $\omega-2$ colours not used in $F$ if $K$ has no neighbour in $C$. Now suppose that $K$ has a neighbour in $C$.

Let $c\in C_{i,i+2}$ be a neighbour of some vertex $z_1\in K$. If there is a vertex $z_2$ in $K$ such that $z_1z_2\in E$ and $z_2c\notin E$, then $\{z_2,z_1,c,v_i,v_{i+4},v_{i+3}\}$ induces a $P_6$. So $c$ is complete to $K$. Then $\{c\}\cup K$ is a clique since $G$ is diamond-free. If there is more than one vertex in $C$ complete to $K$, then $K\cup N_C(K)$ is a clique  since $K$ contains a $K_2$.

Suppose that $c_1,c_2,c_3$ are three neighbours of $K$ in $C$, then $\{c_1,c_2,c_3\}$ is a clique. By~\ref{item:8}, we may assume that $c_1\in C_{i,i+2}$, $c_2\in C_{i+1,i+3}$, $c_3\in C_{i+2,i+4}\cup C_{i+3,i}$. Then either $\{c_1,c_2,c_3,v_{i+2}\}$ induces a diamond if $c_3\in C_{i+2,i+4}$, or $\{c_1,c_2,c_3,v_i\}$ induces a diamond if $c_3\in C_{i+3,i}$. So $K$ has at most two neighbours in $C$.

Since $K$ is a clique of size at least two, each vertex in $N_F(K)$ is either complete to $K$ or adjacent to exactly one vertex in $K$, and no more than one vertex in $F$ is complete to $K$ by~\ref{item:13}. If $f\in F$ is complete to $K$, then $K\cup N_C(K)\cup\{f\}$ is a clique, and then $N_C(K)$ contains only one vertex by~\ref{item:8} and~\ref{item:18}. Let $Y=\{v\in C\cup F: v$ is complete to $K\}$, then $Y$ is a clique of size at most~2. Then each vertex $z$ in $K$ must have a neighbour in $F\setminus Y$, or else $Y\cup(K\setminus \{z\})$ is a clique cutset.

Now suppose that $K$ contains three vertices $z_1,z_2,z_3$. Let $f_1,f_2,f_3\in F$ such that $z_if_j\in E$ if and only if $i=j$ for $i,j\in\{1,2,3\}$. By symmetry we may assume that $f_1\in F_i$, $f_2\in F_{i+1}$, $f_3\in F_{i+2}\cup F_{i+3}$. Then either $\{f_1,v_{i+3},f_2,z_2,z_3,f_3\}$ induces a $P_6$ if $f_3\in F_{i+2}$, or $\{f_1,v_i,f_3,z_3,z_2,f_2\}$ induces a $P_6$ if $f_3\in F_{i+3}$. So $K$ contains exactly two vertices. Since $K$ has at most 2 neighbours in $C$, and $F$ is coloured with at most 2 colours, there are at least 2 colours left for $K$ since $\omega\geq6$.

Colour each component of $Z$ this way, then $Z$ and $V\setminus Z$ do not conflict.
\end{proof}

Now we will consider all cases and give an $\omega$-colouring for each case. In the following, when we say that we colour a set with a certain colour, we mean that we colour each vertex in the set with that colour. When we say that we colour a clique or the disjoint union of some cliques with a set of colours, we mean that we colour each clique using the smallest colours available unless stated otherwise. For example, let $A_1$ be the disjoint union of three cliques $K_1,K_2,K_3$ of size 1,2,3 in order, when we say that we colour $A_1$ with colours in $1,3,4,5$, we mean that we colour $K_1$ with colour $1$, $K_2$ with colours $1,3$ and $K_3$ with colours $1,3,4$.

By~\ref{item:5}, we may assume that $B=B_{2,3}\cup B_{4,5}$. In the following, we give a proper $\omega$-colouring of $V\setminus Z$ such that one colour of $\{1,\ldots,\omega\}$ is not used in $C\cup F$, and $F$ is coloured with at most 2 colours. Then by \autoref{clm:Z colouring}, $G$ is $\omega$-colourable. We consider three cases: both $B_{2,3}$ and $B_{4,5}$ are nonempty, exactly one of $B_{2,3}$ and $B_{4,5}$ is nonempty, and $B$ is empty.

\case{1} Both $B_{2,3}$ and $B_{4,5}$ are nonempty.

\case{1.1} Both $B_{2,3}$ and $B_{4,5}$ contain at least two vertices.

\begin{figure}[h!]
\centering
\begin{tikzpicture}[scale=2]
\tikzstyle{vertex}=[circle, draw, fill=black, inner sep=1pt, minimum size=5pt]
    \node[vertex, label=${v_1:1}$](1) at (0,2) {};
    \node[vertex, label=${v_2:\omega}$](2) at (-{2*cos(18)},{2*sin(18}) {};
    \node[vertex, label=below:${v_3:\omega-2}$](3) at (-{2*sin(36)},-{2*cos(36}) {};
    \node[vertex, label=below:${v_4:1}$](4) at ({2*sin(36)},-{2*cos(36}) {};
    \node[vertex, label=${v_5:2}$](5) at ({2*cos(18)},{2*sin(18}) {};
    \node[vertex, label=${B_{2,3}:1,2,\ldots,\omega-3,\omega-1}$](6) at (-{3*cos(18)},-{3*sin(18}) {};
    \node[vertex, label=${B_{4,5}:3,\ldots,\omega}$](7) at ({3*cos(18)},-{3*sin(18}) {};
    \node[vertex, label=${A_2:1,\ldots,\omega-1}$](8) at (-{3*cos(18)},{3*sin(18}) {};
    \node[vertex, label=below:${C_{5,2}:3}$](9) at (0,1.2) {};
    \node[vertex, label=right:${C_{1,3}:2}$](10) at (-{1.2*cos(18)},{1.2*sin(18}) {};
    \node[vertex, label=below:${C_{2,4}:\omega-1}$](11) at (-{1.2*sin(36)},-{1.2*cos(36}) {};
    \node[vertex, label=below:${C_{3,5}:\omega}$](12) at ({1.2*sin(36)},-{1.2*cos(36}) {};
    \node[vertex, label=left:${C_{4,1}:\omega-2}$](13) at ({1.2*cos(18)},{1.2*sin(18}) {};

    \Edge(1)(2)
    \Edge(2)(3)
    \Edge(3)(4)
    \Edge(4)(5)
    \Edge(1)(5)
    \Edge(2)(6)
    \Edge(3)(6)
    \Edge(4)(7)
    \Edge(5)(7)
    \Edge(2)(8)
    \Edge(2)(9)
    \Edge(5)(9)
    \Edge(1)(10)
    \Edge(3)(10)
    \Edge(2)(11)
    \Edge(4)(11)
    \Edge(3)(12)
    \Edge(5)(12)
    \Edge(1)(13)
    \Edge(4)(13)

\end{tikzpicture}
\caption{Diagram of the colouring of $Q\cup A\cup B\cup C$ in the proof of Case 1.1.} Edges in $A\cup B\cup C$ are not drawn.\label{fig:diag}
\end{figure}

By~\ref{item:20}, $A_1\cup A_3\cup A_4$ is empty. By~\ref{item:24}, one of $A_2$ and $A_5$ is empty. By symmetry we assume that $A_5$ is empty. In the following we give a proper colouring of $Q$,$B$,$A$,$C$,$F$ in turn. When we give a colouring of a vertex set, we prove that it does not conflict with the vertices we already coloured. So the whole colouring is proper. We give a diagram of the colouring of $Q\cup A\cup B\cup C$ in \autoref{fig:diag}.

$\bullet$ Colour $v_1,\ldots,v_5$ with colours $1,\omega,\omega-2,1,2$ in order.

$\bullet$ By~\ref{item:4}, each of $B_{2,3}$ and $B_{4,5}$ is a clique of size at most $\omega-2$. Colour $B_{4,5}$ with colours in \{$3,\ldots,\omega$\}. Exceptionally, when $B_{4,5}$ contains exactly $\omega-3$ vertices, we use colours $\{3,4,\ldots,\omega-2,\omega\}$ instead of colours $\{3,\ldots,\omega-1\}$.

$\bullet$ Since $G$ is diamond-free, the edges between $B_{2,3}$ and $B_{4,5}$ form a matching. Sort the vertices in $B_{2,3}$ by the colour of their neighbours in $B_{4,5}$ in ascending order, putting the vertices with no neighbours in $B_{4,5}$ at the end of the queue. Then colour the vertices in the queue with colours $1,2,\ldots,\omega-3,\omega-1$ in order. This ensures that for each edge between the two sets, its end in $B_{2,3}$ has a smaller colour than the end in $B_{4,5}$. So $B_{2,3}$ and $B_{4,5}$ do not conflict.

$\bullet$ Let $K$ be an arbitrary component of $A_2$. By~\ref{item:1}, $K$ is a clique of size at most $\omega-1$. Then the edges between $B_{4,5}$ and $K$ form a matching since $G$ is diamond-free. Sort the vertices in $K$ by the colour of their neighbours in $B_{4,5}$ in ascending order, putting the vertices with no neighbours at the end of the queue. Then colour the vertices in the queue with colours $1,\ldots,\omega-1$ in order. Colour each component of $A_2$ this way. Then $A_2$ and $B_{4,5}$ do not conflict. By~\ref{item:6}, $A_2$ and $B_{2,3}$ do not conflict.

$\bullet$ By~\ref{item:8}, each $C_{i,i+2}$ can be coloured with one colour. Colour $C_{1,3},C_{2,4},C_{3,5},C_{4,1},C_{5,2}$ with colours $2,\omega-1,\omega,\omega-2,3$ in order. By~\ref{item:12}, $B$ and $C$ do not conflict. By~\ref{item:11}, $C_{1,3}$ is anti-complete to each nontrivial component of $A_2$. (Note that each component of $A_2$ of size $j$ is coloured with colours $1,\ldots,j$.) By~\ref{item:25}, $C_{5.2}$ is anti-complete to $A_2$.

Suppose that $a\in A_2$ with colour $\omega-1$ is adjacent to $c\in C_{2,4}$. Then $a$ belongs to a component $K$ in $A_2$ of size $\omega-1$. By~\ref{item:9}, $K\cup\{v_2,c\}$ is a clique of size $\omega+1$.

Suppose that $a_1\in A_2$ with colour $\omega-2$ is adjacent to $c\in C_{4,1}$. Let $b$ be the vertex of colour $3$ in $B_{4,5}$. By the way we colour $A_2$, $a_1b\notin E$, and there is a vertex $a_2\in A_2$ adjacent to $a_1$, such that $a_2b\notin E$. (Note that every vertex in $A_2$ which is adjacent to $b$ is coloured with colour $1$.) By~\ref{item:10}, $a_2c\notin E$. Then $\{a_2,a_1,c,v_1,v_5,b\}$ induces a $P_6$. So $C$ and $A$ do not conflict.

$\bullet$ Colour $F_1,\ldots,F_5$ with colours $\omega-1,\omega-1,\omega,\omega-1,\omega-1$ in order. By~\ref{item:13}, this is a proper colouring of $F$. By~\ref{item:18}, $F$ and $C$ do not conflict. By~\ref{item:17}, $B_{2,3}$ and $F\setminus F_5$ do not conflict. Note that if $f\in F_5$ and $b\in B_{2,3}$ have the same colour $\omega-1$, then $B_{2,3}\cup\{f,v_2,v_3\}$ is a clique of size $\omega+1$ by~\ref{item:17}. So $F$ and $B_{2,3}$ do not conflict. Similarly, $F$ and $B_{4,5}$ do not conflict.

By~\ref{item:14}, $F_4$ and $F_5$ do not conflict with $A_2$. By~\ref{item:15}, $F_2$ and vertices in $A_2$ of colour $\omega-1$ do not conflict, or else there is a clique of size $\omega+1$ in $F_2\cup A_2\cup\{v_2\}$. Suppose that $a_1\in A_2$ with colour $\omega-1$ is adjacent to $f\in F_1$. Let $b$ be the vertex of colour 3 in $B_{4,5}$. By the way we colour $A_2$, we have $a_1b\notin E$, and there is a vertex $a_2\in A_2$ adjacent to $a_1$ such that  $a_2b\notin E$. By~\ref{item:16}, $a_2f\notin E$. Then $\{a_2,a_1,f,v_1,v_5,b\}$ induces a $P_6$. So $A$ and $F$ do not conflict. Now we have a proper colouring of $V\setminus Z$, so that the colour 1 is not used in $C\cup F$, and $F$ is coloured with $\omega-1$ and $\omega$.

Note that in the above proof, the fact that both $B_{2,3}$ and $B_{4,5}$ contain at least two vertices is only used to ensure that $A=A_2$. To prove that the colouring is indeed a proper colouring, we do not require $B_{2,3}$ or $B_{4,5}$ to contain at least two vertices. We state this so that some other cases can reduce to this case.

\case{1.2} One of $B_{2,3}$ and $B_{4,5}$ contains at least two vertices while the other contains exactly one vertex.

By symmetry we assume that $B_{2,3}$ contains at least two vertices. By~\ref{item:20}, $A_1$ and $A_4$ are empty. By~\ref{item:24}, one of $A_2\cup A_3$ and $A_5$ is empty.

\case{1.2.1} $A_5$ is empty.

By~\ref{item:19}, $A_3$ is stable. Based on the colouring of Case 1.1, colour $B_{4,5}$ with colour 3 and $A_3$ with colour $1$. By~\ref{item:2} and~\ref{item:6}, $A_3$ do not conflict with $A_2$ or $B_{2,3}$. This gives a proper colouring.

\case{1.2.2} $A_5$ is nonempty.

By symmetry it is equivalent to the case that $B_{4,5}$ contains at least two vertices while $B_{2,3}$ contains exactly one vertex, and $A_2$ is nonempty while $A_4\cup A_5$ is empty. Then it reduces to Case 1.1.

\case{1.3} Both $B_{2,3}$ and $B_{4,5}$ contain exactly one vertex.

\case{1.3.1} One of $A_2$ and $A_5$ is nonstable.

By symmetry we assume that $A_2$ is nonstable. By~\ref{item:19}, $A_1$ and $A_3$ are stable while $A_4$ and $A_5$ are empty. If $A_1$ is empty, then it reduces to Case 1.2.1. Now suppose that $A_1$ is nonempty.

$\bullet$ Colour $v_1,\ldots,v_5$ with colours $2,\omega,1,3,1$ in order.

$\bullet$ Note that $b\in B_{4,5}$ has at most one neighbour in each component of $A_2$. Colour each component of $A_2$ with colours in $\{1,\ldots,\omega-1\}$ so that the neighbour of $b$ is coloured with colour 1, if it exists. Colour $A_1$ with colour $1$ and $A_3$ with colour $2$.

$\bullet$ Colour $B_{2,3}$ with colour $2$ and $B_{4,5}$ with colour $\omega-2$.

$\bullet$ Colour $C_{1,3},C_{2,4},C_{3,5},C_{4,1},C_{5,2}$ with colours $3,\omega-1,\omega-2,\omega,\omega-1$ in order. By~\ref{item:23}, $C_{2,4}$ and $C_{5,2}$ do not conflict.

Suppose that $a_1\in A_2$ with colour $\omega-2$ is adjacent to $c\in C_{3,5}$. Let $b\in B_{4,5}$. Then $a_1b\notin E$, and there is a vertex $a_2\in A_2$ adjacent to $a_1$ such that $a_2c,a_2b\notin E$. Then $\{a_2,a_1,c,v_3,v_4,b\}$ induces a $P_6$. So $A$ and $C_{3,5}$ do not conflict. The reason why $C_{1,3}\cup C_{2,4}\cup C_{5,2}$ does not conflict with $A$ is similar to Case 1.1. So $C$ and $A$ do not conflict.

$\bullet$ Colour $F_1,\ldots,F_5$ with colours $\omega-1,\omega,\omega-1,\omega-1,\omega$ in order. The reason why $F$ does not conflict with $A$ is similar to Case 1.1.

\case{1.3.2} Both $A_2$ and $A_5$ are stable.

By~\ref{item:19}, $A_1$, $A_3$ and $A_4$ are stable.

\case{1.3.2.1} $A_1$ is nonempty.

$\bullet$ Colour $v_1,\ldots,v_5$ with colours $4,2,1,4,3$ in order.

$\bullet$ Colour $A_1,\ldots,A_5$ with colours $1,1,3,2,2$ in order.

$\bullet$ Colour $B_{2,3}$ with colour $3$ and $B_{4,5}$ with colour $2$.

$\bullet$ Colour $C_{1,3},C_{2,4},C_{3,5},C_{4,1},C_{5,2}$ with colours $5,3,4,6,4$ in order. By (22), $C_{2,4}$ and $A_3$ do not conflict. By (23), $C_{3,5}$ and $C_{5,2}$ do not conflict.

$\bullet$ Colour $F_1,\ldots,F_5$ with colours $6,6,6,6,5$ in order.

\case{1.3.2.2} $A_1$ is empty while $A_3\cup A_4$ is nonempty.

By symmetry we assume that $A_3$ is nonempty.

$\bullet$ Colour $v_1,\ldots,v_5$ with colours $3,4,1,3,2$ in order.

$\bullet$ Colour $A_2,A_3,A_4,A_5$ with colours $2,2,1,1$ in order.

$\bullet$ Colour $B_{2,3}$ with colour $2$ and $B_{4,5}$ with colour $1$.

$\bullet$ Colour $C_{1,3},C_{2,4},C_{3,5},C_{4,1},C_{5,2}$ with colours $4,5,6,5,3$ in order. By~\ref{item:23}, $C_{2,4}$ and $C_{4,1}$ do not conflict.

$\bullet$ Colour $F_1,\ldots,F_5$ with colours $6,6,6,5,6$ in order.

\case{1.3.2.3} $A_1\cup A_3\cup A_4$ is empty.

If one of $A_2$ and $A_5$ is empty, then it reduces to Case 1.1. Now suppose that both $A_2$ and $A_5$ are nonempty.

Now we prove that each vertex in $C_{2,4}\cup C_{3,5}$ is complete to one of $A_2$ and $A_5$, and anti-complete to the other. By symmetry, it suffices to show that $C_{2,4}$ is complete to one of $A_2$ and $A_5$ and anti-complete to the other. Let $a_1\in A_2$, $a_2\in A_5$, $b\in B_{2,3}$, $c\in C_{2,4}$. By~\ref{item:3}, $a_1a_2\in E$. By~\ref{item:6}, $ba_1\notin E$. By~\ref{item:12}, $bc\notin E$. By~\ref{item:24}, $ba_2\in E$. If $a_1c,a_2c\in E$, then \{$a_1,a_2,c,v_2$\} induces a diamond. If $a_1c,a_2c\notin E$, then \{$c,v_4,v_3,b,a_2,a_1$\} induces a $P_6$.

$\bullet$ Colour $v_1,\ldots,v_5$ with colours $1,3,5,4,3$ in order.

$\bullet$ Colour $A_2$ with colour $1$ and $A_5$ with colour $2$.

$\bullet$ Colour $B_{2,3}$ with colour $1$ and $B_{4,5}$ with colour $2$.

$\bullet$ Colour $C_{1,3},C_{4,1},C_{5,2}$ with colours $6,3,5$ in order. Colour the vertices in $C_{2,4}\cup C_{3,5}$ which is adjacent to $A_2$ with colour $2$, and those adjacent to $A_5$ with colour $1$. Suppose that $c_1\in C_{2,4}$ and $c_2\in C_{3,5}$ are adjacent and have the same colour, say $2$, then both of them are adjacent to some vertex $a\in A_2$, and so \{$a,c_1,c_2,v_2$\} induces a diamond. So $C_{2,4}$ and $C_{3,5}$ do not conflict.

$\bullet$ Colour $F_1,\ldots,F_5$ with colours $6,5,6,6,6$ in order.

\case{2} Exactly one of $B_{2,3}$ and $B_{4,5}$ is nonempty.

By symmetry we assume that $B_{2,3}$ is nonempty. By~\ref{item:19}, \{$A_1,\ldots,A_5$\} contains at most two nonstable sets. If there are two nonstable sets, they must be $A_2$ and $A_3$; if there is one nonstable set, it can be $A_2$, $A_3$ or $A_5$.

\begin{claim}\label{clm:A5C52}
Suppose that $B_{2,3}$ is nonempty. If  $a\in A_5$ and $c\in C_{3,5}\cup C_{5,2}$ are adjacent, then $G$ is $\omega$-colourable.
\end{claim}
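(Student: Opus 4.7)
My plan is to exhibit a second induced $5$-cycle in $G$ that, when used in place of $Q$, puts the graph into Case~1 of the present proof, and then to invoke the colouring already built there.

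Using the reflection $v_1\leftrightarrow v_4$, $v_2\leftrightarrow v_3$, $v_5\mapsto v_5$, which preserves $A_5$ and $B_{2,3}$ setwise but interchanges $C_{3,5}$ and $C_{5,2}$, I would first reduce to the case $c\in C_{3,5}$. I would then take $Q':=(v_1,v_2,v_3,c,v_5)$ in cyclic order. The consecutive pairs of $Q'$ form edges thanks to $Q$ being a cycle together with $c\in C_{3,5}$, while the five potential chords are all non-edges: three are ruled out because $Q$ is induced, and the remaining two ($v_1c$ and $v_2c$) because $N_Q(c)=\{v_3,v_5\}$. Hence $Q'$ is an induced $C_5$ in $G$.

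Next I would recompute how $b\in B_{2,3}$ and $a$ sit relative to $Q'$. Property~\ref{item:12} applied to $C_{3,5}$ and $B_{2,3}$ yields $bc\notin E$, so $N_{Q'}(b)=\{v_2,v_3\}$ and $b$ lies in the copy of $B_{2,3}$ defined from $Q'$. The hypothesis $ac\in E$, combined with $a\in A_5$ (which gives $N_Q(a)=\{v_5\}$), yields $N_{Q'}(a)=\{c,v_5\}$; since $c$ and $v_5$ are consecutive in $Q'$, this puts $a$ in the copy of $B_{4,5}$ defined from $Q'$. Both of the new $B$-sets are therefore nonempty.

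Consequently $G$, viewed through the partition induced by $Q'$, satisfies the hypothesis of Case~1 in the proof of \autoref{thm:imperfect}, and the appropriate sub-case there produces an $\omega$-colouring of $G$. The remark at the end of Case~1.1 already confirms that its construction remains a proper $\omega$-colouring when the two $B$-sets each contain only a single vertex, which is all that is needed here. I expect the only real bookkeeping obstacle to be verifying that $Q'$ is genuinely induced and that $a,b$ are classified correctly with respect to $Q'$; once that is settled, the colouring work is done entirely by Case~1.
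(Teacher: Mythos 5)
Your proposal is correct and matches the paper's own argument: both replace one vertex of $Q$ by $c$ to obtain a new induced $C_5$ with respect to which $b\in B_{2,3}$ and $a$ land in the two opposite $B$-sets, and then reduce to Case~1 (the paper takes $c\in C_{5,2}$ and $Q'=\{c,v_2,v_3,v_4,v_5\}$, you take the mirror-image choice $c\in C_{3,5}$ and $Q'=\{v_1,v_2,v_3,c,v_5\}$). The verification that $Q'$ is induced and that $N_{Q'}(b)=\{v_2,v_3\}$, $N_{Q'}(a)=\{c,v_5\}$ via~\ref{item:12} is exactly the paper's bookkeeping.
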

\begin{proof}
By symmetry suppose that $c\in C_{5,2}$. Let $Q'=\{c,v_2,v_3,v_4,v_5\}$, then $Q'$ induces a $C_5$. By~\ref{item:12}, $c$ is anti-complete to $B_{2,3}$. Let $b\in B_{2,3}$, then $N_{Q'}(b)=\{v_2,v_3\}$ and $N_{Q'}(a)=\{c,v_5\}$. Then it reduces to Case 1.
\end{proof}

So in Case 2 we assume that $A_5$ and $C_{3,5}\cup C_{5,2}$ are anti-complete.

\case{2.1} Both $A_2$ and $A_3$ are nonstable.

By~\ref{item:19}, $A_1$, $A_4$ and $A_5$ are empty.

$\bullet$ Colour $v_1,\ldots,v_5$ with colours $1,\omega,1,2,3$ in order.

$\bullet$ Colour $A_3$ with colours in \{$2,\ldots,\omega$\}. For each component of $A_2$, colour it with colours in $\{1,\ldots,\omega-1\}$ using the largest colours available.

$\bullet$ Colour $C_{3,5},C_{4,1},C_{5,2}$ with colours $\omega,3,1$ in order. For each vertex in $C_{1,3}$, colour it with colour 2 if it has a neighbour in $C_{3,5}$, otherwise colour it with colour $\omega$. For each vertex in $C_{2,4}$, colour it with colour $\omega-1$ if it has a neighbour in $C_{5,2}$, otherwise colour it with colour 1.

Suppose that $a\in A_2$ and $c\in C_{2,4}$ are adjacent and have the same colour 1. Then $a$ is in a component $K$ in $A_2$ of size $\omega-1$. Then $K\cup\{v_2,c\}$ is a clique of size $\omega+1$ by~\ref{item:9}. Suppose that $a\in A_2$ and $c_1\in C_{2,4}$ are adjacent and have the same colour $\omega-1$. Then $c_1$ has a neighbour $c_2\in C_{5,2}$. By~\ref{item:25}, $ac_2\notin E$. Then $\{a,v_2,c_1,c_2\}$ induces a diamond. So $A_2$ and $C_{2,4}$ do not conflict. Similarly, $A_3$ and $C_{1,3}$ do not conflict.

Now we prove that $C_{4,1}$ is anti-complete to every nontrivial component of $A_2$ and $A_3$. By symmetry suppose that $a_1,a_2\in A_2$ such that $a_1a_2\in E$, and $a_1$ has a neighbour $c\in C_{4,1}$. By~\ref{item:10}, $a_2c\notin E$, and there is a vertex $a_3\in A_3$ such that $a_3c\notin E$ since $A_3$ is nonstable. Then $\{a_2,a_1,c,v_4,v_3,a_3\}$ induces a $P_6$. So $A$ and $C$ do not conflict.

$\bullet$ Now we prove that there is at most one vertex in $B_{2,3}$ which has a neighbour in $C_{4,1}$. Since every vertex in $C_{4,1}$ has at most one neighbour in $B_{2.3}$ by~\ref{item:12}, we may assume that $b_1,b_2\in B_{2,3}$, $c_1,c_2\in C_{4,1}$ such that $b_1c_1,b_2c_2\in E$. Let $a$ be a vertex in a nontrivial component of $A_3$. Then $ac_1,ac_2\notin E$, and so $\{c_2,v_1,c_1,b_1,v_3,a\}$ induces a $P_6$.

Colour $B_{2,3}$ with colours in $\{2,\ldots,\omega-1\}$ so that the vertex which has a neighbour in $C_{4,1}$ is coloured with colour 2, if it exists. So $B$ and $C$ do not conflict.

$\bullet$ Colour $F_1,\ldots,F_5$ with colours $\omega,\omega-1,\omega,\omega-1,\omega-1$ in order.

\case{2.2} Exactly one of $A_2$ and $A_3$ is nonstable.

By symmetry we assume that $A_2$ is nonstable. By~\ref{item:19}, $A_4$ and $A_5$ is empty.

$\bullet$ Colour $v_1,\ldots,v_5$ with colours $1,\omega,2,1,2$ in order.

$\bullet$ Colour $A_2$ with colours in $\{1,\ldots,\omega-1\}$, $A_1$ with colour 2 and $A_3$ with colour 1.

$\bullet$ Colour $B_{2,3}$ with colours in $\{1,3,4,\ldots,\omega-1\}$.

$\bullet$ Colour $C_{1,3},C_{2,4},C_{3,5}$ with colours $3,\omega-1,\omega$ in order. If $A_1$ is empty, then colour $C_{4,1}$ with colour 2 and $C_{5,2}$ with colour $\omega-2$, otherwise colour $C_{4,1}$ with colour $\omega-2$ and $C_{5,2}$ with colour $\omega-1$.

By~\ref{item:23}, $C_{2,4}$ and $C_{5,2}$ do not conflict. If $A_1$ is nonempty, then $B_{2,3}$ contains only one vertex with colour 1 by~\ref{item:20}. So $B_{2,3}$ and $C_{4,1}$ do not conflict. Suppose that $a_1\in A_2$ and $c\in C_{4,1}$ are adjacent and have the same colour 2 or $\omega-2$. Then there is a vertex $a_2\in A_2$ such that $a_1a_2\in E$ and $a_2c\notin E$. Let $b\in B_{2,3}$. Then $\{a_2,v_2,b,c,v_4,v_5\}$ or $\{a_2,a_1,c,v_4,v_3,b\}$ induces a $P_6$, depending on whether $b$ and $c$ are adjacent. So $A_2$ and $C_{4,1}$ do not conflict.

$\bullet$ Colour $F_1,\ldots,F_5$ with colours $\omega,\omega-1,\omega,\omega-1,\omega-1$ in order.

\case{2.3} $A_5$ is nonstable.

By~\ref{item:19}, $A_2$ and $A_3$ are empty.

$\bullet$ Colour $v_1,\ldots,v_5$ with colours $\omega-2,1,2,1,\omega$ in order.

$\bullet$ Colour $B_{2,3}$ with colours in $\{3,\ldots,\omega\}$. Exceptionally, when $B_{2,3}$ contains exactly $\omega-3$ vertices, we use colours $\{3,4,\ldots,\omega-2,\omega\}$ instead of colours $\{3,\ldots,\omega-1\}$.

$\bullet$ For each component of $A_5$, sort its vertices by the colours of their neighbours in $B_{2,3}$ in ascending order, putting the vertices with no neighbours in $B_{2,3}$ at the end of the queue. Then colour the vertices in the queue with colours $1,\ldots,\omega-1$ in order. Colour $A_1$ with colour 1 and $A_4$ with colour 2.

$\bullet$ Colour $C_{1,3},C_{2,4},C_{5,2}$ with colours $3,\omega,\omega-1$ in order. If $A_4$ is empty, then colour $C_{3,5}$ with colour $\omega-2$ and $C_{4,1}$ with colour 2, otherwise colour $C_{3,5}$ with colour 3 and $C_{4,1}$ with colour $\omega-2$.

By~\ref{item:23}, $C_{1,3}$ and $C_{3,5}$ do not conflict. If $A_4$ is nonempty, then $B_{2,3}$ contains only one vertex with colour 3 by~\ref{item:20}. So $B_{2,3}$ and $C_{4,1}$ do not conflict. Note that we assume that $A_5$ and $C_{3,5}\cup C_{5,2}$ are anti-complete. The reason why $A_5$ and $C_{1,3}$ do not conflict is similar to the reason why $A_2$ and $C_{4,1}$ do not conflict in Case 1.1. So $A_5$ and $C$ do not conflict.

$\bullet$ Colour $F_1,\ldots,F_5$ with colours $\omega,\omega-1,\omega-1,\omega,\omega-1$ in order.

\case{2.4} None of $A_1,\ldots,A_5$ is nonstable.

$\bullet$ Colour $v_1,\ldots,v_5$ with colours $1,3,2,1,3$ in order.

$\bullet$ Colour $A_1,\ldots,A_5$ with colours $2,1,1,4,2$ in order.

$\bullet$ Colour $B_{2,3}$ with colours in $\{1,4,5,\ldots,\omega\}$. Note that if $A_4$ is nonempty, then $B_{2,3}$ contains only one vertex with colour 1 by~\ref{item:20}. So $A_4$ and $B_{2,3}$ do not conflict.

$\bullet$ Colour $C_{2,4},C_{3,5},C_{4,1}$ with colours $5,6,3$ in order. If $A_1$ is empty, then colour $C_{5,2}$ with colour 2, otherwise colour $C_{5,2}$ with colour 5. If $A_4$ is empty, then colour $C_{1,3}$ with colour 4, otherwise colour $C_{1,3}$ with colour 6.

Note that we assume that $A_5$ and $C_{5,2}$ are anti-complete. By~\ref{item:23}, $C_{1,3}$ and $C_{3,5}$, $C_{2,4}$ and $C_{5,2}$ do not conflict.

$\bullet$ Colour $F_1,\ldots,F_5$ with colours $\omega,5,\omega,5,\omega$ in order.

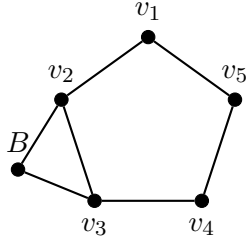
\begin{figure}[h!]
\centering
\begin{tikzpicture}[scale=0.6]
\tikzstyle{vertex}=[circle, draw, fill=black, inner sep=1pt, minimum size=5pt]
    \node[vertex, label=${v_1}$](1) at (0,2) {};
    \node[vertex, label=${v_2}$](2) at (-{2*cos(18)},{2*sin(18}) {};
    \node[vertex, label=below:${v_3}$](3) at (-{2*sin(36)},-{2*cos(36}) {};
    \node[vertex, label=below:${v_4}$](4) at ({2*sin(36)},-{2*cos(36}) {};
    \node[vertex, label=${v_5}$](5) at ({2*cos(18)},{2*sin(18}) {};
    \node[vertex, label=$B$](6) at (-{3*cos(18)},-{3*sin(18}) {};

    \Edge(1)(2)
    \Edge(2)(3)
    \Edge(3)(4)
    \Edge(4)(5)
    \Edge(1)(5)
    \Edge(2)(6)
    \Edge(3)(6)

\end{tikzpicture}
\caption{The graph series $S_n$. $B$ is a clique of size $n$ such that $B$ is complete to $\{v_2,v_3\}$ and anti-complete to $\{v_1,v_4,v_5\}$.}\label{fig:Sn}
\end{figure}

\case{3} $B$ is empty.

Let $S_1$ be the graph with $6$ vertices obtained from $C_5$ by adding a vertex adjacent to exactly two adjacent vertices of the $C_5$ (see \autoref{fig:Sn}). If $G$ contains an $S_1$, then it reduces to Case 1 or Case 2. Now assume that $G$ is $S_1$-free. We have the following two results:

\begin{claim}\label{clm:28}
If $G$ is $S_1$-free, then $C_{i,i+2}$ is anti-complete to $A_i\cup A_{i+2}$.
\end{claim}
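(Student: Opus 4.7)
The plan is to argue by contradiction and produce an induced copy of $S_1$. Suppose some $c\in C_{i,i+2}$ has a neighbour $a\in A_i\cup A_{i+2}$. Because $C_{i,i+2}$ is invariant under the reflection of $Q$ about $v_{i+1}$ (which swaps $v_i\leftrightarrow v_{i+2}$, $v_{i-1}\leftrightarrow v_{i+3}$, and hence $A_i\leftrightarrow A_{i+2}$), we may assume $a\in A_i$.

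The key move is to replace $v_{i+1}$ by $c$ in the original 5-cycle $Q$ and then hang $a$ off the edge $v_ic$. First I would verify that $Q':=\{v_{i-1},v_i,c,v_{i+2},v_{i+3}\}$ (in this cyclic order) induces a $C_5$: the edges $v_{i-1}v_i$, $v_{i+2}v_{i+3}$, $v_{i+3}v_{i-1}$ come from $Q$, and $v_ic$, $cv_{i+2}$ come from $c\in C_{i,i+2}$; the required non-edges $v_iv_{i+2}$, $v_{i-1}v_{i+2}$, $v_{i-1}v_{i+3}$ hold because $Q$ is an induced $C_5$, and $cv_{i-1}$, $cv_{i+3}$ are non-edges by definition of $C_{i,i+2}$ (its only $Q$-neighbours are $v_i$ and $v_{i+2}$).

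Next I would check $a$'s attachment to $Q'$. The edge $av_i$ exists because $a\in A_i$, and $ac$ exists by assumption, so $a$ is adjacent to the two consecutive vertices $v_i$ and $c$ of $Q'$. Moreover $a\in A_i$ means $v_i$ is the unique neighbour of $a$ in $Q$, so $av_{i-1}$, $av_{i+2}$, $av_{i+3}$ are all non-edges. Therefore $Q'\cup\{a\}$ induces exactly an $S_1$, contradicting the assumption that $G$ is $S_1$-free. The symmetric case $a\in A_{i+2}$ is handled by the swap noted above, completing the proof.

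There is no real obstacle here: the whole argument is a matter of spotting the correct substitute 5-cycle and then reading the required edges and non-edges directly from the definitions of $Q$, $A_i$, and $C_{i,i+2}$.
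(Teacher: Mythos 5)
Your proof is correct and is essentially the paper's own argument: the paper also exhibits the induced $S_1$ on $\{v_i,c,v_{i+2},v_{i+3},v_{i+4},a\}$ (your $Q'\cup\{a\}$, since $v_{i-1}=v_{i+4}$) after reducing to $a\in A_i$ by symmetry. The only difference is that you spell out the edge and non-edge verifications explicitly.
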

\begin{proof}
By symmetry, it suffices to show that $C_{i,i+2}$ is anti-complete to $A_i$. If $a\in A_i$ and $c\in C_{i,i+2}$ are adjacent, then \{$v_i,c,v_{i+2},v_{i+3},v_{i+4},a$\} induces an $S_1$.
\end{proof}

\begin{claim}\label{clm:29}
If $G$ is $S_1$-free and $A_{i+1}$, $A_{i+3}$ and $A_{i+4}$ are all nonempty, then each vertex in $C_{i,i+2}$ is complete to one of $A_{i+1}$ and $A_{i+3}\cup A_{i+4}$, and anti-complete to the other.
\end{claim}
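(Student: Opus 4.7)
The plan is to first show that every $c\in C_{i,i+2}$ is either complete to $A_{i+1}$ or anti-complete to $A_{i+1}$, and then, in each case, to pin down the behaviour of $c$ on $A_{i+3}\cup A_{i+4}$. Throughout I will use that $A_{i+1}$ is complete to both $A_{i+3}$ and $A_{i+4}$ by~\ref{item:3}, that $A_{i+3}$ and $A_{i+4}$ are anti-complete by~\ref{item:2}, and that any neighbour of $c$ in $A_{i+1}$ must lie in a trivial component of $A_{i+1}$ by~\ref{item:11}. Fix arbitrary $a_1\in A_{i+1}$, $a_3\in A_{i+3}$, and $a_4\in A_{i+4}$; these sets are nonempty by hypothesis.

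For the first step, suppose for contradiction that $a,a'\in A_{i+1}$ with $ca\in E$ and $ca'\notin E$. By~\ref{item:11} the component of $A_{i+1}$ containing $a$ is trivial, so $aa'\notin E$. Then for any $a_3\in A_{i+3}$, the vertex sequence $a',a_3,a,c,v_i,v_{i+4}$ forms an induced $P_6$ unless $ca_3\in E$; analogously, for any $a_4\in A_{i+4}$, the sequence $a',a_4,a,c,v_{i+2},v_{i+3}$ forms an induced $P_6$ unless $ca_4\in E$. Hence $c$ is complete to $A_{i+3}\cup A_{i+4}$. But then $\{a,a_3,a_4,c\}$ induces a diamond, because $a_3a_4\notin E$ by~\ref{item:2} while all other pairs are edges, contradicting diamond-freeness.

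If $c$ is complete to $A_{i+1}$ and moreover $ca_3\in E$ for some $a_3\in A_{i+3}$, then $\{a_1,a_3,a_4,c\}$ would be a diamond unless $ca_4\notin E$ for every $a_4\in A_{i+4}$; taking any such $a_4$, the set $\{a_4,a_1,c,v_i,v_{i+4}\}$ induces a $C_5$ in which $a_3$'s neighbourhood is exactly the consecutive pair $\{a_1,c\}$, so these six vertices form an induced $S_1$, contradicting $S_1$-freeness. A symmetric argument using the $C_5$ on $\{a_3,a_1,c,v_{i+2},v_{i+3}\}$ rules out $ca_4\in E$, so $c$ is anti-complete to $A_{i+3}\cup A_{i+4}$. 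In the remaining case, $c$ is anti-complete to $A_{i+1}$; here, $ca_3\notin E$ would yield the induced $P_6$ $v_{i+3},a_3,a_1,v_{i+1},v_i,c$, and $ca_4\notin E$ would yield the induced $P_6$ $c,v_{i+2},v_{i+3},v_{i+4},a_4,a_1$, so $c$ is complete to $A_{i+3}\cup A_{i+4}$. The main obstacle is not conceptual but bookkeeping: each claim that a four-, five-, or six-vertex set is an induced diamond, $C_5$, $S_1$, or $P_6$ rests on checking every pair using the definitions of $A_j$ and $C_{i,i+2}$ together with properties~\ref{item:1}--\ref{item:3},~\ref{item:10}, and~\ref{item:11}, and missing a single chord such as $aa'$, $a_3a_1$, or an adjacency among the $v_j$ would collapse the argument, so I would present each adjacency check explicitly in the final write-up.
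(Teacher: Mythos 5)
Your proof is correct and takes essentially the same approach as the paper: a case analysis of the adjacencies of a vertex $c\in C_{i,i+2}$ to representatives of $A_{i+1}$, $A_{i+3}$ and $A_{i+4}$, ruling out the forbidden patterns with a diamond, an $S_1$, and induced $P_6$'s, exactly as in the paper's proof of \autoref{clm:29}. The only difference is organizational --- you first establish that $c$ is complete or anti-complete to $A_{i+1}$ and then treat the two cases, whereas the paper runs a single four-case analysis on a triple $(a_1,a_2,a_3)$ --- and your adjacency checks are all sound.
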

\begin{proof}
Let $a_1\in A_{i+1}$, $a_2\in A_{i+3}$, $a_3\in A_{i+4}$, $c\in C_{i+2}$. By~\ref{item:2}, $a_2a_3\notin E$. By~\ref{item:3}, $a_1a_2,a_1a_3\in E$. If $a_1c,a_2c,a_3c\in E$, then \{$a_1,a_2,a_3,c$\} induces a diamond. If $a_1c,a_2c,a_3c\notin E$, then \{$c,v_i,v_{i+4},a_3,a_1,a_2$\} induces a $P_6$. If $a_1c\notin E$, and exactly one of $a_2$ and $a_3$, say $a_3$, is adjacent to $c$, then \{$v_i,c,a_3,a_1,a_2,v_{i+3}$\} induces a $P_6$. If $a_1c\in E$, and exactly one of $a_2$ and $a_3$, say $a_2$, is adjacent to $c$ then \{$a_3,a_1,a_2,v_{i+3},v_{i+4},c$\} induces an $S_1$.
\end{proof}

By~\ref{item:19}, \{$A_1,\ldots,A_5$\} contains at most two nonstable sets, and $A_i$ and $A_j$ are nonstable only if $|i-j|=1$.

\case{3.1} \{$A_1,\ldots,A_5$\} contains two nonstable sets.

By symmetry, we assume that $A_2$ and $A_3$ are nonstable. By~\ref{item:19}, $A_1\cup A_4\cup A_5$ is empty.

$\bullet$ Colour $v_1,\ldots,v_5$ with colours $1,\omega,1,2,3,$ in order.

$\bullet$ Colour $A_2$ with colours in $\{1,\ldots,\omega-1\}$. By~\ref{item:16}, each component of $A_3$ contains at most two vertices which have a neighbour in $F_2\cup F_4$. Colour $A_3$ with colours in $\{2,\ldots,\omega\}$ so that the vertices which have a neighbour in $F_2\cup F_4$ are coloured with colours in $\{2,3\}$ if exist.

$\bullet$ Colour $C_{1,3},C_{2,4},C_{3,5},C_{4,1},C_{5,2}$ with colours $2,\omega-1,\omega,3,1$ in order. By \autoref{clm:28}, $C_{2,4}\cup C_{5,2}$ and $A_2$, $C_{1,3}\cup C_{3,5}$ and $A_3$ do not conflict. The reason why $C_{4,1}$ and $A$ do not conflict is similar to Case 2.1. So $C$ and $A$ do not conflict.

$\bullet$ Colour $F_1,\ldots,F_5$ with colours $\omega,\omega-1,\omega,\omega-1,\omega-1$ in order.

\case{3.2} \{$A_1,\ldots,A_5$\} contains exactly one nonstable set.

By symmetry, assume that $A_2$ is nonstable. By~\ref{item:19}, $A_4\cup A_5$ is empty.

$\bullet$ Colour $v_1,\ldots,v_5$ with colours $3,\omega,1,2,1$ in order.

$\bullet$ Colour $A_2$ with colours in $\{1,\ldots,\omega-1\}$, $A_1$ with colour $1$, $A_3$ with colour $2$.

$\bullet$ Colour $C_{1,3},C_{2,4},C_{3,5},C_{4,1},C_{5,2}$ with colours $2,\omega-1,\omega-2,\omega,3$ in order. By \autoref{clm:28}, $A_2$ and $C_{2,4}\cup C_{5,2}$, $A_3$ and $C_{1,3}$ do not conflict.

Suppose that $a_1\in A_2$ with colour $\omega-2$ is adjacent to $c\in C_{3,5}$. Then there is a vertex $a_2\in A_2$ such that $a_2a_1\in E$ and $a_2c\notin E$. Then $\{a_1,c,v_5,v_1,v_2,a_2\}$ induces an $S_1$. So $C$ and $A$ do not conflict.

$\bullet$ Colour $F_1,\ldots,F_5$ with colours $\omega,\omega-1,\omega,\omega-1,\omega-1$ in order.

\case{3.3} \{$A_1,\ldots,A_5$\} contains no nonstable sets.

\case{3.3.1} One of \{$A_1,\ldots,A_5$\} is empty.

By symmetry, assume that $A_1$ is empty.

$\bullet$ Colour $v_1,\ldots,v_5$ with colours $1,3,2,1,2$ in order.

$\bullet$ Colour $A_2,A_3,A_4,A_5$ with colours $1,1,2,3$ in order.

$\bullet$ Colour $C_{1,3},C_{2,4},C_{3,5},C_{4,1},C_{5,2}$ with colours $6,2,3,4,5$ in order. By \autoref{clm:28}, $C_{2,4}$ and $A_4$, $C_{3,5}$ and $A_5$ do not conflict.

$\bullet$ Colour $F_1,\ldots,F_5$ with colours $6,5,6,6,6$ in order.

\case{3.3.2} None of \{$A_1,\ldots,A_5$\} are empty.

$\bullet$ Colour $v_1,\ldots,v_5$ with colours $3,2,4,1,4$ in order.

$\bullet$ Colour $A_1,\ldots,A_5$ with colours $1,1,3,2,2$ in order.

$\bullet$ Colour $C_{2,4},C_{4,1},C_{5,2}$ with colours $4,5,6$ in order. By \autoref{clm:28}, $C_{1,3}$ and $A_1$, $C_{3,5}$ and $A_5$ do not conflict. By \autoref{clm:29}, each vertex in  $C_{1,3}$ is complete to one of $A_2$ and $A_4\cup A_5$ and anti-complete to the other, and each vertex in  $C_{3,5}$ is complete to one of $A_1\cup A_2$ and $A_4$ and anti-complete to the other. Colour $C_{1,3}$ and $C_{3,5}$ with colours in $\{1,2\}$ so that $C$ and $A$ do not conflict. Suppose that $c_1\in C_{1,3}$, $c_2\in C_{3,5}$, $c_1c_2\in E$. If $c_1$ and $c_2$ have the same colour, say 1, then both of them are adjacent to some vertex $a\in A_4$, and so \{$a,c_1,c_2,v_3$\} induces a diamond. So $C_{1,3}$ and $C_{3,5}$ do not conflict.

$\bullet$ Colour $F_1,\ldots,F_5$ with colours $5,6,6,6,6$ in order.

This completes the proof of \autoref{thm:imperfect}.
\end{proof}

\section{Computing the Chromatic Number of ($P_6$, diamond)-Free Graphs in Polynomial Time}
\label{sec:poly}

In this section we prove the following \autoref{thm:poly}:

\begin{theorem}\label{thm:poly}
The chromatic number of ($P_6$,diamond)-free graphs can be computed in polynomial time.
\end{theorem}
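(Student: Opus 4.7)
The plan is to combine the $\chi$-bound of \autoref{thm:bound} with the Lov{\'a}sz theta function, using a small finite list of $6$-vertex-critical subgraphs to resolve the residual ambiguity.

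Given an input $(P_6,\text{diamond})$-free graph $G$, first compute $\vartheta(\overline{G})$ in polynomial time. We always have $\omega(G)\le\vartheta(\overline{G})\le\chi(G)$. If $\vartheta(\overline{G})>6$, then $\chi(G)>6$, so \autoref{thm:bound} forces $\chi(G)=\omega(G)$, and the chain $\omega(G)\le\vartheta(\overline{G})\le\chi(G)$ collapses to equalities; in this branch we simply output $\lceil\vartheta(\overline{G})\rceil$. Otherwise $\vartheta(\overline{G})\le 6$ yields $\omega(G)\le 6$ and hence $\chi(G)\le 6$ by \autoref{thm:bound}, and it remains only to pin down $\chi(G)\in\{1,\ldots,6\}$.

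For this second regime, I would test $k$-colourability for $k=1,2,3,4,5$ in turn. The cases $k=1,2$ are trivial, and $k=3,4$ can be decided in polynomial time on $P_6$-free graphs (the case $k=4$ is \autoref{thm:4CP6}, and $k=3$ is also known to be polynomial on $P_6$-free graphs). For $k=5$, by \autoref{thm:exhaustive} it suffices to exhibit a finite list of all $6$-vertex-critical $(P_6,\text{diamond})$-free graphs and test induced-subgraph containment, which is polynomial whenever the list is finite. The claim I would prove is that this list is exactly $\{K_6, H^*\}$ for a single $K_6$-free graph $H^*$; once this is established the algorithm is complete, for if no member is contained then $\chi(G)\le 5$ and we iterate downward, and otherwise $\chi(G)=6$.

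The main obstacle is proving uniqueness of $H^*$, i.e.\ that there is exactly one $6$-vertex-critical $(P_6,\text{diamond},K_6)$-free graph. Let $H$ be any such graph. By \autoref{lem:critical}, $H$ is connected, has no clique cutset or comparable vertices, and $\delta(H)\ge 5$; since $H$ is $K_6$-free, $\omega(H)\le 5<\chi(H)=6$, so $H$ is imperfect. \autoref{thm:SPGT}, combined with $P_6$-freeness (which rules out $C_n$ for $n\ge 7$) and the easy observation that $\overline{C_n}$ with $n\ge 7$ contains a diamond, then forces $H$ to contain a $C_5$. From here I would apply the partition $Q\cup A\cup B\cup C\cup F\cup Z$ and the $26$ structural properties of \autoref{sec:imperfect} to $H$, replaying the case analysis of \autoref{thm:imperfect} but now strengthened by $\delta(H)\ge 5$, $K_6$-freeness, and the absence of comparable vertices or clique cutsets. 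The expectation is that these strong constraints rule out every configuration save one, and that the unique surviving graph $H^*$ can be written down explicitly and directly verified to be $6$-vertex-critical; a computer search over small $(P_6,\text{diamond},K_6)$-free graphs would be a natural complement to identify $H^*$ and cross-check uniqueness.
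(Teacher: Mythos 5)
Your algorithmic skeleton coincides with the paper's: compute $\vartheta(\overline{G})$, use \autoref{thm:bound} to conclude $\chi(G)=\lceil\vartheta(\overline{G})\rceil$ when the theta value is large, and otherwise decide $k$-colourability for small $k$ via \autoref{thm:4CP6} together with \autoref{thm:exhaustive} applied to a finite list of $6$-vertex-critical graphs. (The paper thresholds at $\vartheta(\overline{G})>5$ rather than $>6$, which makes the second branch $K_6$-free so that only the $6$-vertex-critical ($P_6$, diamond, $K_6$)-free graphs are needed; your variant, which keeps $K_6$ in the list, is an equivalent repackaging.)

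The gap is that the entire mathematical content of the theorem is the finiteness of that critical list, and you do not prove it: you end with the ``expectation'' that replaying the case analysis of \autoref{thm:imperfect} under the extra hypotheses $\delta(H)\ge 5$, $K_6$-freeness and the absence of comparable vertices and clique cutsets will rule out all but one configuration. That plan does not go through as stated. The case analysis in \autoref{thm:imperfect} constructs a $\max\{6,\omega\}$-colouring, which for a $6$-chromatic graph is merely a $6$-colouring and says nothing about criticality; to exclude a configuration you must instead produce a $5$-colouring or a structural contradiction, which is a different and substantially harder task. The paper does this with a new layer of structure theory occupying all of \autoref{sec:poly}: \autoref{lem:Hfree-45}, \autoref{lem:H4free}, \autoref{lem:H5free} and \autoref{lem:H1free} exclude the configurations $S_{\omega-2}$, $D_1$ and $D_2$ from critical graphs; \autoref{lem:domi} then shows that every $6$-vertex-critical graph with $\omega\in\{3,4,5\}$ is $(K_\omega+K_1)$-free, so a maximum clique dominates, and the analysis of the resulting sets $B_i$ eliminates $\omega=4$ and $\omega=5$ (\autoref{thm:omega4}, \autoref{thm:omega5}); for $\omega=3$ the finiteness comes from \autoref{thm:P6C3} and the Clebsch graph (or from an exhaustive generation algorithm with a proof of completeness). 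Your fallback of ``a computer search over small graphs'' does not close the gap either: checking small graphs cannot certify that no large $6$-vertex-critical graph exists; one needs either the structural argument or a generation algorithm whose exhaustiveness is proved, as in~\cite{CGSZ20,GS18}.
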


This answers an open question from~\cite{CHM18}. To prove \autoref{thm:poly}, we use \autoref{thm:K6free} below.

\begin{theorem}\label{thm:K6free}
The chromatic number of ($P_6$,diamond,$K_6$)-free graphs can be computed in polynomial time.
\end{theorem}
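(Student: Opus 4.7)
The plan is to exploit the $\chi$-bound of \autoref{thm:bound}: since $G$ is $K_6$-free, $\omega(G)\le 5$, so $\chi(G)\le\max\{6,\omega(G)\}=6$. Thus $\chi(G)\in\{1,\ldots,6\}$, and it suffices to decide $q$-colourability for each $q\in\{1,2,3,4,5\}$ in polynomial time and output the smallest $q$ that succeeds (or $6$ otherwise). The cases $q\in\{1,2\}$ are trivial, $q=4$ is handled by \autoref{thm:4CP6}, and $q=3$ is handled by the known polynomial-time algorithm for 3-colourability on $P_6$-free graphs.

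The only substantial step is deciding 5-colourability. My plan is to prove that there is exactly one 6-vertex-critical $(P_6,\text{diamond},K_6)$-free graph, call it $H^{*}$. Since the class is hereditary, \autoref{thm:exhaustive} then gives a polynomial-time algorithm for 5-colourability: test whether $G$ contains $H^{*}$ as an induced subgraph.

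To classify the 6-vertex-critical graphs $H$ in the class I would proceed structurally. By \autoref{lem:critical}, $H$ is connected, has no clique cutset or comparable vertices, and $\delta(H)\ge 5$. Because $\chi(H)=6>\omega(H)$, $H$ is imperfect, so by \autoref{thm:SPGT} it contains a $C_5$, and the partition $V=Q\cup A\cup B\cup C\cup F\cup Z$ from \autoref{sec:imperfect} applies together with the structural properties established there. I would then revisit each subcase of the colouring proof of \autoref{thm:imperfect} (Cases~1.1 through~3.3.2): each construction there uses $\max\{6,\omega\}=6$ colours in general, and I would check whether, under the tighter hypothesis $\omega(H)\le 5$, the construction (or a small modification of it) actually yields a 5-colouring. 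Any subcase admitting a 5-colouring is excluded as a source of a 6-vertex-critical graph, and combined with the criticality constraints ($\delta\ge 5$, no clique cutset, no comparable vertices) this should rule out almost every configuration.

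The main obstacle will be this case analysis. The colouring in \autoref{thm:imperfect} is tight at 6 colours in many subcases, so the argument for 5-colourability requires compressing each construction, typically by exploiting the absence of a $K_5$ or the bound $|A_i|\le \omega-1 \le 4$ to free a colour. I expect only a handful of candidate structures to survive this reduction; identifying them, confirming that they collapse to a single graph $H^{*}$, and verifying directly that $H^{*}$ is indeed $(P_6,\text{diamond},K_6)$-free with $\chi(H^{*})=6$ and vertex-critical (possibly assisted by a small computer search to confirm uniqueness) is where the remaining work is concentrated.
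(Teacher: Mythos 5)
Your outer reduction matches the paper's: by \autoref{thm:bound} a $K_6$-free graph in the class satisfies $\chi\le 6$, the small values of $q$ are handled by known algorithms (\autoref{thm:4CP6} and the $P_6$-free $3$-colourability result), and the only real content is deciding $5$-colourability, which you reduce via \autoref{thm:exhaustive} to showing that the class has finitely many (in fact one) $6$-vertex-critical members. Up to this point the proposal is correct and identical to the paper.

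The gap is in how you propose to establish that finiteness. Revisiting the subcases of the colouring proof of \autoref{thm:imperfect} and checking which ones can be compressed to $5$ colours can, at best, show that certain configurations of the $C_5$-partition force $5$-colourability and hence contain no critical graph. But each surviving configuration still describes an \emph{infinite} family of graphs, so this analysis alone cannot bound the number of critical graphs, let alone pin down a unique $H^{*}$; ``combined with the criticality constraints this should rule out almost every configuration'' is precisely the step that carries all the difficulty, and it is not a compression of the existing colouring argument. The paper needs genuinely different structural machinery here: first it shows $\omega\ge 3$ for any $6$-vertex-critical graph via \autoref{thm:P6C3}, then splits on $\omega\in\{3,4,5\}$; the key tool is \autoref{lem:domi}, that such a graph is $(K_\omega+K_1)$-free, so every maximum clique $A$ dominates and one can analyse the partition of $V\setminus A$ by attachment vertex (together with the $S_{\omega-2}$-, $D_1$- and $D_2$-freeness lemmas). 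For $\omega=4,5$ this yields a degree contradiction (no critical graphs at all), and for $\omega=3$ finiteness comes from embedding $B\setminus B_i$ into the $16$-vertex Clebsch graph via \autoref{thm:P6C3} (with uniqueness of the $25$-vertex graph confirmed by computer search). None of this is present or replaceable by your proposed case-by-case recolouring, so as written the proposal does not prove the theorem.
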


To prove \autoref{thm:K6free}, we use the following three theorems.

\begin{theorem}\label{thm:omega3}
There is one 6-vertex-critical ($P_6$,diamond)-free graph with clique number 3.
\end{theorem}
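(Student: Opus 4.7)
The plan is to combine the Strong Perfect Graph Theorem with the structural partition of \autoref{sec:imperfect} and the constraints of $\omega(G)=3$ together with $6$-vertex-criticality. First I would argue that any such $G$ is necessarily imperfect: a perfect graph with $\omega=3$ has $\chi=3<6$. By \autoref{thm:SPGT}, $G$ contains an induced odd hole or odd antihole; $P_6$-freeness rules out odd holes of length $\ge 7$, and a direct check (the four vertices $v_1,v_3,v_4,v_6$ of $\overline{C_{2k+1}}$ induce a diamond whenever $k\ge 3$) rules out odd antiholes of length $\ge 7$. Hence $G$ contains an induced $C_5$, and the partition $V=Q\cup A\cup B\cup C\cup F\cup Z$ from \autoref{sec:imperfect} applies.

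Next I would leverage $\omega(G)=3$ to collapse the partition. Each component of $A_i$ is a clique which together with $v_i$ forms a larger clique, so every component of $A_i$ has at most two vertices; moreover, if $A_i$ contains an edge $aa'$ and $A_{i+2}$ contains a vertex $a''$, then by property~\ref{item:3} the set $\{v_i,a,a',a''\}$ induces a diamond, sharpening property~\ref{item:19}. Each $B_{i,i+1}$ is a clique attached to an edge of $Q$, so $|B_{i,i+1}|\le 1$, and the adjacency rules between $C$, $F$ and the $A_i$'s in properties~\ref{item:9}--\ref{item:18} become much more restrictive since they can no longer create a $K_4$. Combined with \autoref{lem:critical}, which forces $\delta(G)\ge 5$ and rules out clique cutsets and comparable vertices, only a small family of configurations survives.

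The final step is a case analysis paralleling the one in the proof of \autoref{thm:imperfect}, split according to which of $B_{2,3}$, $B_{4,5}$ are nonempty and which $A_i$'s are nonempty or nonstable. In each surviving configuration I would either refine the colouring from \autoref{thm:imperfect} to a proper $5$-colouring (contradicting $6$-vertex-criticality) or pin down all remaining adjacencies uniquely and verify $\chi(G)=6$ directly; the cases in which $F$ plays a role can be attacked via \autoref{thm:P6C3} applied to the triangle-free portion of $G$. The main obstacle will be the bookkeeping: even after the $\omega=3$ collapse many subcases remain, and ruling out $5$-colourability in each subcase requires careful use of the no-comparable-vertex condition together with $\delta(G)\ge 5$. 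A computer-assisted search through small graphs with $\omega=3$, $\delta\ge 5$ and the prescribed forbidden induced subgraphs is a natural companion to the argument, confirming that exactly one graph satisfies all constraints.
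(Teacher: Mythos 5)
Your proposal does not actually prove the statement; it is an outline whose decisive step is missing. The theorem asserts that there is \emph{exactly one} 6-vertex-critical $(P_6,\text{diamond})$-free graph with clique number $3$, and nothing in your sketch establishes existence or uniqueness of that graph. The reductions you describe (imperfection, the $C_5$-partition, the $\omega=3$ collapse giving $|B_{i,i+1}|\le 1$ and components of $A_i$ of size at most $2$) are all sound, but they only narrow the search space; the final step, ``either refine the colouring \dots\ or pin down all remaining adjacencies uniquely and verify $\chi(G)=6$ directly,'' is precisely the content of the theorem and is left undone. Note that the unique critical graph is a $25$-vertex graph obtained from the complement of the Schl\"afli graph by deleting two vertices; it contains the Gr\"otzsch graph and has no short description in terms of the $C_5$-partition. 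A hand case analysis that terminates in this specific graph would be a substantial piece of work --- the paper's own computer-free argument in the Appendix, which leans on \autoref{thm:P6C3} and the Clebsch graph, only manages to prove \emph{finiteness} (an upper bound of $27$ vertices), not uniqueness. Your plan gives no indication of how the surviving configuration would be forced into a single isomorphism type.

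For comparison: the paper's actual proof of \autoref{thm:omega3} is entirely computational --- an exhaustive generation algorithm for $k$-critical $\mathcal{H}$-free graphs (with a separately proven correctness guarantee) enumerates all 6-vertex-critical $(P_6,\text{diamond},K_4)$-free graphs and outputs exactly one. Your closing remark that a computer search would be ``a natural companion \dots\ confirming that exactly one graph satisfies all constraints'' is, in effect, the entirety of the paper's proof; presented as a companion to an uncompleted hand argument, it cannot carry the theorem on its own unless you specify the search (what is generated, why the enumeration is exhaustive, and why termination is guaranteed). As written, the proposal has a genuine gap at its core.
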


\begin{theorem}\label{thm:omega4}
There are no 6-vertex-critical ($P_6$,diamond)-free graphs with clique number 4.
\end{theorem}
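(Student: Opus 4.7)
The plan is to assume, for contradiction, that a 6-vertex-critical $(P_6,\mathrm{diamond})$-free graph $G$ with $\omega(G)=4$ exists and then derive a proper $5$-colouring of $G$, contradicting $\chi(G)=6$. By \autoref{lem:critical}, such a $G$ is connected, has no clique cutset or comparable vertices, and $\delta(G)\ge 5$. Since $\chi(G)=6>4=\omega(G)$, \autoref{thm:SPGT} forces $G$ to be imperfect and hence to contain an induced $C_5$, so we may apply the partition $V=Q\cup A\cup B\cup C\cup F\cup Z$ of \autoref{sec:imperfect}.

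First I would record the sharp size bounds that $\omega(G)=4$ imposes on the partition: each component of $A_i$ is a clique of size at most $3$ (since it is a clique together with $v_i$), $|B_{i,i+1}|\le 2$, and $|F_i|\le 1$. With these bounds, many cases of the proof of \autoref{thm:imperfect} collapse; for instance, if $|B_{2,3}|=|B_{4,5}|=2$ then \ref{item:20} and \ref{item:24} leave at most one of $A_2,A_5$ nonempty, and that set has size at most~$3$. Similar collapses happen in Cases 1.2, 1.3, 2 and 3.

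The second step is a case-by-case recolouring mirroring Cases 1--3 in the proof of \autoref{thm:imperfect}, but producing in each case a proper $5$-colouring rather than a $6$-colouring. Most colour classes in the original proof contain very few vertices (for example $B_{i,i+1}$ contains at most two and $F_i$ at most one), so the colour reserved for the largest value $\omega=6$ can be eliminated by re-using a colour already assigned to $A$ or $B$. The "do not conflict" verifications carry over almost verbatim, using the adjacency properties \ref{item:9}--\ref{item:18} together with the newly proved \ref{item:22}--\ref{item:26}.

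The main obstacle will be those sub-configurations containing a component $K\subseteq A_i$ of size exactly~$3$, for then $K\cup\{v_i\}$ is a $K_4$ that consumes $4$ of the $5$ available colours, leaving only a single colour free for the neighbours of $K$ in $C$ and $F$. Here I plan to use \ref{item:9}--\ref{item:11}, \ref{item:15}, \ref{item:16} to show that the interaction of $K$ with $C\cup F$ is very sparse, and to combine this with the criticality hypotheses (no clique cutset, no comparable pair, $\delta(G)\ge 5$) to prune the remaining tight cases: any surviving configuration either induces a $P_6$ or a diamond, or produces a clique cutset or comparable pair, contradicting \autoref{lem:critical}. If a small residual family resists this analysis, it could alternatively be eliminated by an exhaustive computer search over $(P_6,\mathrm{diamond},K_5)$-free graphs of bounded order with $\delta\ge 5$, analogous to the enumeration behind \autoref{thm:omega3}.
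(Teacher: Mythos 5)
Your proposal takes a genuinely different route from the paper: for this theorem the paper does not revisit the $C_5$-partition at all, but instead proves a dominating-clique lemma (\autoref{lem:domi}, via the $S_2$-freeness from \autoref{lem:Hfree-45}) showing that a 6-vertex-critical example with $\omega=4$ would be $(K_4+K_1)$-free, so that $V=A\cup B$ for a fixed $K_4$ named $A$; it then locates a second $K_4$ meeting all four parts $B_1,\ldots,B_4$ and derives the contradiction $d(v_1)\le 3<5$. Your plan, by contrast, is to strengthen the colouring argument of \autoref{thm:imperfect} from six colours to five under the hypothesis $\omega\le 4$, using the criticality conditions to kill the tight configurations.

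As written, however, the proposal has a genuine gap: the entire content of the theorem is concentrated in the step you defer. Asserting that ``the colour reserved for the largest value $\omega=6$ can be eliminated by re-using a colour already assigned to $A$ or $B$'' is not a proof; several of the colourings in \autoref{thm:imperfect} use all six colours in an essential way, and the properness of any compressed colouring would have to be re-verified case by case. In particular, \autoref{clm:Z colouring} explicitly invokes $\omega\ge 6$ to guarantee two free colours for a nontrivial component of $Z$, and your plan never addresses $Z$ at all. The ``tight cases'' with a size-3 component of $A_i$ (where $K\cup\{v_i\}$ already uses four of the five colours) are precisely where a new idea is needed, and you only state an intention to prune them using \ref{item:9}--\ref{item:16} and \autoref{lem:critical}, without exhibiting the argument. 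Finally, the proposed fallback --- an exhaustive computer search over $(P_6,\mathrm{diamond},K_5)$-free graphs ``of bounded order'' with $\delta\ge 5$ --- is circular as stated: no bound on the order of the residual configurations has been established, and such a bound is essentially the finiteness statement you would be trying to verify. To make your route work you would need to actually produce the 5-colourings (or explicit forbidden structures) in every case; the paper's dominating-clique argument sidesteps all of this.
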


\begin{theorem}\label{thm:omega5}
There are no 6-vertex-critical ($P_6$,diamond)-free graphs with clique number 5.
\end{theorem}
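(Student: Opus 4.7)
The plan is to argue by contradiction: assume $G$ is a $6$-vertex-critical $(P_6,\text{diamond})$-free graph with $\omega(G)=5$, and derive a proper $5$-colouring of $G$, contradicting $\chi(G)=6$. By~\autoref{lem:critical} we may take $G$ to be connected, to have no clique cutset and no comparable pair of vertices, and to satisfy $\delta(G)\ge 5$. Since $\chi(G)>\omega(G)$, $G$ is imperfect, so~\autoref{thm:SPGT} produces an induced odd hole or odd antihole. $P_6$-freeness forbids odd holes on at least seven vertices, and a short check shows that in $\overline{C_n}$ with $n\ge 7$ the four vertices $\{0,2,3,5\}$ induce a diamond, so odd antiholes on at least seven vertices are forbidden as well. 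Hence $G$ contains an induced $C_5$, and we apply the partition $V=Q\cup A\cup B\cup C\cup F\cup Z$ and properties~\ref{item:1}--\ref{item:26} of~\autoref{sec:imperfect}.

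I would then mirror the three-branch case analysis of~\autoref{thm:imperfect}: (1) both $B_{2,3}$ and $B_{4,5}$ are nonempty; (2) exactly one of them is nonempty; (3) $B$ is empty. In the subcases where the colouring of~\autoref{thm:imperfect} already scales with $\omega(G)$ and uses at most $\omega(G)$ colours (Cases 1.1, 1.2, 2.1, 2.2, 2.3, 3.1, 3.2), substituting $\omega=5$ into the same scheme produces a $5$-colouring and nothing further is required. The delicate subcases are 1.3.2, 2.4 and 3.3, where the original proof uses a fixed palette of six colours regardless of $\omega$ because the relevant sets are small; for these I would refine the palette by merging two colour classes, leaning on the additional leverage that $\omega(G)=5$ bounds every relevant clique (for instance $B_{i,i+1}\cup\{v_i,v_{i+1}\}$, or $K\cup B_{j,j+1}$ for a component $K$ of $A_i$) and on the criticality hypotheses that forbid small separators.

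Concretely, in each delicate subcase I would exhibit a pair of colour classes from the six-colour scheme whose union remains stable after merging, and then show that every potential obstruction to the merge --- a crossing edge inside $C$, $F$ or $Z$, or a new triangle extending to a larger clique --- either embeds a $K_6$ (contradicting $\omega(G)=5$), creates a clique cutset or a pair of comparable vertices (contradicting criticality), or realises a forbidden $P_6$ or diamond. Once $V\setminus Z$ is $5$-coloured so that one colour misses $C\cup F$ and $F$ uses at most two colours,~\autoref{clm:Z colouring} extends the colouring to $Z$. The principal obstacle is the bookkeeping across the delicate subcases, and especially across Case~3 where $G$ is $S_1$-free and \autoref{clm:28} and~\autoref{clm:29} constrain the adjacency between $C$ and $A$; there the $A_i$ are stable and small, so the merging arguments must be verified subcase by subcase to ensure that no forbidden configuration is inadvertently created.
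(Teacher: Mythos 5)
Your overall strategy --- rerunning the $C_5$-partition colouring of \autoref{thm:imperfect} and squeezing it into five colours --- is not the paper's route, and as written it has two genuine gaps. First, the claim that the ``scaling'' subcases go through by ``substituting $\omega=5$'' is false: in \autoref{thm:imperfect} the parameter is $\omega:=\max\{6,\omega(G)\}$, so for $\omega(G)=5$ those colourings use six colours, and several correctness arguments explicitly need at least six. The clearest failure is the extension to $Z$ in \autoref{clm:Z colouring}: a nontrivial component $K$ of $Z$ is a $K_2$ with up to two neighbours in $C$ and up to two colours already spent on $F$, and the step ``there are at least 2 colours left for $K$ since $\omega\ge 6$'' collapses with a five-colour palette (four colours can be excluded, leaving one colour for an edge). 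So even the cases you declare finished require new arguments. Second, for the ``delicate'' subcases (1.3.2, 2.4, 3.3) you only promise to ``exhibit a pair of colour classes whose union remains stable after merging''; no such pair is identified and no obstruction analysis is carried out. This is precisely where the work lies, and it cannot be dismissed by appealing to criticality plus forbidden subgraphs: for clique number $3$ the analogous merging is genuinely impossible (the graph $\mathcal{G}$ of \autoref{thm:omega3} is 6-vertex-critical with clique number $3$), so any merging argument must use $\omega(G)=5$ in an essential, concrete way that your sketch does not supply.

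The paper proves \autoref{thm:omega5} by a completely different mechanism: it fixes a $K_5$, say $A=\{a_1,\dots,a_5\}$, uses \autoref{lem:domi} (which rests on the $S_3$-freeness provided by \autoref{lem:Hfree-45}) to conclude that $G$ is $(K_5+K_1)$-free, so that $V=A\cup B$ with $B_i=N(a_i)\setminus A$; it then establishes structural facts about the $B_i$ (no two of them contain triangles, each is $K_4$-free, $B$ has no induced $C_5$, no $B\setminus(B_i\cup B_j)$ is 2-colourable), builds a second $K_5$ meeting every $B_i$, and finishes with a degree contradiction against $\delta(G)\ge 5$. If you want to salvage a colouring-based proof you would need, at a minimum, a replacement for the $Z$-extension and explicit five-colourings with verified properness in every subcase; as it stands the proposal is a plan rather than a proof.
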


\begin{figure}[h!]
\centering
\begin{tikzpicture}[scale=4.5]
\tikzstyle{vertex}=[circle, draw, fill=black, inner sep=1pt, minimum size=5pt]
        \node[vertex](0) at (0.1736481777, -0.984807753) {};
        \node[vertex](1) at (1.0, 0.0)  {};
        \node[vertex](2) at (0.5971585917, 0.8021231928) {};
        \node[vertex](3) at (-0.9396926208, -0.3420201433) {};
        \node[vertex](4) at (0.8936326403, -0.4487991802) {};
        \node[vertex](5) at (0.8936326403, 0.4487991802) {};
        \node[vertex](6) at (0.396079766, 0.9182161069) {};
        \node[vertex](7) at (-0.6862416379, -0.7273736416) {};
	    \node[vertex](8) at (0.9730448706, 0.2306158707) {};
	    \node[vertex](9) at (-0.9396926208, 0.3420201433) {};
	    \node[vertex](10) at (0.7660444431, 0.6427876097) {};
	    \node[vertex](11) at (0.1736481777, 0.984807753) {};
	    \node[vertex](12) at (-0.8354878114, 0.5495089781) {};
	    \node[vertex](13) at (-0.5, 0.8660254038) {};
	    \node[vertex](14) at (0.5971585917, -0.8021231928) {};
	    \node[vertex](15) at (-0.0581448289, 0.9983081583) {};
	    \node[vertex](16) at (-0.5, -0.8660254038) {};
	    \node[vertex](17) at (-0.2868032327, 0.9579895123) {};
	    \node[vertex](18) at (-0.6862416379, 0.7273736416) {};
	    \node[vertex](19) at (0.9730448706, -0.2306158707) {};
	    \node[vertex, label=below:$y$](20) at (-0.2868032327, -0.9579895123) {};
	    \node[vertex](21) at (0.396079766, -0.9182161069) {};
	    \node[vertex](22) at (0.7660444431, -0.6427876097) {};
	    \node[vertex](23) at (-0.0581448289, -0.9983081583) {};
	    \node[vertex](24) at (-0.9932383577, 0.1160929141) {};
	    \node[vertex, label=left:$x$](25) at (-0.9932383577, -0.1160929141) {};
	    \node[vertex](26) at (-0.8354878114, -0.5495089781) {};
	   
    \Edge(0)(1)
 	\Edge(0)(2)
 	\Edge(0)(3)
 	\Edge(0)(4)
	\Edge(0)(5)
	\Edge(0)(6)
 	\Edge(0)(7)
 	\Edge(0)(8)
 	\Edge(0)(9)
	\Edge(0)(10)
	
	\Edge(1)(2)
 	\Edge(1)(11)
 	\Edge(1)(12)
 	\Edge(1)(13)
	\Edge(1)(14)
	\Edge(1)(15)
 	\Edge(1)(16)
 	\Edge(1)(17)
 	\Edge(1)(18)
 	
 	\Edge(2)(19)
 	\Edge(2)(20)
 	\Edge(2)(21)
 	\Edge(2)(22)
	\Edge(2)(23)
	\Edge(2)(24)
 	\Edge(2)(25)
 	\Edge(2)(26)
 	
	\Edge(3)(4)
 	\Edge(3)(11)
 	\Edge(3)(12)
 	\Edge(3)(13)
	\Edge(3)(14)
	\Edge(3)(19)
 	\Edge(3)(20)
 	\Edge(3)(21)
 	\Edge(3)(22)
	
	\Edge(4)(15)
 	\Edge(4)(16)
 	\Edge(4)(17)
 	\Edge(4)(18)
	\Edge(4)(23)
	\Edge(4)(24)
 	\Edge(4)(25)
 	\Edge(4)(26)
 	
 	\Edge(5)(6)
 	\Edge(5)(11)
 	\Edge(5)(12)
 	\Edge(5)(15)
	\Edge(5)(16)
	\Edge(5)(19)
 	\Edge(5)(20)
 	\Edge(5)(23)
 	\Edge(5)(24)
 	
 	\Edge(6)(13)
 	\Edge(6)(14)
 	\Edge(6)(17)
 	\Edge(6)(18)
	\Edge(6)(21)
	\Edge(6)(22)
 	\Edge(6)(25)
 	\Edge(6)(26)
 	
 	\Edge(7)(8)
 	\Edge(7)(11)
 	\Edge(7)(13)
 	\Edge(7)(15)
	\Edge(7)(17)
	\Edge(7)(19)
 	\Edge(7)(21)
 	\Edge(7)(23)
 	\Edge(7)(25)
 	
 	\Edge(8)(12)
 	\Edge(8)(14)
 	\Edge(8)(16)
 	\Edge(8)(18)
	\Edge(8)(20)
	\Edge(8)(22)
 	\Edge(8)(24)
 	\Edge(8)(26)
 	
 	\Edge(9)(10)
 	\Edge(9)(11)
 	\Edge(9)(14)
 	\Edge(9)(16)
	\Edge(9)(17)
	\Edge(9)(20)
 	\Edge(9)(21)
 	\Edge(9)(23)
 	\Edge(9)(26)
 	
 	\Edge(10)(12)
 	\Edge(10)(13)
 	\Edge(10)(15)
 	\Edge(10)(18)
	\Edge(10)(19)
	\Edge(10)(22)
 	\Edge(10)(24)
 	\Edge(10)(25)
 
    \Edge(11)(18)
 	\Edge(11)(22)
 	\Edge(11)(24)
 	\Edge(11)(25)
	\Edge(11)(26)
	
	\Edge(12)(17)
 	\Edge(12)(21)
 	\Edge(12)(23)
 	\Edge(12)(25)
	\Edge(12)(26)
	
	\Edge(13)(16)
 	\Edge(13)(20)
 	\Edge(13)(23)
 	\Edge(13)(24)
	\Edge(13)(26)
	
	\Edge(14)(15)
 	\Edge(14)(19)
 	\Edge(14)(23)
 	\Edge(14)(24)
	\Edge(14)(25)
	
	\Edge(15)(20)
 	\Edge(15)(21)
 	\Edge(15)(22)
 	\Edge(15)(26)
	
	\Edge(16)(19)
 	\Edge(16)(21)
 	\Edge(16)(22)
 	\Edge(16)(25)
 	
 	\Edge(17)(19)
 	\Edge(17)(20)
 	\Edge(17)(22)
 	\Edge(17)(24)
 	
 	\Edge(18)(19)
 	\Edge(18)(20)
 	\Edge(18)(21)
 	\Edge(18)(23)
 	
 	\Edge(19)(26)
 	\Edge(20)(25)
 	\Edge(21)(24)
 	\Edge(22)(23)
	
\end{tikzpicture}
\caption{The complement of the 27-vertex Schl\"{a}fli graph. The unique 6-vertex-critical ($P_6$,diamond)-free graph $\mathcal{G}$ with clique number 3 is obtained by removing the vertices with labels $x$ and~$y$. }
\label{fig:sch}
\end{figure}
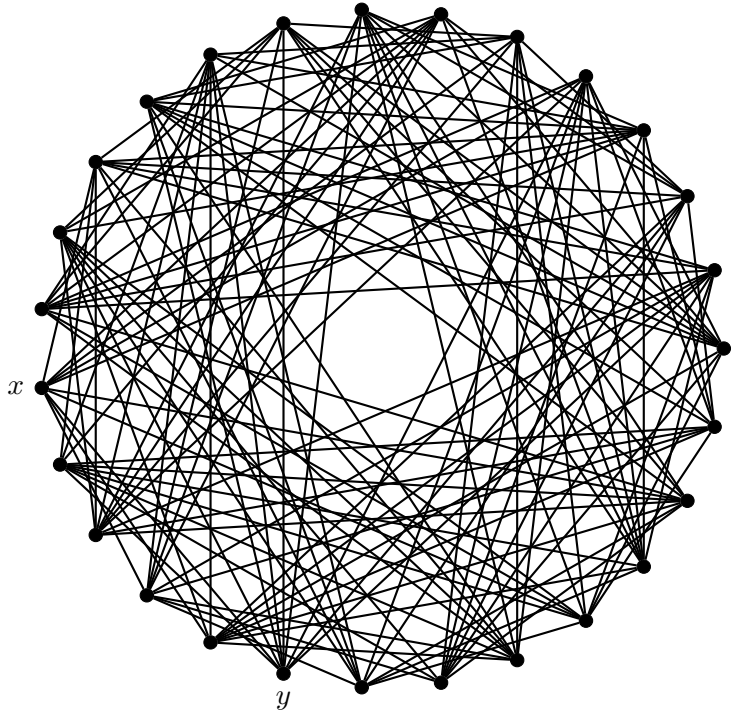

\begin{proof}[Proof of \autoref{thm:poly}]
Let $G$ be a ($P_6$,diamond)-free graph. We can calculate $\vartheta(\overline{G})$ in polynomial time~\cite{GLS84}. When $\vartheta(\overline{G})>5$, since $\omega(G)\leq\vartheta(\overline{G})\leq\chi(G)$~\cite{GLS84} and $\chi(G)\leq max\{\omega(G),6\}$ (i.e.~\autoref{thm:bound}), we have $\chi(G)=\lceil \vartheta(\overline{G}) \rceil$. Then we just have to deal with the case that $\vartheta(\overline{G})\leq5$, which implies that $\omega(G)\leq5$. Then the theorem follows from \autoref{thm:K6free}.
\end{proof}

\begin{proof}[Proof of \autoref{thm:K6free}]
By \autoref{thm:bound} and \autoref{thm:4CP6}, we only need to consider 5-colouring. By \autoref{thm:exhaustive}, we need to prove that there are a finite number of 6-vertex-critical ($P_6$,diamond,$K_6$)-free graphs. 
By \autoref{thm:P6C3}, every 6-vertex-critical ($P_6$,diamond)-free graph has clique number at least 3. Then the theorem follows from  \autoref{thm:omega3}, \autoref{thm:omega4}, and \autoref{thm:omega5}.
\end{proof}

The unique 6-vertex-critical ($P_6$,diamond)-free graph $\mathcal{G}$ with clique number 3 from \autoref{thm:omega3} has 25 vertices and can be obtained from the complement of the Schl\"{a}fli graph by deleting the vertices labelled $x$ and $y$ in Figure~\ref{fig:sch}. This graph can also be inspected at \textit{the House of Graphs}~\cite{BCGM13} at: \url{https://hog.grinvin.org/ViewGraphInfo.action?id=45613}.

The proof of \autoref{thm:omega3} uses computational methods. In the Appendix we give a computer-free proof of a weaker version of \autoref{thm:omega3} (i.e.\ \autoref{thm:omega3_manual}). This weaker theorem still suffices to give a complete computer-free proof of \autoref{thm:poly} and \autoref{thm:K6free}.

\begin{proof}[Proof of \autoref{thm:omega3}]
We used the generation algorithm for $k$-critical $H$-free graphs from~\cite{CGSZ20,GS18} and extended it to generate 6-vertex-critical ($P_6$,diamond,$K_4$)-free graphs. The algorithm terminated in less than 5 minutes and yielded the graph $\mathcal{G}$ as the only 6-vertex-critical ($P_6$,diamond,$K_4$)-free graph.
The source code of the program can be downloaded from~\cite{criticalpfree-site}. We refer to~\cite{CGSZ20,GS18} for more details on the algorithm and the proof of its correctness.
\end{proof}

In the first subsection we prove some lemmas which we will use in the next two subsections to prove \autoref{thm:omega4} and \autoref{thm:omega5}, and in the Appendix to prove \autoref{thm:omega3_manual}.

\subsection{Lemmas}\label{ssc:lemma}
\begin{lemma}\label{lem:Hfree-45}
Every 6-vertex-critical ($P_6$,diamond)-free graph with clique number $\omega$ ($\omega=4,5$) is $S_{\omega-2}$-free. (See \autoref{fig:Sn} for $S_n$.)
\end{lemma}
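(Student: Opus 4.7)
The plan is to argue by contradiction: suppose $G$ is a $6$-vertex-critical $(P_6,$ diamond$)$-free graph with $\omega(G)=\omega\in\{4,5\}$ that contains an induced $S_{\omega-2}$; the goal is then to exhibit a proper $5$-colouring of $G$, contradicting $\chi(G)=6$. First I would fix the $C_5$ $Q=\{v_1,\ldots,v_5\}$ of the $S_{\omega-2}$ copy together with the attached clique $B$ of size $\omega-2$, and apply the partitioning of $V\setminus Q$ from \autoref{sec:imperfect}; by construction $B\subseteq B_{2,3}$. Since $B_{2,3}\cup\{v_2,v_3\}$ is a clique by~\ref{item:4} and $G$ has clique number $\omega$, we must have $|B_{2,3}|=\omega-2$, and $K:=B_{2,3}\cup\{v_2,v_3\}$ is a maximum clique of $G$.

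Next I would collect the immediate structural consequences. Because $|B_{2,3}|\geq2$, properties~\ref{item:20} and~\ref{item:24} force $A_1=A_4=\emptyset$; by~\ref{item:17} a vertex of $F_5$ would be complete to $B_{2,3}$, and hence to $K$, creating a $K_{\omega+1}$, so $F_5=\emptyset$. Using the various $A$--$B$, $B$--$C$, and $B$--$F$ adjacency properties, one verifies that every vertex of $V\setminus K$ has at most one neighbour in $K$.

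The heart of the argument is to construct an explicit proper $5$-colouring of $G$. Since $B_{2,3}\neq\emptyset$, the graph falls into the sub-cases of Case~1 or Case~2 of the proof of \autoref{thm:imperfect}, where a $\max\{6,\omega\}=6$-colouring is constructed. The plan is to redo that construction in each relevant sub-case with a palette of size~$5$, exploiting the size bounds $|B_{i,i+1}|\leq\omega-2\leq3$ and $|\text{component of }A_i|\leq\omega-1\leq4$, together with the emptinesses $A_1=A_4=F_5=\emptyset$, to merge and re-assign colours. For example, in the analogue of Case~1.1 the two ``top'' colours $\omega-1,\omega$ that carried $F$ in the original proof can be consolidated onto a single colour since $F_5=\emptyset$, and the colours on $A_2,B_{2,3},B_{4,5}$ and the $C$-sets can then be reshuffled to remain mutually compatible.

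The main obstacle is the interaction with the set $Z$. \autoref{clm:Z colouring} was proved under the assumption that the palette has size at least~$6$: its extension to a nontrivial component of $Z$ with the maximum two neighbours in $C$ and two $F$-colours among its neighbours needs $\omega-4\geq2$ free colours to accommodate a $K_2$ of $Z$. With a $5$-colour palette this budget is tight, so the $Z$-extension must be re-examined: either such worst-case $Z$-components must be ruled out under the extra hypotheses $A_1=A_4=F_5=\emptyset$ (which further restrict how $Z$ attaches to $C$ and $F$ via properties~\ref{item:21} and~\ref{item:26}), or the $C$- and $F$-colourings on $V\setminus Z$ must be arranged so that together they use only three distinct colours on $N(Z)$. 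This careful revision of \autoref{clm:Z colouring}, combined with the sub-case-by-sub-case tightening of the colouring of $V\setminus Z$, is the main technical burden of the proof.
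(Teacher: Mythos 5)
Your setup matches the opening of the paper's proof: the identification $|B_{2,3}|=\omega-2$, the deductions $A_1=A_4=F_5=\emptyset$, and the observation that every vertex outside the maximum clique $K=B_{2,3}\cup\{v_2,v_3\}$ has at most one neighbour in $K$ are all correct. But the heart of your argument --- ``redo Case 1 or Case 2 of \autoref{thm:imperfect} with a palette of size $5$ by merging and re-assigning colours'' --- is a plan rather than a proof, and I do not believe it can be carried out as stated. Your concrete steps use only $\omega(G)\leq 5$ and the partition properties, never the remaining consequences of $6$-vertex-criticality ($\delta(G)\geq 5$, no comparable vertices, no clique cutsets). Without at least one of these the target statement ``every $(P_6,\text{diamond})$-free graph with $\omega\in\{4,5\}$ containing an $S_{\omega-2}$ is $5$-colourable'' is simply false: the disjoint union of $S_2$ with the $25$-vertex graph $\mathcal{G}$ of \autoref{thm:omega3} is $(P_6,\text{diamond})$-free, has clique number $4$, contains an $S_2$, and has chromatic number $6$. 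So no amount of colour-merging in the generic Case 1/Case 2 colourings (which genuinely need six colours: for instance in Case 1.1 the five $C$-sets alone already receive five distinct colours, and a sixth must remain free for $Z$) can succeed; the structure must first be pruned using criticality, and you never say how.

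That pruning is exactly what the paper's proof does and what is missing here. Using $\delta(G)\geq 5$ it first shows $B_{4,5}=\emptyset$ (a delicate case split between $\omega=4$ and $\omega=5$), then deduces that each vertex of $B_{2,3}$ has at least $6-\omega$ neighbours in $C_{4,1}\cup A_5$, with distinct vertices of $B_{2,3}$ having disjoint such neighbourhoods by diamond-freeness; from this it shows $A_2\cup A_3=\emptyset$ and proves a battery of completeness/anti-completeness claims among $A_5$, $C_{4,1}$ and the other $C$-sets (e.g.\ the guaranteed neighbours yield two nonadjacent vertices of $A_5$ complete to $C_{4,1}$, forcing $C_{4,1},C_{2,4},C_{1,3}$ to be pairwise anti-complete). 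Only after $A_1,A_2,A_3,A_4,B_{4,5},F_5$ have all been eliminated does a $5$-colouring exist, and it is built from scratch rather than by compressing the colouring of \autoref{thm:imperfect}. Your closing remarks about \autoref{clm:Z colouring} correctly identify a secondary obstacle but again only pose it; in the paper the $Z$-components are dispatched directly because by that point $F$ is monochromatic and $K\cup N_C(K)$ is a small clique. In short: correct framing and first observations, but the essential ideas --- exploiting $\delta(G)\geq 5$ together with the no-comparable-vertices and no-clique-cutset conditions to annihilate most of the partition before colouring --- are absent, and the substitute you propose cannot work.
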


\begin{proof}
Let $G=(V,E)$ be a 6-vertex-critical ($P_6$,diamond)-free graph with clique number $\omega$ containing an $S_{\omega-2}$. In the proof of this lemma, we follow the partitioning of $V$ from \autoref{sec:imperfect}. Let the $C_5$ in the $S_{\omega-2}$ be $Q$, and the other two or three vertices in the $S_{\omega-2}$, denoted by $b_1,b_2$ (and $b_3$ if it exists), be in $B_{2,3}$. By $\omega(G)=\omega$, $B_{2,3}$ contains no vertices other than these two or three vertices, and $F_5$ is empty. By~\ref{item:20}, $A_1\cup A_4$ is empty. By~\ref{item:24}, one of $A_2\cup A_3$ and $A_5$ is empty. Assume that $B=B_{2,3}\cup B_{4,5}$. Then the edges between $B_{2,3}$ and $B_{4,5}$ form a matching.

\begin{claim}\label{clm:B45empty}
$B=B_{2,3}$.
\end{claim}
\begin{proof}
Suppose that $B_{4,5}$ is nonempty.

Now we prove that if $u_1\in B_{2,3}$ and $u_2\in B_{4,5}$ are nonadjacent, then both $u_1$ and $u_2$ have no neighbours in $C$. Assume that $c\in C_{4,1}$ is adjacent to $u_1$. Then $\{v_3,u_1,c,v_1,v_5,u_2\}$ induces a $P_6$. So $u_1$ and $C_{4,1}$ are anti-complete. Similarly, $u_2$ and $C_{1,3}$ are anti-complete. By~\ref{item:12}, both $u_1$ and $u_2$ have no neighbours in $C$.

We discuss two cases based on the value of $\omega$, and give a contradiction for each case.

\case{1} $\omega$=4.

Let $b_4\in B_{4,5}$. Then one of $b_1$ and $b_2$, say $b_1$, is nonadjacent to $b_4$. Then both $b_1$ and $b_4$ have no neighbours in $C$.

If $A_5$ is empty, we consider the degree of $b_1$. Since $b_1$ has two neighbours $v_2,v_3$ in $Q$, one neighbour $b_2$ in $B_{2,3}$, at most one neighbour in $B_{4,5}$, and no neighbours in $A\cup C\cup F$, we have $d(b_1)\leq4$.

If $A_2\cup A_3$ is empty, we consider the degree of $b_4$. Since $b_4$ has two neighbours $v_4,v_5$ in $Q$, at most one neighbour in $B_{2,3}$, at most one neighbour in $B_{4,5}\cup F_2$, and no neighbours in $A\cup C$, we have $d(b_4)\leq4$.

\case{2} $\omega$=5.

Now we prove that either $B_{4,5}$ contains at most one vertex, or $B_{2,3}$ and $B_{4,5}$ are anti-complete. Suppose that $b_4,b_5\in B_{4,5}$ such that $b_4b_1\in E$. We may assume that $b_5b_2\notin E$. Recall that if $u_1\in B_{2,3}$ and $u_2\in B_{4,5}$ are nonadjacent, then both $u_1$ and $u_2$ have no neighbours in $C$. Since both $B_{2,3}$ and $B_{4,5}$ contain at least two vertices, $B_{2,3}$ and $C_{4,1}$, $B_{4,5}$ and $C_{1,3}$ are anti-complete. If $c\in C_{4,1}$, then $\{b_2,b_1,b_4,v_5,v_1,c\}$ induces a $P_6$. So $C_{4,1}$ is empty. If $f\in F_3$, then $\{v_1,f,v_3,b_1,b_4,b_5\}$ induces a $P_6$. So $F_3$ is empty. Similarly, $C_{1,3}\cup F_4$ is empty. Then $d(v_1)\leq3$. So either $B_{4,5}$ contains at most one vertex, or $B_{2,3}$ and $B_{4,5}$ are anti-complete.

Let $b_4\in B_{4,5}$. Suppose that $b_4$ has no neighbours in $A_2\cup A_3$. If $B_{4,5}$ contains only one vertex, since $b_4$ has two neighbours $v_4,v_5$ in $Q$, at most one neighbour in $B_{2,3}$, at most one neighbour in $F_2$, and no neighbours in $A\cup C$, we have $d(b_4)\leq4$. If $B_{2,3}$ and $B_{4,5}$ are anti-complete, since $b_4$ has two neighbours $v_4,v_5$ in $Q$, at most two neighbours in $B_{4,5}\cup F_2$, and no neighbours in $B_{2,3}\cup A\cup C$,  we have $d(b_4)\leq4$. So $b_4$ has a neighbour in $A_2\cup A_3$. Then $A_5$ is empty. By symmetry we may assume that $b_1b_4\notin E$, then $b_1$ has no neighbour in $C_{4,1}$. Since $b_1$ has two neighbours $v_2,v_3$ in $Q$, two neighbours $b_2,b_3$ in $B_{2,3}$, no neighbours in $A\cup B_{4,5}\cup C$, we have $d(b_1)=4$.
\end{proof}

Then by $\delta(G)\geq5$, each vertex of $B_{2,3}$ has at least $6-\omega$ neighbours in $C_{4,1}\cup A_5$. Moreover, no pairs of vertices in $B_{2,3}$ can have a common neighbour in $C_{4,1}\cup A_5$ since $G$ is diamond-free.

\begin{claim}\label{clm:Sfree-1}
$A_2\cup A_3$ is empty.
\end{claim}
\begin{proof}
By symmetry suppose that $a\in A_2$. Then $A_5$ is empty and $b_1$ has a neighbour $c\in C_{4,1}$. Then $\{b_2,v_2,a,c,v_4,v_5\}$ or $\{a,v_2,b_1,c,v_4,v_5\}$ induces a $P_6$, depending on whether $a$ and $c$ are adjacent.
\end{proof}

\begin{claim}\label{clm:Sfree-2}
$C_{5,2}\cup C_{3,5}$ and $A_5$ are anti-complete.
\end{claim}
\begin{proof}
By symmetry, it suffices to show that $C_{5,2}$ and $A_5$ are anti-complete. Suppose that $c\in C_{5,2}$ and $a\in A_5$ are adjacent. Let $Q'=\{c,v_2,v_3,v_4,v_5\}$. Then $N_{Q'}(b_1)=N_{Q'}(b_2)=\{v_2,v_3\}$, $N_{Q'}(a)=\{c,v_5\}$, moreover $N_{Q'}(b_3)=\{v_2,v_3\}$ if $b_3$ exists. This contradicts with \autoref{clm:B45empty}.
\end{proof}

\begin{claim}\label{clm:Sfree-3}
If $a\in A_5$ has a neighbour in $B_{2,3}$, then $a$ has no neighbours in $A_5$.
\end{claim}
\begin{proof}
Suppose that $a_1,a_2\in A_5$ such that $a_1b_1,a_1a_2\in E$. Then $a_2b_1,a_1b_2\notin E$, moreover $a_1b_3\notin E$ if $b_3$ exists. Let $Q'=\{a_1,b_1,v_2,v_1,v_5\}$. Then $N_{Q'}(b_2)=N_{Q'}(v_3)=\{b_1,v_2\}$, $N_{Q'}(a_2)=\{a_1,v_5\}$, moreover $N_{Q'}(b_3)=\{b_1,v_2\}$ if $b_3$ exists. This contradicts with \autoref{clm:B45empty}.
\end{proof}

\begin{claim}\label{clm:Sfree-4}
If $a\in A_5$ has a neighbour $b\in B_{2,3}$, then $a$ is complete to $C_{4,1}\setminus N_{C_{4,1}}(b)$.
\end{claim}
\begin{proof}
Suppose that $c\in C_{4,1}$ such that $ac,bc\notin E$, then $\{c,v_4,v_5,a,b,v_2\}$ induces a $P_6$.
\end{proof}

\begin{claim}\label{clm:Sfree-5}
If $c\in C_{4,1}$ has a neighbour $b\in B_{2,3}$, then $c$ is complete to $A_5\setminus N_{A_5}(b)$, and $A_5$ is stable.
\end{claim}
\begin{proof}
Suppose that $a\in A_5$ such that $ac,ab\notin E$, then $\{v_3,b,c,v_1,v_5,a\}$ induces a $P_6$. So $c$ is complete to $A_5\setminus N_{A_5}(b)$. Suppose that $a_1,a_2\in A_5$ such that $a_1a_2\in E$. Then $a_1b,a_2b\notin E$ by \autoref{clm:Sfree-3}, and so $a_1c,a_2c\in E$. Now $\{a_1,a_2,c,v_5\}$ induces a diamond.
\end{proof}

\begin{claim}\label{clm:Sfree-6}
$C_{2,4}\cup C_{1,3}$ and $A_5$ are complete.
\end{claim}
\begin{proof}
By symmetry, it suffices to show that $C_{2,4}$ and $A_5$ are complete. Suppose that $c\in C_{2,4}$ and $a\in A_5$ are nonadjacent. Since $a$ has at most one neighbour in $B_{2,3}$, we may assume that $b_1a\notin E$. Then $\{b_1,v_2,c,v_4,v_5,a\}$ induces a $P_6$.
\end{proof}

\begin{claim}\label{clm:Sfree-7}
If there are at least two vertices in $B_{2,3}$ which have a neighbour in $C_{4,1}$, then $C_{4,1}$ and $C_{5,2}\cup C_{3,5}$ are complete.
\end{claim}
\begin{proof}
By symmetry, it suffices to show that $C_{4,1}$ and $C_{5,2}$ are complete. Let $c_1,c_2\in C_{4,1}$ such that $c_1b_1,c_2b_2\in E$, $c_3\in C_{5,2}$. Suppose that $c_3c_1,c_3c_2\notin E$, then $\{c_1,v_4,c_2,b_2,v_2,c_3\}$ induces a $P_6$. So $c_3$ is adjacent to at least one of $c_1$ and $c_2$. By symmetry assume that $c_1c_3\in E$. Let $c_4$ be an arbitrary vertex in $C_{4,1}\setminus\{c_1\}$ with $c_4c_3\notin E$. Then $\{c_4,b_2,b_1,c_1,c_3,v_5\}$ or $\{c_4,v_4,c_1,c_3,v_2,b_2\}$ induces a $P_6$, depending on whether $c_4$ and $b_2$ are adjacent.
\end{proof}

\begin{claim}\label{clm:Sfree-8}
If there are at least two vertices in $B_{2,3}$ which have a neighbour in $C_{4,1}$, then $C_{4,1}$ and $C_{2,4}\cup C_{1,3}$ are anti-complete.
\end{claim}
\begin{proof}
Let both $b_1$ and $b_2$ have a neighbour in $C_{4,1}$. By symmetry, it suffices to show that $C_{4,1}$ and $C_{2,4}$ are anti-complete. Let $c_1\in C_{2,4}$. Since $G$ is diamond-free, $c_1$ has at most one neighbour in $C_{4,1}$. Suppose that $c_2\in C_{4,1}$ is adjacent to $c_1$. Since $c_2$ is adjacent to at most one vertex in $\{b_1,b_2\}$, assume that $b_1c_2\notin E$. Let $c_3\in C_{4,1}$ be a neighbour of $b_1$. Then $\{c_1,c_2,v_1,c_3,b_1,v_3\}$ induces a $P_6$.
\end{proof}

Now we give a 5-colouring of $G$ as follows.

\case{1} There are at least two vertices in $B_{2,3}$ which have a neighbour in $C_{4,1}$.

By \autoref{clm:Sfree-5}, $A_5$ is stable.

$\bullet$ Colour $v_1,\ldots,v_5$ with colours $4,3,4,2,1$ in order.

$\bullet$ Colour $b_1,b_2,b_3$ with colours 1,2,5 in order.

$\bullet$ Colour $A_5$ with colour 4.

$\bullet$ Colour $C_{1,3},C_{2,4},C_{3,5},C_{4,1},C_{5,2}$ with colours $3,1,2,3,2$ in order. By \autoref{clm:Sfree-8}, $C_{1,3}$ and $C_{4,1}$ do not conflict. Note that $C_{4,1}$ contains a $2K_1$. So $C_{5,2}$ and $C_{3,5}$ are anti-complete by \autoref{clm:Sfree-7} since $G$ is diamond-free.

$\bullet$ Colour $F$ with colour 5. Note that $F_5$ is empty, so $F$ and $B_{2,3}$ do not conflict by~\ref{item:17}.

$\bullet$ Let $K$ be an arbitrary component of $Z$. If $K$ contains only one vertex, colour it with colour 4. If $K$ contains at least two vertices, then either $K$ is 3-colourable if $K$ has no neighbours in $C$, or $K\cup N_C(K)$ is a clique of size at most 4 if $K$ has a neighbour in $C$ (cf.\ the proof of \autoref{clm:Z colouring}). So $K$ can be coloured with colours in $\{1,2,3,4\}$. Colour each component of $Z$ this way.

\case{2} There is at most one vertex in $B_{2,3}$ which has a neighbour in $C_{4,1}$.

Let $b_1$ have no neighbours in $C_{4,1}$. Moreover if $\omega=5$, let $b_2$ have no neighbours in $C_{4,1}$.

$\bullet$ Colour $v_1,\ldots,v_5$ with colours $4,3,4,3,1$ in order.

$\bullet$ Colour $b_1,b_2,b_3$ with colours $1,2,5$ in order.

$\bullet$ Colour each nontrivial component of $A_5$ with colours in $\{2,3,4,5\}$. Colour each trivial component of $A_5$ with colour 3. By \autoref{clm:Sfree-3}, $A_5$ and $B_{2,3}$ do not conflict.

$\bullet$ Colour $C_{1,3},C_{2,4},C_{3,5},C_{4,1},C_{5,2}$ with colours $1,1,3,1,4$ in order. By \autoref{clm:Sfree-2}, $C_{5,2}\cup C_{3,5}$ and $A_5$ do not conflict.

If $\omega=4$, then $b_1$ has at least two neighbours in $A_5$. If $\omega=5$, then both $b_1$ and $b_2$ have at least one neighbour in $A_5$. So there are two nonadjacent vertices $a_1,a_2\in A_5$ which are complete to $C_{4,1}$ by \autoref{clm:Sfree-3} and \autoref{clm:Sfree-4}. Note that $\{a_1,a_2\}$ and $C_{2,4}\cup C_{1,3}$ are complete by \autoref{clm:Sfree-6}. Then $C_{4,1}$, $C_{2,4}$ and $C_{1,3}$ are pairwise anti-complete.

$\bullet$ Colour $F$ with colour 5. By~\ref{item:14}, $F_2\cup F_3$ is anti-complete to $A_5$. If $a_1\in A_5$ with colour 5 and $f\in F_1\cup F_4$ are adjacent, by symmetry assume that $f\in F_1$. Let $a_2\in A_5$ be adjacent to $a_1$, then $a_2f\notin E$ by~\ref{item:16}, and so $\{a_2,a_1,f,v_1,v_2,b_1\}$ induces a $P_6$. So $A_5$ and $F$ do not conflict.

$\bullet$ The colouring of $Z$ is similar to Case 1.

So $G$ is 5-colourable, a contradiction.
\end{proof}

\begin{figure}[h!]
\centering
\begin{tikzpicture}[scale=0.6]
\tikzstyle{vertex}=[circle, draw, fill=black, inner sep=1pt, minimum size=5pt]
    \node[vertex, label=${v_1}$](1) at (0,2) {};
    \node[vertex, label=${v_2}$](2) at (-{2*cos(18)},{2*sin(18}) {};
    \node[vertex, label=below:${v_3}$](3) at (-{2*sin(36)},-{2*cos(36}) {};
    \node[vertex, label=below:${v_4}$](4) at ({2*sin(36)},-{2*cos(36}) {};
    \node[vertex, label=${v_5}$](5) at ({2*cos(18)},{2*sin(18}) {};
    \node[vertex, label=${b_1}$](6) at (-{3*cos(18)},-{3*sin(18}) {};
    \node[vertex, label=${b_2}$](7) at ({3*cos(18)},-{3*sin(18}) {};

    \Edge(1)(2)
    \Edge(2)(3)
    \Edge(3)(4)
    \Edge(4)(5)
    \Edge(1)(5)
    \Edge(2)(6)
    \Edge(3)(6)
    \Edge(4)(7)
    \Edge(5)(7)

\end{tikzpicture}
\caption{The graph $D_1$.}\label{fig:D1}
\end{figure}
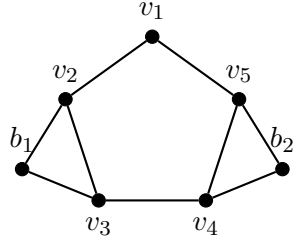

\begin{lemma}\label{lem:H4free}
Every 6-vertex-critical ($P_6$,diamond)-free graph with clique number 3 is $D_1$-free. (See \autoref{fig:D1} for $D_1$.)
\end{lemma}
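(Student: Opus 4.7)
The plan is to argue by contradiction. Suppose $G=(V,E)$ is a $6$-vertex-critical ($P_6$, diamond)-free graph with $\omega(G)=3$ containing an induced $D_1$; the goal is to exhibit a proper $5$-colouring of $G$, contradicting $\chi(G)=6$. Following the partition of \autoref{sec:imperfect}, write the $C_5$ of $D_1$ as $Q=\{v_1,\ldots,v_5\}$ and the two extra vertices as $b_1\in B_{2,3}$ and $b_2\in B_{4,5}$ with $b_1b_2\notin E$.

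I would first exploit $\omega(G)=3$. Since $B_{2,3}\cup\{v_2,v_3\}$ and $B_{4,5}\cup\{v_4,v_5\}$ are cliques of size at most $3$, we have $B_{2,3}=\{b_1\}$, $B_{4,5}=\{b_2\}$, and by~\ref{item:5} the remaining $B_{i,i+1}$ are empty. Any $f\in F_5$ would be adjacent to $v_2,v_3$ by definition and to $b_1$ by~\ref{item:17}, yielding the $K_4$ $\{f,v_2,v_3,b_1\}$; hence $F_5=\emptyset$, and symmetrically $F_2=\emptyset$. Each component of $A_i$ is a clique containing $v_i$ and so has size at most $2$, and combining~\ref{item:3} and~\ref{item:7} with the presence of $b_1$ and $b_2$ forbids many nontrivial-component configurations (for example, a nontrivial component of $A_1$ together with any vertex of $A_4$ would form a $K_3$ that becomes a $K_4$ once $b_1$ is attached). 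Next, I would adapt \autoref{clm:B45empty}: since $b_1b_2\notin E$, a neighbour $c\in C_{4,1}$ of $b_1$ yields the induced $P_6$ $\{v_3,b_1,c,v_1,v_5,b_2\}$, and combined with~\ref{item:12} this shows $b_1$ and $b_2$ have no neighbour in $C$ at all. Further ad hoc $P_6$/diamond arguments in the style of \autoref{clm:Sfree-2}--\autoref{clm:Sfree-8} (which must be reworked since those claims relied on $|B_{2,3}|\geq 2$, not available here) then pin down how $C$ and $F$ attach to the remaining $A$-sets.

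With the structure restricted in this way, I would split into a short list of subcases determined by which of $A_1,\ldots,A_5$ are empty (each nonempty $A_i$ is forced to be stable whenever $b_1$ or $b_2$ sees it on both sides), and in each case build an explicit $5$-colouring following the template of \autoref{lem:Hfree-45}: colour the $C_5$ using three colours, assign the two remaining colours to $\{b_1,b_2\}$, extend to $A$ using the matching structure of~\ref{item:10} and~\ref{item:16}, give each $C_{i,i+2}$ a single colour consistent with~\ref{item:22}--\ref{item:26}, pack $F\subseteq F_1\cup F_3\cup F_4$ onto one or two colours via~\ref{item:13}--\ref{item:18}, and finally colour $Z$ by invoking \autoref{clm:Z colouring}. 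The bound $\delta(G)\geq 5$ from \autoref{lem:critical} eliminates subcases in which too few $A_i$'s or $C_{i,i+2}$'s are populated. The main obstacle will be the tight colour budget: unlike \autoref{thm:imperfect}, where $\omega\geq 6$ colours are available, only $5$ may be used here, so colour reuse across $F$ and $C$ must be carefully coordinated with the choices on $b_1,b_2$ and on $A$. Once a proper $5$-colouring is produced in every subcase, $\chi(G)\leq 5$ contradicts $\chi(G)=6$, proving that $G$ is $D_1$-free.
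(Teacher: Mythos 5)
Your opening deductions are sound ($|B_{2,3}|=|B_{4,5}|=1$, $F_2=F_5=\emptyset$, and $b_1,b_2$ having no neighbours in $C$ --- the $P_6$ $\{v_3,b_1,c,v_1,v_5,b_2\}$ checks out), but the proposal then stops exactly where the work begins, and it heads in a direction the paper does not take. Two concrete problems. First, the heart of your plan --- ``ad hoc arguments pin down how $C$ and $F$ attach to the remaining $A$-sets'', followed by a case split and explicit $5$-colourings --- is entirely unexecuted. With only $5$ colours none of the colourings from \autoref{thm:imperfect} can be reused, and you supply no substitute for the structural claims (in the style of Claims~\ref{clm:D2free-1}--\ref{clm:D2free-6}, which the paper needs before it can $5$-colour the $D_2$ case) that such colourings would require; as written this is a programme, not a proof. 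Second, you cannot simply ``invoke \autoref{clm:Z colouring}'': its proof uses $\omega\geq6$ in an essential way (a component of $Z$ with two neighbours in $C$ and two colours on $F$ still has two colours left only ``since $\omega\geq6$''), which fails with a budget of $5$. This is precisely why the paper, in its genuine $5$-colouring arguments (Lemmas~\ref{lem:Hfree-45}, \ref{lem:H5free} and~\ref{lem:H1free}), reproves the extension to $Z$ separately, using~\ref{item:26} and a sharper accounting of which colour avoids which part of $C\cup F$.

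The paper's actual proof of this lemma is much shorter and needs no colouring at all. After establishing $C_{1,3}\cup C_{4,1}\cup A_3\cup A_4\cup F_2\cup F_5=\emptyset$ and $|A_1|\leq1$ (two vertices of $A_1$ would be comparable), the degree bound $\delta(G)\geq5$ from \autoref{lem:critical} forces $b_2$ to have two neighbours $a_1,a_2\in A_2$; four short $P_6$/diamond arguments then show that $a_1$ and $a_2$ have identical neighbourhoods in $C_{2,4}\cup C_{3,5}\cup F_1\cup F_3$, hence are comparable vertices --- contradicting \autoref{lem:critical} directly. Your preliminary steps set up exactly this situation (your observation that $b_1$ and $b_2$ have no neighbours in $C$ is what makes the degree count bite), so the natural repair is to drop the colouring altogether and finish with the comparable-vertices contradiction.
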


\begin{proof}
Let $G=(V,E)$ be a 6-vertex-critical ($P_6$,diamond)-free graph with clique number 3 containing a $D_1$. In the proof of this lemma, we follow the partitioning of $V$ from \autoref{sec:imperfect}. Let the $C_5$ in the $D_1$ be $Q$, and the other two vertices $b_1\in B_{2,3}$, $b_2\in B_{4,5}$ such that $b_1b_2\notin E$. By $\omega(G)=3$, $B=\{b_1,b_2\}$.

\begin{claim}\label{clm:H4free-C13C41}
$C_{1,3}\cup C_{4,1}\cup A_3\cup A_4\cup F_2\cup F_5$ is empty.
\end{claim}
\begin{proof}
By symmetry, it suffices to show that $C_{4,1}\cup A_4\cup F_2$ is empty. Suppose that $c\in C_{4,1}$, then $\{v_3,b_1,c,v_1,v_5,b_2\}$ or $\{b_1,v_2,v_1,c,v_4,b_2\}$ induces a $P_6$, depending on whether $b_1$ and $c$ are adjacent. Suppose that $a\in A_4$, then $\{v_1,v_2,b_1,a,v_4,b_2\}$ induces a $P_6$. Suppose that $f\in F_2$, then $\{b_2,v_4,v_5,f\}$ induces a $K_4$ or a diamond.
\end{proof}

\begin{claim}\label{clm:H4free-A1}
$A_1$ contains at most one vertex.
\end{claim}
\begin{proof}
Suppose that $a_1,a_2\in A_1$. By the properties from \autoref{sec:imperfect}, $A_1$ is stable, complete to $B_{2,3}\cup B_{4,5}\cup C_{5,2}\cup C_{2,4}\cup C_{3,5}$ and anti-complete to $A_2\cup A_5\cup F_3\cup F_4\cup Z$.

Suppose that $f\in F_1$ such that $a_1f\in E$, then $a_2f\notin E$ as the graph is diamond-free, then $\{a_2,b_1,a_1,f,v_4,v_5\}$ induces a $P_6$. So $A_1$ is anti-complete to $F_1$. Then $a_1$ and $a_2$ are comparable.
\end{proof}

Then by $\delta(G)\geq5$, $b_1$ has at least two neighbours in $A_5$, and $b_2$ has at least two neighbours in $A_2$. Let $a_1,a_2\in N_{A_2}(b_2)$. By the properties from \autoref{sec:imperfect}, $A_2$ and $A_5$ are stable, and $A_2$ is complete to $A_5\cup B_{4,5}$ and anti-complete to $A_1\cup B_{2,3}\cup C_{5,2}\cup F_4\cup Z$. Now we prove that $a_1$ and $a_2$ are comparable by showing that their neighbourhoods in $C_{2,4}\cup C_{3,5}\cup F_1\cup F_3$ are equal.


$\bullet$ $\{a_1,a_2\}$ is anti-complete to $C_{2,4}$: Suppose that $c\in C_{2,4}$ such that $a_1c\in E$, then $c$ has no neighbours in $A_2\cup A_5$ other than $a_1$ because of the diamond-freeness. Let $a_3\in A_5$, then $\{a_2,a_3,a_1,c,v_4,v_3\}$ induces a $P_6$.

$\bullet$ For each vertex $c\in C_{3,5}$, either $a_1c,a_2c\in E$ or $a_1c,a_2c\notin E$: Suppose that $a_1c\in E$, $a_2c\notin E$. Let $a_3\in A_5$, then $a_3c\notin E$, then $\{a_2,a_3,a_1,c,v_3,v_4\}$ induces a $P_6$.

$\bullet$ $\{a_1,a_2\}$ is complete to $F_1$: Suppose that $f\in F_1$ such that $a_1f\notin E$, then $\{a_1,b_2,v_5,v_1,f,v_3\}$ induces a $P_6$.

$\bullet$ $\{a_1,a_2\}$ is complete to $F_3$: Suppose that $f\in F_3$ such that $a_1f\notin E$, then $\{a_1,b_2,v_5,f,v_3,b_1\}$ induces a $P_6$.

Then $a_1$ and $a_2$ are comparable. This completes the proof of \autoref{lem:H4free}.
\end{proof}

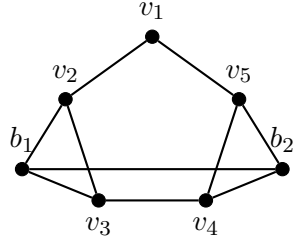
\begin{figure}[h!]
\centering
\begin{tikzpicture}[scale=0.6]
\tikzstyle{vertex}=[circle, draw, fill=black, inner sep=1pt, minimum size=5pt]
    \node[vertex, label=${v_1}$](1) at (0,2) {};
    \node[vertex, label=${v_2}$](2) at (-{2*cos(18)},{2*sin(18}) {};
    \node[vertex, label=below:${v_3}$](3) at (-{2*sin(36)},-{2*cos(36}) {};
    \node[vertex, label=below:${v_4}$](4) at ({2*sin(36)},-{2*cos(36}) {};
    \node[vertex, label=${v_5}$](5) at ({2*cos(18)},{2*sin(18}) {};
    \node[vertex, label=${b_1}$](6) at (-{3*cos(18)},-{3*sin(18}) {};
    \node[vertex, label=${b_2}$](7) at ({3*cos(18)},-{3*sin(18}) {};

    \Edge(1)(2)
    \Edge(2)(3)
    \Edge(3)(4)
    \Edge(4)(5)
    \Edge(1)(5)
    \Edge(2)(6)
    \Edge(3)(6)
    \Edge(4)(7)
    \Edge(5)(7)
    \Edge(6)(7)

\end{tikzpicture}
\caption{The graph $D_2$.}\label{fig:D2}
\end{figure}

\begin{lemma}\label{lem:H5free}
Every 6-vertex-critical ($P_6$,diamond)-free graph with clique number 3 is $D_2$-free. (See \autoref{fig:D2} for $D_2$.)
\end{lemma}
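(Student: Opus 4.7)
The plan is to mirror the proof of Lemma~\ref{lem:H4free} as closely as possible, so I begin by letting $G$ be a $6$-vertex-critical $(P_6,\text{diamond})$-free graph with $\omega(G)=3$ containing a $D_2$, and following the partition of \autoref{sec:imperfect} with the $C_5$ of $D_2$ taken as $Q$, $b_1\in B_{2,3}$, $b_2\in B_{4,5}$, and the extra edge $b_1b_2\in E$. Since $B_{2,3}$ is a clique by \ref{item:4} and complete to $\{v_2,v_3\}$, any second vertex of $B_{2,3}$ would form a $K_4$ with $\{v_2,v_3,b_1\}$; hence $B_{2,3}=\{b_1\}$ and symmetrically $B_{4,5}=\{b_2\}$. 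By \ref{item:7} the set $A_1$ is complete to the edge $b_1b_2$, so $A_1$ must be stable, for an edge in $A_1$ together with $\{b_1,b_2\}$ would produce a $K_4$. A similar diamond argument using $v_3$ (resp.\ $v_4$) shows that $A_3$ (resp.\ $A_4$) is stable.

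Next I would cut down the remaining structure in the same spirit as \autoref{clm:H4free-C13C41}, trying to empty $C_{1,3}, C_{4,1}, F_2, F_5, A_3, A_4$. The main technical hurdle — and the one new feature compared to the $D_1$ case — is that $b_1b_2$ now serves as a chord in every induced path that uses both $b_1$ and $b_2$ at non-consecutive positions. Therefore the induced $P_6$'s I can actually build through $D_2$ must either put $b_1,b_2$ consecutively (paths of the shape $\{x,y,b_1,b_2,z,w\}$) or use only one of $b_1,b_2$ together with structure inside the $A_i$'s. For instance, for $a\in A_4$ and $a_2\in A_2$ with $a_2b_2\notin E$, the set $\{a_2,a,b_1,b_2,v_5,v_1\}$ induces a $P_6$; combined with diamond-freeness this forces $A_2\subseteq N(b_2)$ whenever $A_4\neq\emptyset$, and then a further diamond $\{a,a_2,a_2',b_2\}$ forces $A_2$ stable. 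Analogous arguments, symmetric under the $D_2$-preserving involution $(v_1,v_2,v_3,v_4,v_5,b_1,b_2)\mapsto(v_1,v_5,v_4,v_3,v_2,b_2,b_1)$, severely constrain $A_3,A_5$, the $C_{i,i+2}$'s adjacent to $\{b_1,b_2\}$, and $F_2,F_5$.

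Once these reductions have been pushed as far as possible, I would invoke the degree condition $\delta(G)\geq5$ from \autoref{lem:critical} at $b_1$ and $b_2$. Since $N(b_1)\subseteq\{v_2,v_3,b_2\}\cup A_1\cup A_4\cup N_{C_{4,1}}(b_1)\cup F_5$ with $|F_5|\leq 1$ by \ref{item:13} (and symmetrically for $b_2$), only a handful of configurations survive. In each of them the plan, exactly as in \autoref{lem:H4free}, is to locate two comparable vertices — most naturally two vertices of $N_{A_2}(b_2)$, or of $A_1$, or of $N_{C_{4,1}}(b_1)$ — and verify by a case check on the properties in \autoref{sec:imperfect} together with $P_6$- and diamond-freeness that their neighbourhoods in $Q\cup A\cup B\cup C\cup F\cup Z$ coincide. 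This contradicts \autoref{lem:critical}. The principal obstacle throughout is the chord $b_1b_2$: it kills almost every $P_6$ construction that worked in the $D_1$ case, so the emptiness claims have to be re-established via paths of the two restricted shapes above, leading to a somewhat more delicate case analysis than in \autoref{lem:H4free}.
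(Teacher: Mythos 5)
Your proposal is a plan rather than a proof, and the two places where it remains a plan are exactly where it breaks. First, the structural phase: you aim to ``empty $C_{1,3}$, $C_{4,1}$, $F_2$, $F_5$, $A_3$, $A_4$'' in the spirit of \autoref{clm:H4free-C13C41}, but in the $D_2$ configuration $C_{4,1}$ and $C_{1,3}$ need \emph{not} be empty -- the $P_6$'s that killed them in the $D_1$ case all acquire the chord $b_1b_2$, as you yourself observe. What survives is a different conclusion: the path $\{v_3,b_1,b_2,v_5,v_1,c\}$ forces $B_{2,3}$ to be \emph{complete} to $C_{4,1}$ (and $B_{4,5}$ to $C_{1,3}$), and $\{a,v_2,b_1,c,v_4,v_5\}$ forces $A_2$ to be complete to $C_{4,1}$ (cf.\ \autoref{clm:D2free-2} and \autoref{clm:D2free-3}). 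So the chord does not merely make the case analysis ``more delicate''; it reverses the outcome, and your stated target for this phase is unattainable. Similarly, your deduction that $A_2$ is stable when $A_4\neq\emptyset$ is correct but far short of what is needed: one must show $A_2$ and $A_5$ are stable unconditionally (\autoref{clm:D2free-5}), which requires a separate degree argument at $b_1$.

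Second, and more seriously, the endgame does not close. Once the correct structure is in place, \emph{every} $A_i$ is stable, and the degree bound $d(b_1)\geq 5$ can be met by, say, one vertex in each of $A_1$, $A_4$ and $N_{C_{4,1}}(b_1)$; no two of these are comparable, and two vertices of $C_{4,1}$ (both complete to $\{v_1,v_4,b_1\}\cup A_2$) can still differ on $A_1$, on the other $C$-sets, on $F$ and on $Z$, so they are not comparable either. This is presumably why the paper abandons the comparable-vertex strategy here and instead finishes by exhibiting an explicit $5$-colouring of $G$ in three cases (splitting on whether $A_3\cup A_4$ is empty and on whether both $A_2$ and $A_5$ are nonempty), contradicting $6$-vertex-criticality directly. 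That colouring -- including the extension to $Z$ via a variant of \autoref{clm:Z colouring} -- is the essential missing ingredient in your proposal; ``locate two comparable vertices in the handful of surviving configurations'' is not something you verify, and in this case it appears to be false.
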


\begin{proof}
Let $G=(V,E)$ be a 6-vertex-critical ($P_6$,diamond)-free graph with clique number 3 containing a $D_2$. In the proof of this lemma, we follow the partitioning of $V$ in \autoref{sec:imperfect}. Let the $C_5$ in the $D_2$ be $Q$, and the other two vertices $b_1\in B_{2,3}$, $b_2\in B_{4,5}$ such that $b_1b_2\in E$. Then $B=\{b_1,b_2\}$.
By the properties from \autoref{sec:imperfect} and $\omega(G)=3$, $A_1$, $A_3$ and $A_4$ are stable, and $F_2\cup F_5$ is empty.

\begin{claim}\label{clm:D2free-1}
One of $A_1$ and $A_3\cup A_4$ is empty.
\end{claim}
\begin{proof}
Suppose that $a_1\in A_1$, $a_2\in A_3$, then $\{a_1,a_2,b_1,b_2\}$ induces a diamond.
\end{proof}

\begin{claim}\label{clm:D2free-2}
$B_{2,3}$ and $C_{4,1}$, $B_{4,5}$ and $C_{1,3}$ are complete.
\end{claim}
\begin{proof}
Suppose that $b\in B_{2,3}$ and $c\in C_{4,1}$ are nonadjacent, then $\{v_3,b_1,b_2,v_5,v_1,c\}$ induces a $P_6$.
\end{proof}

\begin{claim}\label{clm:D2free-3}
$A_2$ and $C_{4,1}$, $A_5$ and $C_{1,3}$ are complete.
\end{claim}
\begin{proof}
Suppose that $a\in A_2$ and $c\in C_{4,1}$ are nonadjacent, then $\{a,v_2,b_1,c,v_4,v_5\}$ induces a $P_6$.
\end{proof}

\begin{claim}\label{clm:D2free-4}
If $A_1$ is nonempty, then $A_2$ and $B_{4,5}$, $A_5$ and $B_{2,3}$ are complete.
\end{claim}
\begin{proof}
By symmetry let $a_1\in A_2$ such that $a_1b_2\notin E$. If $a_2\in A_1$, then $\{a_1,v_2,v_1,a_2,b_2,v_4\}$ induces a $P_6$.
\end{proof}

\begin{claim}\label{clm:D2free-5}
$A_2$ and $A_5$ are stable.
\end{claim}
\begin{proof}
By symmetry suppose that $A_2$ is nonstable. Then $A_4\cup A_5$ is empty. If $a\in A_1$, then $A_2\cup\{v_2,b_2\}$ contains a diamond by \autoref{clm:D2free-4}. If $c\in C_{4,1}$, then $A_2\cup\{v_2,c\}$ contains a diamond by \autoref{clm:D2free-3}. So $A_1\cup C_{4,1}$ is empty. Since $b_1$ has two neighbours $v_2,v_3$ in $Q$, a neighbour $b_2$, and no neighbours in $A\cup C\cup F$, $d(b_1)\leq3$.
\end{proof}

\begin{claim}\label{clm:D2free-6}
$A_3$ and $C_{1,3}$, $A_4$ and $C_{4,1}$ are anti-complete.
\end{claim}
\begin{proof}
By symmetry suppose that $a\in A_3$, $c\in C_{1,3}$ such that $ac\in E$. By \autoref{clm:D2free-2}, $cb_2\in E$. Then $\{a,c,b_2,v_3\}$ induces a diamond.
\end{proof}

Now we give a 5-colouring of $G$ as follows.

Suppose that $V\setminus Z$ has a 5-colouring such that one colour of $1,\ldots,5$ is not used in $C_{1,3}\cup C_{4,1}\cup F$, and $F$ is coloured with at most 2 colours, then we can extend it to a 5-colouring of $G$ as follows. For each component $K$ of $Z$, if $K$ is trivial, colour it with the colour not used in $C_{1,3}\cup C_{4,1}\cup F$, since $Z$ and $C_{2,4}\cup C_{3,5}\cup C_{5,2}$ are anti-complete by~\ref{item:26}. If $K$ is nontrivial, then either $K\cup N_C(K)$ is a clique of size at most 3 if $K$ has a neighbour in $C$, or $K$ is 3-colourable if $K$ has no neighbours in $C$, then $K$ can be coloured with the three colours not used in $F$. Therefore, it remains to give a colouring of $V\setminus Z$, so that one colour of $1,\ldots,5$ is not used in $C_{1,3}\cup C_{4,1}\cup F$, and $F$ is coloured with at most 2 colours.

\case{1} $A_3\cup A_4$ is empty.

\case{1.1} One of $A_2$ and $A_5$ is empty.

By symmetry assume that $A_5$ is empty.

$\bullet$ Colour $v_1,\ldots,v_5$ with colours $2,3,1,3,1$ in order.

$\bullet$ Colour $A_1$ and $A_2$ with colour $1$.

$\bullet$ Colour $B_{2,3}$ with colour $5$, $B_{4,5}$ with colour $2$.

$\bullet$ Colour $C_{1,3},C_{3,5},C_{4,1},C_{5,2}$ with colours $3,5,4,2$ in order. For each vertex in $C_{2,4}$, colour it with colour 1 if it has a neighbour in $C_{5,2}$, otherwise colour it with colour 2. If there is a vertex in $C_{2,4}$ with colour 1, then $A_1$ is empty by~\ref{item:23}. If $a\in A_2$ and $c_1\in C_{2,4}$ with colour 1 are adjacent, then there is a vertex $c_2\in C_{5,2}$ adjacent to $c_1$, and so $\{a,v_2,c_1,c_2\}$ induces a diamond or $K_4$. So $C$ and $A$ do not conflict.

$\bullet$ Colour $F_1,F_3,F_4$ with colours $5,5,4$ in order.

\case{1.2} Both $A_2$ and $A_5$ are nonempty.

Each vertex in $C_{2,4}\cup C_{3,5}$ is complete to one of $A_2$ and $A_5$, and anti-complete to the other. The proof can be reviewed in Case 1.3.2.3 of \autoref{thm:imperfect}, \autoref{sec:bound}.

$\bullet$ Colour $v_1,\ldots,v_5$ with colours $2,3,4,3,5$ in order.

$\bullet$ Colour $A_1,A_2,A_5$ with colours $4,1,2$ in order.

$\bullet$ Colour $B_{2,3}$ with colour $1$, $B_{4,5}$ with colour $2$.

$\bullet$ Colour $C_{1,3}$ with colour 3, $C_{4,1}$ with colour 5. Colour $C_{5,2}$ with colour 2 if $A_1$ is nonempty, otherwise colour $C_{5,2}$ with colour 4. Colour the vertices in $C_{2,4}\cup C_{3,5}$ which are adjacent to $A_2$ with colour $2$, and those adjacent to $A_5$ with colour $1$. Note that $C_{5,2}$ is anti-complete to $A_5$ by~\ref{item:25}, and anti-complete to $C_{2,4}\cup C_{5,2}$ when $A_1$ is nonempty by~\ref{item:23}.

$\bullet$ Colour $F_1,F_3,F_4$ with colours $5,3,5$ in order.

\case{2} $A_3\cup A_4$ is nonempty.

Then $A_1$ is empty by \autoref{clm:D2free-1}. By symmetry assume that $A_3$ is nonempty.

$\bullet$ Colour $v_1,\ldots,v_5$ with colours $1,4,3,1,5$ in order.

$\bullet$ Colour $A_2,A_3,A_4,A_5$ with colours $1,1,2,3$ in order.

$\bullet$ Colour $B_{2,3}$ with colour $1$, $B_{4,5}$ with colour $2$.

$\bullet$ Colour $C_{1,3},C_{2,4},C_{3,5},C_{4,1},C_{5,2}$ with colours $5,2,4,2,3$ in order. Note that $C_{2,4}$ and $C_{4,1}$ are anti-complete by~\ref{item:23}, $C_{2,4}$ and $A_4$ are anti-complete by~\ref{item:25}, $C_{4,1}$ and $A_4$ are anti-complete by \autoref{clm:D2free-6}.

$\bullet$ Colour $F_1,F_3,F_4$ with colours $5,4,5$ in order.

So $G$ is 5-colourable, a contradiction.
\end{proof}

\begin{lemma}\label{lem:H1free}
Every 6-vertex-critical ($P_6$,diamond)-free graph with clique number 3 is $S_1$-free. (See \autoref{fig:Sn} for $S_1$.)
\end{lemma}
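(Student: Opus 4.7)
The plan is to argue along the lines of \autoref{lem:H4free} and \autoref{lem:H5free}. Assume for contradiction that $G$ is a 6-vertex-critical ($P_6$, diamond)-free graph with $\omega(G)=3$ containing an induced $S_1$. Partition $V$ as in \autoref{sec:imperfect} using the $C_5$ of the $S_1$ as $Q=\{v_1,\ldots,v_5\}$ and the extra vertex as $b_1\in B_{2,3}$.

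First I would collapse $B$. By~\ref{item:4} together with $\omega(G)=3$, the clique $B_{2,3}\cup\{v_2,v_3\}$ has size at most $3$, so $B_{2,3}=\{b_1\}$. Any vertex $b_2\in B_{4,5}$ would, together with $Q\cup\{b_1\}$, induce a $D_2$ if $b_1b_2\in E$ and a $D_1$ otherwise, contradicting \autoref{lem:H5free} or \autoref{lem:H4free}. Hence $B=\{b_1\}$. By~\ref{item:19} every $A_i$ is stable, and by~\ref{item:13} each $|F_i|\leq 1$.

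Next I would pin down the adjacencies among $A$, $C$ and $F$ by chasing induced $P_6$'s and diamonds through the $b_1$-anchored structure, in the spirit of the claims inside \autoref{lem:Hfree-45}. The key observations are that $b_1$ is complete to $A_1\cup A_4$ by~\ref{item:7}, anti-complete to $A_2\cup A_3$ by~\ref{item:6}, has no neighbour in $F\setminus F_5$ by~\ref{item:17}, and no neighbour in $Z$ by~\ref{item:21}. Consequently $d(b_1)=2+|A_1|+|A_4|+|N_{C_{4,1}}(b_1)|+|F_5|$, which together with $\delta(G)\geq 5$ from \autoref{lem:critical} forces at least three of $A_1$, $A_4$, $N_{C_{4,1}}(b_1)$, $F_5$ to be nonempty. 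Complementary degree and comparability arguments applied to $v_1,v_4$ and to any pair of vertices in $A_1$, $A_4$ or $A_5$ should further restrict the configuration, typically by ruling out coexistence of $A_1$ and $A_3\cup A_4$, and by limiting how $C_{5,2}\cup C_{3,5}$ meets $A_5$ (as in \autoref{clm:A5C52}).

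Finally, in each surviving configuration I would either exhibit a pair of comparable vertices (for instance, two vertices of $A_1$ whose neighbourhoods in $C\cup F$ agree, as in the comparability step at the end of \autoref{lem:H4free}) or construct an explicit $5$-colouring of $V\setminus Z$ in which one colour is avoided on $C\cup F$ and $F$ uses at most two colours, and then extend the colouring to $Z$ using \autoref{clm:Z colouring}. The case split is naturally driven by which of $A_1,\ldots,A_5$ are empty and should mirror Cases~1--2 of \autoref{lem:H5free}. The main obstacle is the bookkeeping in the $5$-colouring: with $\omega=3$ the palette is tight, so each sub-case must exploit the derived completeness and anti-completeness relations carefully; but the $\omega=3$ restriction together with the $S_1$ structure should leave only a handful of genuinely distinct sub-cases, each amenable to a direct colouring analogous to Cases~1.1 and~1.2 of \autoref{lem:H5free}.
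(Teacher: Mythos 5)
Your overall strategy matches the paper's: reduce $B$ to $\{b_1\}$ via the $D_1$- and $D_2$-freeness from \autoref{lem:H4free} and \autoref{lem:H5free}, then finish by case analysis and explicit $5$-colourings extended to $Z$. However, there are concrete errors that derail the execution. First, the claim that ``by~\ref{item:19} every $A_i$ is stable'' is false. Property~\ref{item:19} applied with $B_{2,3}\neq\emptyset$ only forces $A_1$ and $A_4$ to be stable; it says nothing about $A_2$, $A_3$ or $A_5$, and indeed the paper's proof must devote three of its four main cases precisely to the situations where $A_2$, $A_3$ or $A_5$ is nonstable (there the components of the nonstable set are $K_2$'s needing two colours, which changes the colourings substantially and requires extra structural claims, e.g.\ that $C_{1,3}\cup C_{2,4}$ is empty when $A_5$ is nonstable). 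Second, your degree count $d(b_1)=2+|A_1|+|A_4|+|N_{C_{4,1}}(b_1)|+|F_5|$ omits the possible neighbours of $b_1$ in $A_5$: nothing in properties~\ref{item:6}--\ref{item:7} excludes edges between $B_{2,3}$ and $A_5$, and~\ref{item:24} in fact forces $B_{2,3}$ to be complete to $A_5$ whenever $A_2\cup A_3\neq\emptyset$. (Also $F_5=\emptyset$ automatically, since a vertex of $F_5$ together with $v_2,v_3,b_1$ gives a $K_4$.) So the conclusion that ``at least three of those sets are nonempty'' is unsound, and it is the hinge on which your proposed case split turns.

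Beyond these errors, the proposal leaves the genuinely hard content unexecuted: the structural claims that make the tight $5$-colour palette work (that $C_{5,2}\cup C_{3,5}$ is anti-complete to $A_5$, that $A_1\cup A_4$ is anti-complete to $C_{4,1}$, and the conditional recolourings of $C_{1,3}$ and $C_{4,1}$ depending on adjacencies within $C$), the six explicit colourings, and the verification that one colour is avoided on $C_{1,3}\cup C_{2,4}\cup C_{4,1}\cup F$ so that $Z$ can be handled via~\ref{item:26}. As written, the argument does not yet constitute a proof; to repair it you need to drop the ``all $A_i$ stable'' shortcut, organise the cases by which of $A_2,A_3,A_5$ is nonstable (mirroring Case~2 of \autoref{thm:imperfect} rather than \autoref{lem:H5free}), and supply the colourings in each case.
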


\begin{proof}
Let $G=(V,E)$ be a 6-vertex-critical ($P_6$,diamond)-free graph with clique number 3 containing an $S_1$. In the proof of this lemma, we follow the partitioning of $V$ from \autoref{sec:imperfect}. Let the $C_5$ in the $S_1$ be $Q$, and the other vertex $b$ in $B_{2,3}$. By \autoref{lem:H4free} and \autoref{lem:H5free}, $G$ is ($D_1$,$D_2$)-free, and so $B=\{b\}$. Since $\omega(G)=3$, $F_5$ is empty.

\begin{claim}\label{clm:S1free-1}
$C_{5,2}\cup C_{3,5}$ and $A_5$ are anti-complete.
\end{claim}
\begin{proof}
By symmetry suppose that $c\in C_{5,2}$ and $a\in A_5$ are adjacent. Then $\{c,v_2,v_3,v_4,v_5,b,a\}$ induces a $D_1$ or $D_2$.
\end{proof}

\begin{claim}\label{clm:S1free-2}
$A_1\cup A_4$ and $C_{4,1}$ are anti-complete.
\end{claim}
\begin{proof}
By symmetry suppose that $a\in A_1$ and $c\in C_{4,1}$ are adjacent. By~\ref{item:7}, $ab\in E$. Then either $\{a,b,c,v_1\}$ induces a diamond, or $\{v_1,v_2,v_3,v_4,c,b,a\}$ induces a $D_2$, depending on whether $c$ and $b$ are adjacent.
\end{proof}

Now we give a 5-colouring of $G$ as follows.

Suppose that one colour of $1,\ldots,5$ is not used in $C_{1,3}\cup C_{2,4}\cup C_{4,1}\cup F$, and $F$ is coloured with at most 2 colours. For each component $K$ of $Z$, if $K$ is trivial, colour it with the colour not used in $C_{1,3}\cup C_{2,4}\cup C_{4,1}\cup F$, since $Z$ and $C_{3,5}\cup C_{5,2}$ are anti-complete by~\ref{item:26}. If $K$ is nontrivial, then either $K\cup N_C(K)$ is a clique of size at most 3 if $K$ has a neighbour in $C$, or $K$ is 3-colourable if $K$ has no neighbours in $C$, then $K$ can be coloured with the three colours not used in $F$. Now we give a colouring of $V\setminus Z$, so that one colour of $1,\ldots,5$ is not used in $C_{1,3}\cup C_{2,4}\cup C_{4,1}\cup F$, and $F$ is coloured with at most 2 colours. We consider four cases which are similar to the subcases of Case 2 in \autoref{sec:bound}.

\case{1} Both $A_2$ and $A_3$ are nonstable.

$\bullet$ Colour $v_1,\ldots,v_5$ with colours $1,3,1,3,2$ in order.

$\bullet$ Colour $A_2$ with colours in $\{1,2\}$ so that the trivial components are coloured with colour~$1$. Colour $A_3$ with colours in $\{2,3\}$ so that the trivial components are coloured with colour $2$.

$\bullet$ Colour $b$ with colour $2$.

$\bullet$ Colour $C_{2,4},C_{3,5},C_{4,1},C_{5,2}$ with colours $4,3,5,1$ in order. For each vertex in $C_{1,3}$, colour it with colour $2$ if it has a neighbour in $C_{3,5}$, otherwise colour it with colour $3$.

$\bullet$ Colour $F_1,F_2,F_3,F_4$ with colours $5,5,5,5$ in order.

\case{2} Exactly one of $A_2$ and $A_3$ is nonstable.

By symmetry assume that $A_2$ is nonstable. If $A_1$ is empty, then it reduces to Case 1. Now suppose that $A_1$ is nonempty.

$\bullet$ Colour $v_1,\ldots,v_5$ with colours $1,4,2,1,2$ in order.

$\bullet$ Colour $A_2$ with colours in $\{1,3\}$ so that the trivial components are coloured with colour~$1$. Colour $A_1$ with colour $2$, $A_3$ with colour $1$.

$\bullet$ Colour $b$ with colour $1$.

$\bullet$ Colour $C_{1,3},C_{2,4},C_{3,5},C_{4,1},C_{5,2}$ with colours $4,3,5,2,3$ in order. By \autoref{clm:S1free-2}, $A_1$ and $C_{4,1}$ do not conflict.

$\bullet$ Colour $F_1,F_2,F_3,F_4$ with colours $5,5,5,3$ in order.

\case{3} $A_5$ is nonstable.

Now we prove that $C_{1,3}\cup C_{2,4}$ is empty. By symmetry suppose that $c\in C_{1,3}$. Since $A_5$ is nonstable, let $a_1,a_2\in A_5$ such that $a_1a_2\in E$. Then at least one of $a_1$ and $a_2$ is nonadjacent to $b_1$, assume that $a_1b\notin E$. Then $ca_1\in E$, or else $\{b,v_3,c,v_1,v_5,a_1\}$ induces a $P_6$. So $a_2b\in E$, or else $ca_2\in E$ and then $\{c,a_1,c_2,v_5\}$ induces a diamond. Then $\{b,v_2,v_1,v_5,a_2,a_1,v_3\}$ induces a $D_1$. So $C_{1,3}\cup C_{2,4}$ is empty.

$\bullet$ Colour $v_1,\ldots,v_5$ with colours $4,2,4,1,5$ in order.

$\bullet$ Colour $A_5$ with colours in $\{1,2\}$ so that the trivial components are coloured with colour~$1$. Colour $A_1$ with colour $1$, $A_4$ with colour $2$.

$\bullet$ Colour $b$ with colour $3$.

$\bullet$ Colour $C_{3,5},C_{4,1},C_{5,2}$ with colours $3,5,4$ in order.

$\bullet$ Colour $F_1,F_2,F_3,F_4$ with colours $5,3,3,5$ in order.

\case{4} None of $A_1,\ldots,A_5$ is nonstable.

\case{4.1} $A_1\cup A_4$ is nonempty.

By symmetry assume that $A_4$ is nonempty.

$\bullet$ Colour $v_1,\ldots,v_5$ with colours $2,3,1,2,3$ in order.

$\bullet$ Colour $A_1,\ldots,A_5$ with colours $1,2,2,3,1$ in order.

$\bullet$ Colour $b$ with colour $4$.

$\bullet$ Colour $C_{1,3},C_{2,4},C_{3,5},C_{4,1}$ with colours $5,4,5,3$ in order. If $A_1$ is empty, then colour $C_{5,2}$ with colour 1, otherwise colour $C_{5,2}$ with colour 4. By~\ref{item:23}, $C_{2,4}$ and $C_{5,2}$, $C_{1,3}$ and $C_{3,5}$ are anti-complete. By \autoref{clm:S1free-1}, $C_{5,2}$ and $A_5$ do not conflict. By \autoref{clm:S1free-2}, $A_4$ and $C_{4,1}$ do not conflict.

$\bullet$ Colour $F_1,F_2,F_3,F_4$ with colours $5,4,5,4$ in order.

\case{4.2} $A_1\cup A_4$ is empty.

$\bullet$ Colour $v_1,\ldots,v_5$ with colours $1,2,1,2,4$ in order.

$\bullet$ Colour $A_2,A_3,A_5$ with colours $1,2,3$ in order.

$\bullet$ Colour $b$ with colour $5$.

$\bullet$ Colour $C_{1,3},C_{2,4},C_{3,5},C_{5,2}$ with colours $4,5,2,1$ in order. For each vertex in $C_{4,1}$, colour it with colour $3$ if it has a neighbour in $C_{1,3}$, otherwise colour it with colour $4$. Suppose that $c_1\in C_{4,1}$ with colour $3$ is adjacent to $a\in A_5$, then there is a vertex $c_2\in C_{1,3}$ adjacent to $c_1$, then either $\{v_1,c_1,c_2,a\}$ induces a diamond, or $\{v_2,v_3,c_2,c_1,a,v_5\}$ induces a $P_6$, depending on whether $a$ and $c_2$ are adjacent. So $C_{4,1}$ and $A_5$ do not conflict.

$\bullet$ Colour $F_1,F_2,F_3,F_4$ with colours $5,5,3,5$ in order.

So $G$ is 5-colourable, a contradiction.
\end{proof}

\begin{lemma}\label{lem:domi}
Every 6-vertex-critical ($P_6$,diamond)-free graph with clique number $\omega$ ($\omega=3,4,5$) is ($K_{\omega}+K_1$)-free.
\end{lemma}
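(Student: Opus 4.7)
My plan is to argue by contradiction: suppose $G$ contains an induced $K_\omega + K_1$, consisting of a clique $K = \{x_1, \ldots, x_\omega\}$ together with a vertex $v$ satisfying $v \notin K$ and $v$ nonadjacent to every $x_i$. Because $G$ is diamond-free and $K$ is a maximum clique, every vertex of $V(G) \setminus K$ has at most one neighbour in $K$ (a second neighbour would either extend $K$ or form a diamond). Combined with $\delta(G) \geq 5$ from \autoref{lem:critical}, this yields $|N(x_i) \setminus K| \geq 6 - \omega$ for each $x_i$ and $|N(v)| \geq 5$ with $N(v) \cap K = \emptyset$. Since $G$ has no clique cutset, $G - K$ is connected, so there is a path in $G - K$ from $v$ to $N(K) \setminus K$.

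The first step is to bound the distance from $v$ to $K$. Let $v = u_0, u_1, \ldots, u_d$ be a shortest $(v, K)$-path in $G$ with $u_d \in K$; minimality forces $u_0, \ldots, u_{d - 2}$ to be nonadjacent to $K$, and $u_{d - 1}$ to be adjacent to only $u_d$ in $K$. For any $x' \in K \setminus \{u_d\}$ the extended sequence $u_0, u_1, \ldots, u_d, x'$ is then an induced path on $d + 2$ vertices, so $P_6$-freeness forces $d \in \{2, 3\}$.

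The second step is a case analysis indexed by $(\omega, d) \in \{3, 4, 5\} \times \{2, 3\}$. In each case I would try to extend the induced path on either the $v$-end, using a neighbour of $v$ distinct from and nonadjacent to $u_1, \ldots, u_d, x'$, or on the $K$-end, using an external neighbour of $x'$ distinct from $u_{d - 1}$ and nonadjacent to $u_0, \ldots, u_{d - 1}$, to obtain an induced $P_6$. If no such extension is possible, the resulting rigidity says that every neighbour of $v$ lies in $N[u_1] \cup N[u_2]$ and every external neighbour of each $x' \in K \setminus \{u_d\}$ lies in $N[u_0] \cup \cdots \cup N[u_{d - 1}]$; combining this with diamond-freeness and the forbidden $S_{\omega - 2}$ (for $\omega = 4, 5$, from \autoref{lem:Hfree-45}) or the forbidden $S_1, D_1, D_2$ (for $\omega = 3$, from \autoref{lem:H1free}, \autoref{lem:H4free}, \autoref{lem:H5free}) should either exhibit a clique cutset of $G$, produce a pair of comparable vertices, or allow an explicit $5$-colouring of $G$ in the palette-swapping style of the preceding proofs, contradicting \autoref{lem:critical} in the first two alternatives or $6$-vertex-criticality in the third.

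The main obstacle will be this case analysis, particularly the case $\omega = 3$, where each $x_i$ has at least three external neighbours and both $N(v)$ and $\bigcup_i N(x_i) \setminus K$ are therefore large, leaving many near-miss configurations. The role of the forbidden $S_1, D_1, D_2$ of \autoref{lem:H1free}, \autoref{lem:H4free} and \autoref{lem:H5free} will be analogous to their role in the proofs of those lemmas: they eliminate precisely the configurations that resist a direct $P_6$ or diamond argument, and are needed to push the rigidity constraints to an outright contradiction with \autoref{lem:critical}. I expect the case $\omega = 5$ to be shortest (tightest degree constraints and least flexibility) and $\omega = 3$ to be longest, with each further splitting on the two possible values of $d$.
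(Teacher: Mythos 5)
Your setup is sound: the observation that every vertex outside the maximum clique $K$ has at most one neighbour in $K$, the connectivity of $G-K$, and the bound $d\in\{2,3\}$ on the distance from $v$ to $K$ (obtained by appending a second clique vertex to a shortest $(v,K)$-path) are all correct, and shortest-path arguments of this kind do appear in the paper's proof. But everything after that is a declaration of intent rather than an argument: ``I would try to extend the induced path'', ``the resulting rigidity \ldots{} should either exhibit a clique cutset \ldots{}, produce a pair of comparable vertices, or allow an explicit $5$-colouring''. That case analysis \emph{is} the proof, and nothing in the proposal shows it closes. In particular, for $d=2$ the induced path $v,u_1,u_2,x'$ has only four vertices, so two independent extensions are needed; the ``rigidity'' you extract from a failed extension (every neighbour of $v$ lies in $N[u_1]\cup N[u_2]$, every external neighbour of $x'$ lies in $N[u_0]\cup\cdots\cup N[u_{d-1}]$) is far from forcing a clique cutset or a comparable pair on its own, and you give no indication of how $S_{\omega-2}$-, $S_1$-, $D_1$- or $D_2$-freeness would be brought to bear, nor of what the alleged $5$-colouring would look like. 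As it stands this is an outline with the essential content deferred, not a proof.

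For comparison, the paper does not case on $d$ and $\omega$ separately. It fixes the clique $A=\{a_1,\dots,a_\omega\}$, partitions the rest of $V$ into $B_i=N(a_i)\setminus A$ and $C=V\setminus(A\cup B)$, and shows $C=\emptyset$ in three steps: (i) a vertex of $C$ with a neighbour in every $B_i$ yields a $K_{\omega+1}$, because $S_{\omega-2}$-freeness forces those neighbours to be pairwise adjacent; (ii) a vertex $c_1\in C$ with neighbours in two distinct $B_i$'s is ruled out by first showing $N_B(c_1)$ is a clique, so that $G-N_B(c_1)$ is still connected and a shortest path from $c_1$ to $A$ in \emph{that} graph has length exactly $3$, after which a short diamond/$P_6$ analysis eliminates every configuration; (iii) once each vertex of $C$ sees at most one $B_i$, all vertices in a component of $C$ must have the same neighbourhood in $B$ (else a $P_6$ appears), and that common neighbourhood is either a clique cutset or yields a diamond. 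The device in step (ii) --- deleting the clique $N_B(c_1)$ before taking the shortest path, so that the path is forced to leave through $C$ --- is the idea your plan lacks; without something playing that role, the ``extend at both ends'' strategy has no mechanism to terminate. You would need to actually carry out and verify a case analysis of comparable depth before this could be accepted.
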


\begin{proof}
Let $G=(V,E)$ be a 6-vertex-critical ($P_6$,diamond)-free graph with clique number $\omega$. By \autoref{lem:Hfree-45} and \autoref{lem:H1free}, $G$ is $S_{\omega-2}$-free. Let $A=\{a_1,\ldots,a_{\omega}\}$ be a $K_{\omega}$ in $G$. Let $B_i=\{v\in V\setminus A: va_i\in E\}$ for $i=1,\ldots,\omega$, $B=\bigcup_{i=1}^{\omega}B_i$, $C=V\setminus(A\cup B)$. Since $A$ is an arbitrary $K_{\omega}$, we just need to prove that $C$ is empty.

Since $G$ is (diamond,$K_{\omega+1}$)-free and $\delta(G)\geq5$, we have some obvious properties of $B$:

$\bullet$ $B_1,\ldots,B_{\omega}$ are pairwise disjoint.

$\bullet$ Each component of $B_i$ is a clique of size at most $\omega-1$.

$\bullet$ Every $b\in B_i$ has at most one neighbour in each component of $B_j$ for $i\neq j$.

$\bullet$ Each $B_i$ contains at least $6-\omega$ vertices.

\begin{claim}\label{clm:domi-antiB}
Each vertex in $C$ is anti-complete to $B_i$ for some $i\in\{1,\ldots,\omega\}$.
\end{claim}
\begin{proof}
Let $c$ be an arbitrary vertex in $C$. Since every $B_i$ is nonempty, suppose that $\{b_1,\ldots,b_{\omega}\}\subseteq N_B(c)$ such that $b_j\in B_j$ for $j=1,\ldots,\omega$. Since $G$ is $S_{\omega-2}$-free, $b_1,\ldots,b_{\omega}$ are pairwise adjacent. Then $\{b_1,\ldots,b_{\omega},c\}$ induces a $K_{\omega+1}$.
\end{proof}


\begin{claim}\label{clm:domi-Cijfree}
No vertices in $C$ have a neighbour in $B_i$ and a neighbour $B_j$ for $i\neq j$.
\end{claim}
\begin{proof}
Suppose that $c_1\in C$ has two neighbours $b_1\in B_1$ and $b_2\in B_2$. Since $G$ is $S_{\omega-2}$-free, $b_1b_2\in E$. Moreover, for each neighbour $b$ of $c_1$ in $B$, $bb_1\in E$ if $b\notin B_1$, and $bb_2\in E$ if $b\notin B_2$. So $N_B(c_1)$ is connected, and forms a clique since $G$ is diamond-free. Since $G$ has no clique cutsets, $G-N_B(c_1)$ is connected. Assume that $c_1,c_2,\ldots,c_n,b_3,a_i$ $(n\geq2)$ is a shortest path between $c_1$ and $A$ in $G-N_B(c_1)$. Then $c_2,\ldots,c_n\in C$. If $n\geq3$, then $\{c_1,c_2,\ldots,c_n,b_3,a_i,a_j\}$ contains a $P_6$ for some $j$. So $n=2$.

Since $\{c_1,c_2,b_1,b_2\}$ cannot induce a diamond, either $c_2b_1,c_2b_2\in E$ or $c_2b_1,c_2b_2\notin E$. Assume that $b_1c_2,b_2c_2\in E$. By symmetry assume that $b_3\notin B_1$. Then $b_1b_3\in E$, and so $\{c_1,c_2,b_1,b_3\}$ induces a diamond. So $b_1c_2,b_2c_2\notin E$. Since $\{c_1,b_1,b_2,b_3\}$ cannot induce a diamond, $b_3$ is nonadjacent to at least one of $b_1$ and $b_2$. If $b_1b_3\notin E$, then $b_3\in B_1$, or else $\{b_1,c_1,c_2,b_3,a_i,a_j\}$ induces a $P_6$ for some $i,j$. Similarly, if $b_2b_3\notin E$, then $b_3\in B_2$. So by symmetry we may assume that $b_3\in B_1$, $b_1b_3\notin E$ and $b_2b_3\in E$.

By \autoref{clm:domi-antiB} we may assume that $b_4\in B_3$ is nonadjacent to $c_1$. If $c_2b_4\in E$, then $b_3b_4\in E$. Then either $\{c_2,b_2,b_3,b_4\}$ induces a diamond, or $\{b_4,c_2,c_1,b_2,a_2,a_1\}$ induces a $P_6$, depending on whether $b_2$ and $b_4$ are adjacent. So $c_2b_4\notin E$. Then $\{c_2,c_1,b_2,b_4,a_3,a_1\}$ or $\{c_2,c_1,b_2,a_2,a_3,b_4\}$ induces a $P_6$, depending on whether $b_2$ and $b_4$ are adjacent.
\end{proof}

Now suppose that $c_1\in C$ has a neighbour in $b_1\in B_1$, then $c_1$ has no neighbours in $B\setminus B_1$ by \autoref{clm:domi-Cijfree}. If $c_1$ has no neighbours in $C$, then $N_G(c_1)\subseteq N_G(a_1)$. So $c_1$ has a neighbour $c_2\in C$. This implies that $C$ has no trivial components. By \autoref{clm:domi-Cijfree} we may assume that $b_2\in B_2$ is nonadjacent to $c_2$. If $c_2b_1\notin E$, then $\{c_2,c_1,b_1,b_2,a_2,a_3\}$ or $\{c_2,c_1,b_1,a_1,a_2,b_2\}$ induces a $P_6$, depending on whether $b_1$ and $b_2$ are adjacent. So $N_B(c_2)\subseteq N_B(c_1)$. Moreover by symmetry and transitivity, every vertex in a component $K$ of $C$ has the same neighbourhood in $B$. Then either $N_B(K)$ is a clique cutset, or $K\cup N_B(K)$ contains a diamond. So $C$ is empty.
\end{proof}

\subsection{Proof of \autoref{thm:omega4}}\label{ssc:omega4}

Let $G=(V,E)$ be a 6-vertex-critical ($P_6$,diamond)-free graph with clique number 4. Let $A=\{a_1,a_2,a_3,a_4\}$ be a $K_4$ in $G$, $B_i=\{v\in V\setminus A: va_i\in E\}$ for $i=1,2,3,4$, $B=\bigcup_{i=1}^{4}B_i$. By \autoref{lem:domi}, $G$ is ($K_4+K_1$)-free. So $V=A\cup B$ and we may use the properties of $B$ mentioned in \autoref{lem:domi}.

\begin{claim}\label{clm:omega4-3bn2c}
$B\setminus B_i$ is not 2-colourable for $i=1,2,3,4$.
\end{claim}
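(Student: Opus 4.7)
The plan is to argue by contradiction using 6-vertex-criticality: since $\chi(G)=6$, exhibiting any proper 5-colouring of $G$ would be a contradiction. By the symmetry of the labelling it suffices to treat $i=1$, so I will assume $B\setminus B_1 = B_2\cup B_3\cup B_4$ admits a partition into two independent sets $I_1,I_2$ and construct a 5-colouring of $G$.

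The construction I have in mind uses three disjoint colour palettes tailored to the three natural parts $A$, $B\setminus B_1$ and $B_1$. First, assign colour $j$ to $a_j$ for $j=1,2,3,4$. Second, colour $I_1$ with $1$ and $I_2$ with $5$. Third, colour $B_1$ using only the colours $\{2,3,4\}$. The third step is the one step that requires a structural input: by the listed properties of $B$ (proved at the start of Section~\ref{ssc:omega4}), each component of $B_1$ is a clique of size at most $\omega-1 = 3$, so $B_1$ is 3-colourable and the palette $\{2,3,4\}$ is large enough.

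It then remains to verify there are no monochromatic edges. The clique $A$ gets distinct colours; $B\setminus B_1$ is coloured properly by hypothesis; and $B_1$ is coloured properly by the component-clique argument. For edges between $A$ and $V\setminus A$: a vertex in $B_j$ with $j\geq 2$ is adjacent in $A$ only to $a_j$ (by disjointness of the $B_i$) and receives colour $1$ or $5$, so it avoids $j$; a vertex in $B_1$ is adjacent in $A$ only to $a_1$ and receives a colour in $\{2,3,4\}$, avoiding $1$; and $a_1$ is anti-complete to $B\setminus B_1$, so no edge from $a_1$ meets colour $1$ again. For edges within $B$: the palettes $\{2,3,4\}$ for $B_1$ and $\{1,5\}$ for $B\setminus B_1$ are disjoint, so any edge with one endpoint in each is automatically bichromatic. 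Hence the colouring is proper and uses at most five colours, contradicting $\chi(G)=6$.

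There is no real obstacle here beyond bookkeeping; the entire argument hinges on the fact that $a_1$ is the unique vertex of $A$ anti-complete to $B\setminus B_1$, which frees up colour $1$ to be used simultaneously on $a_1$ and on one class of the 2-colouring, and on the fact that the components of $B_1$ are small cliques so that only three colours are needed for $B_1$.
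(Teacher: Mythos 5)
Your proof is correct and is essentially identical to the paper's: the paper also colours $a_1$ with colour $1$, colours $B_1\cup\{a_2,a_3,a_4\}$ with $\{2,3,4\}$ (which amounts to your assignment of $j$ to $a_j$ plus the component-clique colouring of $B_1$), and colours $B_2\cup B_3\cup B_4$ with $\{1,5\}$. The verification details you spell out (disjointness of the $B_i$, anti-completeness of $a_1$ to $B\setminus B_1$, components of $B_1$ being cliques of size at most $3$) are exactly the facts the paper relies on implicitly.
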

\begin{proof}
By symmetry suppose that $B_2\cup B_3\cup B_4$ is 2-colourable. Then $G$ has a 5-colouring: colour $a_1$ with colour 1, $B_1\cup \{a_2,a_3,a_4\}$ with colours in $\{2,3,4\}$, and $B_2\cup B_3\cup B_4$ with colours in $\{1,5\}$.
\end{proof}

\begin{claim}\label{clm:omega4-2k3}
If $B_i$ contains a $K_3$, then $B_j$ contains no $K_3$ for $j\neq i$.
\end{claim}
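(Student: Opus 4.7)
The approach will be by contradiction: suppose $T_1 = \{b_1,b_2,b_3\} \subseteq B_1$ and $T_2 = \{b_1',b_2',b_3'\} \subseteq B_2$ are both $K_3$'s, and derive an induced $P_6$, contradicting the $P_6$-freeness of $G$.

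The first step will be to pin down the rigid structure forced by these two triangles. Since $\{a_1\} \cup T_1$ and $\{a_2\} \cup T_2$ are $K_4$'s and $\omega(G) = 4$, each of $a_3,a_4$ is either complete or anti-complete to $T_1$ (otherwise, together with $a_1$ and two appropriate $b_i$'s, one obtains an induced diamond), and the complete case would give a $K_5$; hence $a_3, a_4$ are anti-complete to $T_1$, and symmetrically to $T_2$. Next, I will argue that the bipartite edges between $T_1$ and $T_2$ form a perfect matching. The ``at most one neighbour per component'' property of $B$ (from diamond-freeness) forces at most one such edge per vertex of $T_1$, and similarly of $T_2$. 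For the lower bound, \autoref{lem:domi} tells us $G$ is $(K_4+K_1)$-free, so each $b_k$ has a neighbour in the $K_4$ $\{a_2\} \cup T_2$; since $b_k a_2 \notin E$ by disjointness of the $B_i$'s, that neighbour must lie in $T_2$, and symmetrically for each $b_k'$. After relabeling $T_2$, we may assume the matching is $b_k b_k' \in E$ for $k = 1,2,3$.

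Since each $B_i$ has at least $6-\omega = 2$ vertices, I pick $c \in B_3$. The same $(K_4+K_1)$-free and at-most-one-per-component reasoning shows $c$ has exactly one neighbour in $T_1$, which I may take to be $b_1$ after relabeling, and exactly one neighbour $b_\tau' \in T_2$. I then split into two cases and exhibit the forbidden $P_6$ explicitly. In the matched case $\tau = 1$ (so $c$ is adjacent to the matched pair $\{b_1, b_1'\}$), the candidate is $a_3 - c - b_1' - b_3' - b_3 - b_2$. In the unmatched case, WLOG $\tau = 2$, the candidate is $a_4 - a_1 - b_1 - c - b_2' - b_3'$. In each case, verifying the five path edges and checking that all of the roughly ten potential chords are non-edges is a direct application of the structural facts derived above: the disjointness of the $B_i$'s (which kills all $a_l$--$T_j$ edges except the forced ones and all $a_l$--$c$ edges except $a_3 c$), the anti-completeness of $a_3, a_4$ to $T_1 \cup T_2$, the matching pattern on $T_1 \cup T_2$, and the ``exactly one neighbour'' condition on $c$.

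The main obstacle will be the structural preliminaries --- especially the perfect-matching statement between $T_1$ and $T_2$ and the total anti-completeness of $a_3, a_4$ to $T_1 \cup T_2$ --- since each depends on a small but careful diamond or $K_5$ exclusion. Once these are in hand, the two induced $P_6$'s can be read off mechanically, so most of the write-up will consist of these preliminary observations rather than the final case split itself.
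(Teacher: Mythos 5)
Your proof is correct and follows essentially the same strategy as the paper: both use $(K_4+K_1)$-freeness and diamond-freeness to show that a vertex of a third $B_k$ has exactly one neighbour in each of the two triangles, and then exhibit an explicit induced $P_6$. The paper gets by with a single shorter configuration (the path $b_5$--$a_2$--$a_3$--$b_7$--$b_1$--$b_2$) and never needs your perfect-matching observation, but your two-case version is a valid, if slightly longer, variant of the same idea.
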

\begin{proof}
Let $K=\{b_1,b_2,b_3\}$ be a $K_3$ in $B_1$, and $L=\{b_4,b_5,b_6\}$ be a $K_3$ in $B_2$. Let $b_7\in B_3$. Since $G$ is (diamond,$K_4+K_1$)-free, $b_7$ has exactly one neighbour in $K$ and exactly one neighbour in $L$, assume that $b_1b_7,b_4b_7\in E$. At least one of $b_5$ and $b_6$ is nonadjacent to $b_1$, assume that $b_5b_1\notin E$. One of $b_2$ and $b_3$ is nonadjacent to $b_5$, assume that $b_2b_5\notin E$. Then $\{b_5,a_2,a_3,b_7,b_1,b_2\}$ induces a $P_6$.
\end{proof}

\begin{claim}\label{clm:omega4-bnc5}
$B$ contains no $C_5$.
\end{claim}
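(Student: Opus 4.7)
The plan is to assume for contradiction that $B$ contains an induced $C_5$, call it $Q = c_1 c_2 c_3 c_4 c_5$, and to derive a contradiction by placing each $a_i \in A$ within the partition of $V$ described in \autoref{sec:imperfect}, taken relative to $Q$. Since $G$ is diamond-free and $A$ is a $K_4$, each vertex of $B = V \setminus A$ is adjacent to exactly one element of $A$, so $B = B_1 \cup B_2 \cup B_3 \cup B_4$ disjointly and the sets $N_Q(a_i)$ partition $Q$ into at most four parts, each of size at most $3$ and containing no three cyclically consecutive vertices of $Q$ (otherwise $a_i$ together with them would form a diamond). Consequently each $a_i$ lies in exactly one of $\tilde Z$, $\tilde A_j$, $\tilde B_{j,j+1}$, $\tilde C_{j,j+2}$, or $\tilde F_j$ (using tildes to distinguish these \autoref{sec:imperfect} sets from the present section's $A$ and $B$).

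I would then carry out a case analysis on the multiset of sizes $(|N_Q(a_1)|, \ldots, |N_Q(a_4)|)$, whose entries are at most $3$ and sum to $5$. The possibilities are $(2,1,1,1)$, $(2,2,1,0)$, $(3,1,1,0)$, and $(3,2,0,0)$. In case $(2,1,1,1)$, three of the $a_i$'s lie in singleton $\tilde A$-sets, and since any three positions in $C_5$ include two cyclically consecutive, two of these $a_i$'s land in $\tilde A_j$ and $\tilde A_{j+1}$; these are anti-complete by \ref{item:2}, contradicting the edge $a_i a_j \in E$ in $K_4$. In cases $(2,2,1,0)$ and $(3,1,1,0)$, at least one $a_i$ lies in $\tilde Z$ while another lies in $\tilde A \cup \tilde B$, directly contradicting \ref{item:21}.

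The remaining case $(3,2,0,0)$ is the main obstacle. The size-$3$ part is forced to be $\tilde F_j = \{c_j, c_{j-2}, c_{j+2}\}$ for some $j$ (these are the only $3$-subsets of $C_5$ avoiding three consecutive), and its complement $\{c_{j-1}, c_{j+1}\}$ is a non-adjacent pair of $Q$, so it is of $\tilde C$-type. Without loss of generality, $a_1 \in \tilde F_1$, $a_2 \in \tilde C_{5,2}$, and $a_3, a_4 \in \tilde Z$. None of \ref{item:1}--\ref{item:21} rules this out directly, so I would combine the degree bound $\delta(G) \ge 5$, the $S_2$-freeness from \autoref{lem:Hfree-45}, and the earlier \autoref{clm:omega4-2k3} and \autoref{clm:omega4-3bn2c}. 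Concretely, $a_3$ and $a_4$ each have at least two neighbors in $B \setminus Q$, and \ref{item:21} forces every such neighbor into $\tilde C \cup \tilde F \cup \tilde Z$. I would then trace the consequences around the adjacent pair $c_3, c_4 \in B_1$: split into subcases depending on whether the $B_1$-component containing $\{c_3, c_4\}$ has size $2$ or $3$, use $S_2$-freeness to control the size-$3$ subcase (a third vertex $u$ of this component together with $a_1$ would form an $S_2$ unless $u$ is adjacent to some $c_k \in \{c_2, c_5\}$), and use $\delta(c_3) \geq 5$ to produce an extra neighbor $w$ of $c_3$ in $B \setminus Q$ whose \autoref{sec:imperfect} placement yields either an explicit induced $P_6$ through $Q$ or a violation of \autoref{clm:omega4-2k3} via an unwanted $K_3$ in some $B_k$.

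The hard part will be closing the $(3,2,0,0)$ case cleanly: since no single property of \autoref{sec:imperfect} suffices, the argument must interleave a subcase analysis of the $B_1$-component structure at $\{c_3, c_4\}$, the forced placements of the additional neighbors of $a_3$, $a_4$, $c_3$, and $c_4$, and repeated invocations of $P_6$- and $S_2$-freeness to exhaust every configuration.
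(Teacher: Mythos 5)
Your framework and your treatment of the distributions $(2,1,1,1)$, $(2,2,1,0)$ and $(3,1,1,0)$ are correct, and they amount to an expanded version of the paper's one-line reduction via \ref{item:2} and \ref{item:21} to the situation $Q\subseteq B_i\cup B_j$ with parts of sizes $3$ and $2$, i.e.\ your case $(3,2,0,0)$. The gap is in your plan for that remaining case. The concrete step you propose --- that an extra neighbour $w$ of $c_3$ in $B\setminus Q$ must yield ``either an explicit induced $P_6$ through $Q$ or a violation of \autoref{clm:omega4-2k3} via an unwanted $K_3$'' --- is false: a vertex $w\in B_3$ with $N_Q(w)=\{c_1,c_3,c_5\}$, whose two neighbours $c_1,c_3\in B_1\cap Q$ lie in different components of $B_1$, creates no diamond, no $P_6$, no $S_2$ and no triangle in any $B_k$; such vertices are exactly the ones the correct analysis shows are unavoidable. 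More generally, no local analysis of the $B_1$-component at $\{c_3,c_4\}$ and of $c_3$'s degree can close this case, because locally consistent configurations exist; the contradiction has to be global.

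The missing ingredient is an analysis of $B_3\cup B_4$, the classes attached to the two $K_4$-vertices you placed in $\tilde Z$. One first shows that every $u\in B_3\cup B_4$ has exactly three pairwise nonconsecutive neighbours on $Q$: zero neighbours, or exactly two nonconsecutive neighbours, give explicit induced $P_6$'s using $a_3$, $a_4$ and vertices of $Q$; one neighbour or two consecutive neighbours contradict \ref{item:21}, since $u$ is adjacent to $a_3$ or $a_4$, which lie in $\tilde Z$; three consecutive neighbours are excluded by diamond-freeness; and the one $\tilde F$-type set containing the edge of $Q$ inside $B_1$ is excluded because $u$ has at most one neighbour in that component of $B_1$. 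By \ref{item:13} the surviving vertices form a stable set meeting each of the four remaining $\tilde F$-classes at most once, so $|B_3|=|B_4|=2$ and $B_3\cup B_4$ is stable. Only now do \autoref{clm:omega4-3bn2c} and \autoref{clm:omega4-2k3} bite: $B_1\cup B_3\cup B_4$ is not $2$-colourable, hence ($G$ being $P_6$-free) contains a $K_3$ or an induced $C_5$; a $C_5$ there would force $|B_2|=2$ by the same structural analysis applied to it, while a $K_3$ must lie entirely inside $B_1$, whence $B_2\cup B_3\cup B_4$ is $K_3$-free, not $2$-colourable, and therefore contains a $C_5$ forcing $|B_1|=2$ --- both impossible. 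Your sketch names the two claims but never reaches the stability-and-counting statement about $B_3\cup B_4$ that makes them applicable, so the hard case remains open as written.
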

\begin{proof}
We use some properties from \autoref{sec:imperfect} to prove this claim. Let $Q$=\{$v_1,v_2,v_3,v_4,v_5$\} induce a $C_5$ in $B$ with edges $v_iv_{i+1}$ for $i=1,\ldots,5$, all indices modulo 5. By~\ref{item:2} and~\ref{item:21}, $Q\subseteq B_i\cup B_j$. Assume that $v_1,v_3\in B_1, v_2,v_4,v_5\in B_2$.

Let $u$ be an arbitrary vertex in $B_3\cup B_4$. By symmetry let $u\in B_3$. If $u$ has no neighbours in $Q$, then $\{u,a_3,a_1,v_1,v_5,v_4\}$ induces a $P_6$. By~\ref{item:21}, $u$ is adjacent to two or three nonsequential vertices in $Q$. If $u$ has exactly two neighbours $v_i$ and $v_{i+2}$ in $Q$, then $\{a_4,a_3,u,v_i,v_{i+4},v_{i+3}\}$ induces a $P_6$. Since $v_4$ and $v_5$ are in the same component of $B_2$, $N_Q(u)\neq\{v_2,v_4,v_5\}$. So $N_Q(u)=\{v_1,v_2,v_4\}$ or $\{v_1,v_3,v_4\}$ or $\{v_1,v_3,v_5\}$ or $\{v_2,v_3,v_5\}$. Note that each $B_i$ contains at least two vertices. Then by~\ref{item:13}, $B_3\cup B_4$ is stable, and each of $B_3$ and $B_4$ contains exactly two vertices.

By \autoref{clm:omega4-3bn2c}, $B_1\cup B_3\cup B_4$ contains a $K_3$ or a $C_5$. If $B_1\cup B_3\cup B_4$ contains a $C_5$, then $B_2$ contains exactly two vertices, a contradiction. So $B_1\cup B_3\cup B_4$ contains a $K_3$. Since there are no edges between $B_3$ and $B_4$, $B_1$ contains a $K_3$. Then $B_2\cup B_3\cup B_4$ contains no $K_3$ by \autoref{clm:omega4-2k3} and so contains a $C_5$. Then $B_1$ contains exactly two vertices, a contradiction.
\end{proof}

\begin{claim}\label{clm:omega4-k3-bhask4}
For every component $K$ of $B_i$ of size 3, there is a $K_4$ in $B$ containing one vertex in each of $B_1,\ldots,B_4$, and containing one vertex in $K$.
\end{claim}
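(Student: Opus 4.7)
My plan is to exploit the strong local constraints imposed by the $K_4$ $\{a_1,k_1,k_2,k_3\}$ together with \autoref{lem:critical} and $P_6$-freeness. Without loss of generality set $i=1$ and write $K=\{k_1,k_2,k_3\}$. First I would apply (diamond, $K_5$)-freeness to the $K_4$ $\{a_1,k_1,k_2,k_3\}$: any external vertex with exactly two or three neighbours in it produces a diamond, and four neighbours give a $K_5$. Thus every vertex outside the $K_4$ has at most one neighbour in it; specialising to $B_2\cup B_3\cup B_4$, whose members are nonadjacent to $a_1$, this forces every such vertex to have at most one neighbour in $K$. Combined with $\delta(G)\ge 5$ and the fact that the other components of $B_1$ are anti-complete to $K$, each $k_l$ has exactly three neighbours in $A\cup K$ (namely $a_1$ and the other two vertices of $K$) and therefore at least two more in $B_2\cup B_3\cup B_4$.

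Next I would show that $K$ has a neighbour in each of $B_2$, $B_3$, $B_4$. Assume for contradiction that $K$ is anti-complete to $B_4$, pick $b\in B_4$ (which exists because $|B_4|\ge 2$) and an external neighbour $b'$ of some $k_l$, without loss of generality $b'\in B_2$. Using the disjointness of the $B_j$'s and the at-most-one-neighbour-in-the-$K_4$ rule, it is a routine check that $b-a_4-a_1-k_l-b'$ is induced. Extending this to a forbidden $P_6$ at the $b'$-end uses $\delta(b')\ge 5$: the known neighbours $a_2, k_l$ account for only two, so $b'$ has at least three further neighbours, and the $0/1$-rule together with disjointness of the $B_j$'s rule out most locations that would make any such further neighbour adjacent to one of $b,a_4,a_1,k_l$.

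The final step is to promote this to the existence of a single $k_l\in K$ whose three neighbours $b_2\in B_2$, $b_3\in B_3$, $b_4\in B_4$ form a triangle, so that $\{k_l,b_2,b_3,b_4\}$ is the desired transversal $K_4$. I would first find such a $k_l$ by a pigeonhole argument: if every $k_l$ missed some class then two of them would miss the same class, and short paths through $A$ combined with their external neighbours would force a $P_6$. Given a $k_l$ with neighbours $b_2,b_3,b_4$ in the three classes, the key observation is that if, say, $b_2b_3\notin E$, then $\{a_2,b_2,k_l,b_3,a_3\}$ induces a $C_5$; extending this $C_5$ by a further neighbour of $a_3$ in $B_3$ (which exists since $|B_3|\ge 2$) and invoking the $0/1$-rule on $\{a_1,k_1,k_2,k_3\}$ together with $P_6$-freeness yields the contradiction.

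I expect the last paragraph---ruling out the non-edges among $b_2,b_3,b_4$---to be the main obstacle. A non-edge produces an induced $C_5$ anchored at $k_l$ and $A$, and eliminating it requires a sub-case analysis based on the local structure of each $b_j$'s neighbourhood and on which components of $B_2,B_3,B_4$ the $b_j$'s live in, rather than a single slick path argument.
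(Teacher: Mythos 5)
Your strategy is genuinely different from the paper's, and as written it has gaps that I do not see how to close. The paper does not grow the $K_4$ outward from $K$; it first produces a \emph{transversal triangle} $L=\{b_4,b_5,b_6\}$ with $b_4\in B_2$, $b_5\in B_3$, $b_6\in B_4$, by combining \autoref{clm:omega4-3bn2c} ($B_2\cup B_3\cup B_4$ is not $2$-colourable, hence contains an induced odd cycle, which must be a triangle since $B$ has no $C_5$ by \autoref{clm:omega4-bnc5} and longer odd holes contain a $P_6$) with \autoref{clm:omega4-2k3} and diamond-freeness (which force the triangle to meet each of $B_2,B_3,B_4$ exactly once). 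Once $L$ exists the rest is short: diamond-freeness gives each vertex of $K$ exactly $0$, $1$ or $3$ neighbours in $L$; the $(K_4+K_1)$-freeness of $G$ (\autoref{lem:domi}) applied to the $K_4$ $K\cup\{a_1\}$ forces $L$ to send edges into $K$; and a single six-vertex path through $a_3,a_4$ kills the ``exactly one neighbour'' case. Your proposal never establishes the existence of such an $L$ and makes no use of \autoref{clm:omega4-3bn2c}, \autoref{clm:omega4-2k3} or \autoref{clm:omega4-bnc5}, which is precisely the machinery that makes the claim a few lines long.

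The concrete gaps are these. (i) In the step showing $K$ meets every class, the path $b,a_4,a_1,k_l,b'$ is induced only if $bb'\notin E$, which is not guaranteed for an arbitrary $b\in B_4$: if $B_4$ consists of two trivial components both adjacent to $b'$, no valid $b$ exists for that choice of $b'$. The extension at the $b'$ end is also unjustified: a further neighbour $w$ of $b'$ must indeed avoid $B_1$ and $B_4$, but nothing in your ``at most one neighbour in the $K_4$'' rule prevents $w$ from being adjacent to $k_l$ or to $b$. (ii) The most serious gap is the final triangle step. If $b_2b_3\notin E$ then $\{a_2,b_2,k_l,b_3,a_3\}$ is indeed an induced $C_5$, but it contains the two adjacent vertices $a_2,a_3$ of $A$, so \autoref{clm:omega4-bnc5} does not apply to it, and it does not obviously extend to a $P_6$: every candidate sixth vertex of $B_3$ is adjacent to $a_3$, every vertex of $A$ is adjacent to both $a_2$ and $a_3$, and a second vertex of $K$ sees only $k_l$ on the cycle, which yields only a $P_5$. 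Note that the analogous ``neighbours in distinct classes are pairwise adjacent'' statement in \autoref{clm:domi-antiB} relies crucially on the central vertex having \emph{no} neighbour in $A$, so that all of $A$ can be attached to form an $S_{\omega-2}$; here $k_l$ is adjacent to $a_1$ and that argument breaks. I do not see how to complete your step without essentially re-deriving the transversal triangle, so I would switch to the paper's order of argument.
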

\begin{proof}
Assume that $K=\{b_1,b_2,b_3\}$ is a $K_3$ in $B_1$. By \autoref{clm:omega4-3bn2c}, \autoref{clm:omega4-2k3} and \autoref{clm:omega4-bnc5}, there are $b_4\in B_2, b_5\in B_3, b_6\in B_4$ such that $L=\{b_4,b_5,b_6\}$ is a $K_3$. Since $G$ is diamond-free, every vertex in $K$ cannot have exactly two neighbours in $L$. If some vertex in $K$ is complete to $L$, then we are done. So suppose that every vertex in $K$ has at most one neighbour in $L$. Suppose that $b_1b_4\in E$. By symmetry we may assume that $b_2b_5\notin E$. Then $\{b_2,b_1,b_4,b_5,a_3,a_4\}$ induces a $P_6$. So $K$ and $L$ are anti-complete, and then $K\cup\{a_1,b_4\}$ induces a $K_4+K_1$.
\end{proof}

\begin{claim}\label{clm:omega4-k2-bhask4}
If no $B_j$ contains a $K_3$ for $j=1,2,3,4$, then for every component $K$ of $B_i$ of size~2, there is a $K_4$ in $B$ containing one vertex in each of $B_1,\ldots,B_4$, and containing one vertex in $K$.
\end{claim}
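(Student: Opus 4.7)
The strategy mirrors \autoref{clm:omega4-k3-bhask4}, with the twist that because $K\cup\{a_1\}$ is only a $K_3$ rather than a $K_4$, the final ``$K_4+K_1$'' finishing step of that proof is unavailable; I will instead finish by constructing an induced $P_6$ through an external neighbour of $K$.

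WLOG write $K=\{b_1,b_2\}\subseteq B_1$. Invoking \autoref{clm:omega4-3bn2c} and \autoref{clm:omega4-bnc5} as in \autoref{clm:omega4-k3-bhask4}, combined with $P_6$-freeness (to exclude induced odd cycles of length $\geq 7$) and diamond-freeness (two adjacent vertices in $B_j$ with a common neighbour in $B_k$, together with $a_j$, form a diamond), I obtain a \emph{colourful} triangle $L=\{l_2,l_3,l_4\}$ with $l_j\in B_j$ inside $B\setminus B_1$. A short diamond-free check using that $K$ is a whole component of $B_1$ shows that $a_1$ is the unique common neighbour of $b_1$ and $b_2$ in $V\setminus K$. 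Assume for contradiction that no $K_4$ meeting $K$ has one vertex in each $B_j$. Then diamond-freeness combined with this hypothesis forces each $b\in K$ to have $0$ or $1$ neighbours in any triangle of $B\setminus B_1$. If, say, $b_1$ has exactly one neighbour $l_j$ in $L$, then $b_2l_j\notin E$ by the common-neighbour property (since $l_j\neq a_1$), and by the $B_3\leftrightarrow B_4$-symmetry I may after relabelling take some $k\in\{3,4\}\setminus\{j\}$ with $b_2l_k\notin E$, at which point the $P_6$ from the proof of \autoref{clm:omega4-k3-bhask4} (namely $b_2$-$b_1$-$l_j$-$l_k$-$a_k$-$a_{k'}$) goes through verbatim. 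Hence $K$ is anti-complete to $L$.

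For the finish, $\delta(G)\geq 5$ together with $|N(b_1)\cap(A\cup B_1)|=2$ yields some $c\in N(b_1)\cap B_j$ with $j\in\{2,3,4\}$. There must exist $k\in\{2,3,4\}\setminus\{j\}$ with $cl_k\notin E$: otherwise $\{c,l_{k_1},l_{k_2}\}$ with $\{k_1,k_2\}=\{2,3,4\}\setminus\{j\}$ is another colourful triangle in which $b_1$ has exactly one neighbour (namely $c$), which is the previous case. For this $k$,
\[
b_2 - b_1 - c - a_j - a_k - l_k
\]
induces a $P_6$: the chord $b_2c$ fails because $a_1$ is the unique common neighbour of $b_1,b_2$ outside $K$; the chords $b_1l_k,b_2l_k$ fail by the anti-completeness of $K$ to $L$; the chords $b_1a_j,b_1a_k,b_2a_j,b_2a_k,ca_k,a_jl_k$ all vanish because the $B_i$'s are pairwise disjoint; and $cl_k\notin E$ by the choice of $k$. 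This contradicts $P_6$-freeness. The main obstacle is precisely the loss of the ``$K_4+K_1$'' shortcut available when $|K|=3$: it is what forces me to bring in the external neighbour $c$ and to lean on the common-neighbour condition to kill off chords incident to $b_2$, which is otherwise ``free'' to be adjacent to many things in $B\setminus B_1$.
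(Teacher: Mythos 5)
Your proof is correct, and it reaches the anti-completeness of $K$ and $L$ exactly as the paper does: the same appeal to \autoref{clm:omega4-3bn2c} and \autoref{clm:omega4-bnc5} to produce the colourful triangle $L$, the same diamond count showing each vertex of $K$ has $0$ or $1$ neighbours in $L$ (with the ``complete to $L$'' case absorbed into your contradiction hypothesis), and the same $P_6$ of the form $b_2$--$b_1$--$l_j$--$l_k$--$a_k$--$a_{k'}$ ruling out exactly one neighbour. Where you genuinely diverge is the finish. The paper uses the absence of clique cutsets to get that $B$ is connected, takes a shortest $K$--$L$ path $b_1,u_1,\ldots,u_n,b_3$, and kills it by cases on $n$ and on the class of $u_1$, invoking $(K_4+K_1)$-freeness when $u_n$ is complete to $L$. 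You instead use $\delta(G)\geq 5$ together with $|N(b_1)\cap(A\cup B_1)|=2$ (valid since $K$ is a whole component of $B_1$) to obtain a neighbour $c\in B_j$ of $b_1$ with $j\neq 1$; if $c$ is complete to the other two classes of $L$ you manufacture a new colourful triangle in which $b_1$ has exactly one neighbour and reduce to the case already handled, and otherwise you exhibit the induced $P_6$ $b_2$--$b_1$--$c$--$a_j$--$a_k$--$l_k$, whose chords are excluded by disjointness of the $B_i$, anti-completeness of $K$ to $L$, and your correct observation that $a_1$ is the unique common neighbour of $b_1$ and $b_2$ outside $K$. This trades the connectivity/shortest-path machinery for the degree bound and avoids the $n\geq 2$ versus $n=1$ and $u_1\in B_2$ versus $u_1\notin B_2$ case split, at the cost of being specific to the degree slack available here (the paper's path template is the one reused for the $\omega=5$ analogue). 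One cosmetic slip: when choosing $k$ with $b_2l_k\notin E$ the correct range is $k\in\{2,3,4\}\setminus\{j\}$ rather than $\{3,4\}\setminus\{j\}$; the full symmetry on $\{2,3,4\}$, which you invoke via ``after relabelling'', makes this harmless.
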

\begin{proof}
Assume that no $B_j$ contains a $K_3$ for $j=1,2,3,4$, and $K=\{b_1,b_2\}$ is a $K_2$ in $B_1$. By \autoref{clm:omega4-3bn2c} and \autoref{clm:omega4-bnc5}, there are $b_3\in B_2, b_4\in B_3, b_5\in B_4$ such that $L=\{b_3,b_4,b_5\}$ is a $K_3$. Since $G$ is diamond-free, every vertex in $K$ cannot have exactly two neighbours in $L$. If some vertex in $K$ is complete to $L$, then we are done. So suppose that every vertex in $K$ has at most one neighbour in $L$. By a similar proof as for the proof of \autoref{clm:omega4-k3-bhask4} we have that $K$ and $L$ are anti-complete.

Since $A$ is not a clique cutset, $B$ is connected. Assume that $b_1,u_1,\ldots,u_n,b_3$ is a shortest path between $K$ and $L$ in $B$. Since $K$ is a component of $B_1$, $u_1\notin B_1$, and so $v_2u_1\notin E$. Note that $u_n$ is either complete to $L$, or nonadjacent to $b_4,b_5$ since $G$ is diamond-free. If $u_n$ is complete to $L$, then $L\cup\{u_n,b_2\}$ induces a $K_4+K_1$. So $u_nb_4,u_nb_5\notin E$. If $n\geq2$, then $\{b_2,b_1,u_1,\ldots,u_n,b_3,b_4\}$ contains a $P_6$. So $n=1$. If $u_1\in B_2$, then $\{b_2,b_1,u_1,b_3,b_4,a_3\}$ induces a $P_6$. If $u_1\notin B_2$, by symmetry assume that $u_1\notin B_3$, then $\{b_2,b_1,u_1,b_3,a_2,a_3\}$ induces a $P_6$.
\end{proof}

If every $B_i$ is stable for $i=1,2,3,4$, then $G$ is 4-colourable. So one of $B_1,\ldots,B_4$ contains a component of size 3 or 2. By \autoref{clm:omega4-k3-bhask4} and \autoref{clm:omega4-k2-bhask4}, we may assume that $A'=\{b_1,b_2,b_3,b_4\}$ is a $K_4$ in $B$ such that $b_i\in B_i$ for $i=1,2,3,4$, and there is a vertex $v_1$ adjacent to $a_1$ and $b_1$.

Since $G$ is ($K_4+K_1$,diamond)-free, every vertex in $V\setminus(A\cup A')$ is adjacent to exactly one vertex in $A$ and exactly one vertex in $A'$. Let $v_2$ be a neighbour of $v_1$ in $V\setminus(A\cup A')$. Because $G$ is diamond-free, $v_2$ is adjacent to $a_1$ if and only if it is adjacent to $b_1$. By $\omega(G)=4$, $v_1$ has at most one neighbour in $V\setminus (A\cup A')$ which is adjacent to $a_1$ and $b_1$. Now assume that $v_2$ is nonadjacent to $a_1$. There are two cases up to symmetry: $v_2a_2,v_2b_2\in E$ or $v_2a_2,v_2b_3\in E$. In either case, $\{b_4,b_1,v_1,v_2,a_2,a_3\}$ induces a $P_6$. So $v_1$ has no neighbours in $V\setminus(A\cup A')$ which are nonadjacent to $a_1$. Then $d(v_1)\leq3$, a contradiction. So there are no 6-vertex-critical ($P_6$,diamond)-free graphs with clique number 4.

\subsection{Proof of \autoref{thm:omega5}}\label{ssc:omega5}

Let $G=(V,E)$ be a 6-vertex-critical ($P_6$,diamond)-free graph with clique number 5. Let $A=\{a_1,a_2,a_3,a_4,a_5\}$ be a $K_5$ in $G$, $B_i=\{v\in V\setminus A: va_i\in E\}$ for $i=1,2,3,4,5$, $B=\bigcup_{i=1}^{5}B_i$. By \autoref{lem:domi}, $G$ is ($K_5+K_1$)-free. So $V=A \cup B$ and we may use the properties of $B$ mentioned in \autoref{lem:domi}.

\begin{claim}\label{clm:omega5-2k3}
If $B_i$ contains a $K_3$, then $B_j$ contains no $K_3$ for $j\neq i$.
\end{claim}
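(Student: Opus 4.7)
The plan is to mirror the argument of \autoref{clm:omega4-2k3} for the $\omega=4$ case, suitably adjusted to the looser forbidden-subgraph constraints available at $\omega=5$: here we only have $K_6$-freeness and $(K_5+K_1)$-freeness (from \autoref{lem:domi}) rather than $K_5$-freeness and $(K_4+K_1)$-freeness. Suppose for contradiction that $K=\{b_1,b_2,b_3\}\subseteq B_1$ and $L=\{b_4,b_5,b_6\}\subseteq B_2$ are both triangles. Since the $B_i$'s are pairwise disjoint, every vertex of $L$ is non-adjacent to $a_1$; two neighbours in $K$ would therefore form a diamond with $a_1$. So each vertex of $L$ has at most one neighbour in $K$, and symmetrically. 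Hence the bipartite graph between $K$ and $L$ is a matching of size at most three.

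By the properties of $B$ stated in \autoref{lem:domi}, $B_3$ is non-empty, so pick $b_7\in B_3$. The same diamond argument (with apex $a_1$ for $K$ and apex $a_2$ for $L$) gives $|N_K(b_7)|\le 1$ and $|N_L(b_7)|\le 1$. If $b_7$ actually has one neighbour in each of $K$ and $L$, I recycle the $P_6$ construction of \autoref{clm:omega4-2k3}: assuming $b_1b_7,b_4b_7\in E$, choose $b_5\in L\setminus N(b_1)$ and $b_2\in K\setminus N(b_5)$ (both exist because the $K$–$L$ matching gives each vertex at most one cross-partner), and check that $\{b_5,a_2,a_3,b_7,b_1,b_2\}$ induces a $P_6$ by the same edge/non-edge accounting used in the $\omega=4$ proof.

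The main obstacle, and the new wrinkle for $\omega=5$, is the possibility that $b_7$ has no neighbour in $K$ (or, symmetrically, none in $L$). At $\omega=4$ this was ruled out by $(K_4+K_1)$-freeness, but at $\omega=5$ the subgraph $K\cup\{a_1,b_7\}$ is merely a $K_4+K_1$, which is allowed. To dispose of this case I plan to proceed in two steps. First, note that every vertex of $B_3\cup B_4\cup B_5$ obeys the same $\le 1$ bound into $K$ and into $L$. I will either (i) use a counting/averaging argument combined with the $S_3$-freeness from \autoref{lem:Hfree-45} and the absence of clique cutsets and comparable vertices (\autoref{lem:critical}) to show that at least one vertex of $B_3\cup B_4\cup B_5$ must hit both $K$ and $L$, which reduces back to the case already handled; or (ii) if $b_7$ has $b_1\in K$ as a unique neighbour in $K$ but no neighbour in $L$, extend the path $b_7\text{–}b_1\text{–}a_1\text{–}a_2\text{–}b_4$ (choosing $b_4\in L$ appropriately) by a neighbour $u$ of $b_4$ in $B_4\cup B_5$ that avoids $b_1,b_7$; such a $u$ should exist because $\delta(G)\ge 5$ forces $b_4$ to have many neighbours while disjointness of the $B_i$'s and the matching structure limit how many of them can lie inside $K\cup L\cup A$. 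The fully anti-complete subcase (no neighbour in $K\cup L$) will be handled analogously by routing the path through a third clique-class vertex. Verifying this last extension is the technical heart, since the paper's framework forces a very constrained search for the sixth vertex of the $P_6$, and this is where I expect the proof to be the most delicate.
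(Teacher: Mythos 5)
Your handling of the case where some $b_7\in B_3\cup B_4\cup B_5$ meets both $K$ and $L$ is essentially the paper's opening move and is correct (modulo choosing $b_5\in L\setminus(N(b_1)\cup N(b_7))$ rather than merely $L\setminus N(b_1)$, so that $b_5b_7\notin E$ is guaranteed). But you have misdiagnosed where the difficulty lies. The case ``$b_7$ has a neighbour in $K$ but none in $L$'' is not a new wrinkle at all: the $P_6$ $\{b_5,a_2,a_3,b_7,b_1,b_2\}$ never uses an edge from $b_7$ into $L$ --- it only needs some vertex of $L$ nonadjacent to each of $b_7$, $b_1$ and $b_2$, which exists because $b_1$ and $b_7$ each have at most one neighbour in $L$ and $b_2$ can be chosen afterwards. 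So the very same construction shows that \emph{no} vertex of $B_3\cup B_4\cup B_5$ has \emph{any} neighbour in $K$ (and symmetrically in $L$); this is exactly how the paper's proof begins, by establishing that $K\cup L$ is anti-complete to $B\setminus(B_1\cup B_2)$.

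The genuinely new case at $\omega=5$ is therefore the fully anti-complete one (at $\omega=4$ it was excluded by $(K_4+K_1)$-freeness, but here $K\cup\{a_1,b_7\}$ is only a $K_4+K_1$), and this is precisely the part you leave unproved. Your strategy (i) cannot succeed: no counting argument will produce a vertex of $B_3\cup B_4\cup B_5$ adjacent to both $K$ and $L$, because the connection between $K\cup L$ and $B_3\cup B_4\cup B_5$ forced by the absence of the clique cutset $A$ may run entirely through $(B_1\setminus K)\cup(B_2\setminus L)$. Your strategy (ii) addresses a case that is already subsumed, and the existence of the extending vertex $u\in B_4\cup B_5$ adjacent to $b_4$ but to none of $b_1,b_7$ is not justified: $\delta(G)\ge5$ only forces $b_4$ to have a fifth neighbour somewhere in $B$, possibly in $B_1\cup B_2\cup B_3$. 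What is actually needed --- and what the paper does --- is to take a shortest path $b_1,u_1,\ldots,u_n,b_8$ in $B$ from $K\cup L$ to $B\setminus(B_1\cup B_2)$, note that its internal vertices lie in $B_1\cup B_2$, show that $u_1$ must be adjacent to a second vertex of $K$ (else prepending $b_2$ and appending $a_3,a_4$ yields an induced $P_6$), so that $K\cup\{u_1\}$ is a $K_4$ inside $B_1$, then invoke $(K_5+K_1)$-freeness of $K\cup\{u_1,a_1,b_8\}$ to force $n=1$, and finally exhibit the $P_6$ $\{b_1,u_1,b_8,a_3,a_2,b_4\}$ for a suitable $b_4\in L$. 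None of this machinery appears in your proposal, so as written it does not constitute a proof.
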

\begin{proof}
Let $K=\{b_1,b_2,b_3\}$ be a $K_3$ in $B_1$, and $L=\{b_4,b_5,b_6\}$ be a $K_3$ in $B_2$. Now we prove that $K\cup L$ and $B\setminus(B_1\cup B_2)$ are anti-complete. Suppose that $b_7\in B_3$ is adjacent to $b_1$, then $b_7b_2,b_7b_3\notin E$. Since each of $b_1$ and $b_7$ has at most one neighbour in $L$, assume that $b_4b_1,b_4b_7\notin E$. We may assume that $b_2b_4\notin E$ by symmetry. Then $\{b_4,a_2,a_3,b_7,b_1,b_2\}$ induces a $P_6$. So $K\cup L$ and $B\setminus(B_1\cup B_2)$ are anti-complete.

Since $A$ is not a clique cutset, $B$ is connected. Assume that $b_1,u_1,\ldots,u_n,b_8$ is a shortest path between $K\cup L$ and $B\setminus(B_1\cup B_2)$ in $B$. By symmetry assume that $b_8\in B_3$. Then $u_1,\ldots,u_n\in B_1\cup B_2$. If $b_2u_1\notin E$, then $\{b_2,b_1,u_1,\ldots,u_n,b_8,a_3,a_4\}$ contains a $P_6$. So $b_2u_1\in E$, and then $K\cup\{u_1\}$ is a $K_4$ in $B_1$. Then $n=1$, or else $K\cup\{u_1,a_1,b_8\}$ induces a $K_5+K_1$. Since each of $b_1$ and $u_1$ has at most one neighbour in $L$, assume that $b_4b_1,b_4u_1\notin E$. Then $\{b_1,u_1,b_8,a_3,a_2,b_4\}$ induces a $P_6$.
\end{proof}

\begin{claim}\label{clm:omega5-k4inbi}
Every $B_i$ is $K_4$-free.
\end{claim}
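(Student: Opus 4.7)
The plan is to assume for contradiction that $B_1$ contains a $K_4$, say $K = \{b_1, b_2, b_3, b_4\}$, and derive a contradiction by exhibiting an induced $P_6$ in $G$.

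I would first collect the structural consequences of this assumption. The set $A' := \{a_1\} \cup K$ is another $K_5$ in $G$. Each $b_i$ must be non-adjacent to every $a_j$ with $j \neq 1$, for otherwise $\{a_1, a_j, a_k, b_i\}$ with any $k \in \{2,3,4,5\} \setminus \{j\}$ would induce a diamond. Applying \autoref{lem:domi} to $A'$, every vertex in $V \setminus A'$ has a neighbour in $A'$; since no vertex of $B_2 \cup B_3 \cup B_4 \cup B_5$ is adjacent to $a_1$, each such vertex has exactly one neighbour in $K$ (at least one by domination, at most one because $K$ is a component of $B_1$). This yields a partition $B_2 \cup B_3 \cup B_4 \cup B_5 = N_1 \sqcup N_2 \sqcup N_3 \sqcup N_4$ with $N_i := \{v : v \sim b_i\}$, and $|N_i| \geq 1$ for each $i$ by $\delta(G) \geq 5$ applied to $b_i$ (which has four neighbours in $A'$ and none in $B_1 \setminus K$). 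By \autoref{clm:omega5-2k3} applied in the $A$-framework each $B_j$ with $j \neq 1$ is $K_3$-free; an analogous application in the $A'$-framework (where $B'_{a_1}$ contains the $K_3$ $\{a_2, a_3, a_4\}$) shows each $N_i$ is also $K_3$-free, so components of both $B_j$ and $N_i$ have size at most two.

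The core step is to construct an induced $P_6$. I pick $v \in N_1 \cap B_j$ and $w \in N_2 \cap B_k$, and address first the generic case $j \neq k$, which I may assume is $j = 2$, $k = 3$ after relabelling. A degree count shows that both $v$ and $w$ have a neighbour in $B_1 \setminus K$: their possible neighbours besides $\{a_j, b_i\}$ are one row-partner in $N_i$, one column-partner in $B_j$, and neighbours in $B_1 \setminus K$, and the first four together total at most four, so $\delta(G) \geq 5$ forces at least one in $B_1 \setminus K$. If $v$ and $w$ share a common neighbour $u \in B_1 \setminus K$, I would propose the candidate $P_6$ given by $a_2 - v - u - w - b_2 - b_3$; every non-consecutive pair is a non-edge by the row/column structure, except possibly $vw$, which needs to be ruled out or reduced to a different subcase. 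If instead $v$ and $w$ have no common neighbour in $B_1 \setminus K$, I pick $u' \in B_1 \setminus K$ adjacent to $w$ but not to $v$ and propose $b_3 - b_1 - v - a_2 - w - u'$, which is induced because every potential chord is prevented by the row/column structure together with $u' \not\sim v$. The remaining configurations ($j = k$, the tight case where every $|N_i| = 1$, and diagonal-edge configurations) are handled by analogous paths, swapping in a different $b_{i''} \in K$ or a vertex of $B_1 \setminus K$ in a different component as needed.

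The principal obstacle is that both $A$ and $K$ are cliques, so any candidate induced $P_6$ can contain at most two consecutive vertices from either of them; the required path must therefore traverse $B_1 \setminus K$, whose non-emptiness and structure are pinned down only by the degree count. The most delicate aspect is the case split on whether $v$ and $w$ share a neighbour in $B_1 \setminus K$, together with the verification that in each sub-case some valid induced $P_6$ can be assembled, possibly via alternative extensions using another $b_{i''} \in K$ or a further vertex of $B_2 \cup \ldots \cup B_5$.
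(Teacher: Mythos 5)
Your setup is sound and genuinely different from the paper's: the paper starts by extracting a $K_2$ from $B\setminus B_1$ (which must exist, else $G$ would be $5$-colourable) and analyses how its two ends attach to $K$, whereas you work from the second $K_5$, $A'=\{a_1\}\cup K$, and the partition $B_2\cup\dots\cup B_5=N_1\sqcup\dots\sqcup N_4$. That framework is legitimate (transferring \autoref{clm:omega5-2k3} to the $A'$-framework is exactly the symmetry the paper itself invokes in its Case~2). However, the execution has genuine gaps. First, the degree count is incomplete: for $v\in N_1\cap B_2$ you list only a row-partner in $N_1$, a column-partner in $B_2$, and neighbours in $B_1\setminus K$, but nothing you have established excludes a neighbour $u\in N_{i'}\cap B_{j'}$ with $i'\neq 1$ and $j'\neq 2$, which lies in none of your categories. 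Such a $u$ can be excluded -- $\{b_m,b_1,v,u,a_{j'},a_{j''}\}$ with $m\notin\{1,i'\}$ and $j''\notin\{1,2,j'\}$ induces a $P_6$ -- but this step is missing, and without it the conclusion that $v$ has a neighbour in $B_1\setminus K$ does not follow. The same $P_6$ (with $u$ replaced by $w$) also rules out the $vw$ edge that you leave unresolved in the common-neighbour subcase, so that subcase is repairable but not finished.

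Second, and more concretely, the path you propose in the no-common-neighbour case, $b_3-b_1-v-a_2-w-u'$, is not a path: $w\in B_3$ is adjacent to $a_3$ and not to $a_2$, so the edge $a_2w$ does not exist. A correct replacement is available: take $u\in B_1\setminus K$ adjacent to $v$ and $u'\in B_1\setminus K$ adjacent to $w$; if $uu'\notin E$ then $u-v-b_1-b_2-w-u'$ is an induced $P_6$, and if $uu'\in E$ then $v-u-u'-w-b_2-b_3$ is. But as written this step fails. Finally, the ``remaining configurations'' are dismissed without argument; in fact the case $j=k$ never needs to arise, since $N_1,\dots,N_4$ are all nonempty and $B_3,B_4,B_5$ are all nonempty, so one can always choose $v$ and $w$ lying in distinct $N_i$'s and distinct $B_j$'s -- but that observation, or some substitute for it, has to be supplied. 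In short: the strategy can be carried through, but the proof as proposed contains an invalid path and two unproved exclusion steps on which the argument depends.
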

\begin{proof}
Suppose that $K=\{b_1,b_2,b_3,b_4\}$ is a $K_4$ in $B_1$. If $B\setminus B_1$ is stable, then $G$ is 5-colourable. So let $L=\{b_5,b_6\}$ be a $K_2$ in $B\setminus B_1$, then each of $b_5$ and $b_6$ has exactly one neighbour $K$.

First we assume that $b_5$ and $b_6$ are in the same $B_i$, say $B_2$. Then their neighbours in $K$ are different. Assume that $b_1b_5,b_2b_6\in E$. Suppose that $b_5$ has a neighbour $b_7\in B_3$. Then $b_6b_7\notin E$. At least one of $b_3$ and $b_4$ is nonadjacent to $b_7$, assume that $b_3b_7\notin E$. Then $\{b_6,b_5,b_7,a_3,a_1,b_3\}$ induces a $P_6$. So $b_5$ has no neighbours in $B_3\cup B_4\cup B_5$. By \autoref{clm:omega5-2k3}, since $B_1$ contains a $K_4$, $b_5$ has only one neighbour $b_6$ in $B_2$. By $d(b_5)\geq5$, $b_5$ has a neighbour $b_8\in B_1\setminus K$. Then $b_8b_6\notin E$. By $d(b_3)\geq5$, $b_3$ has a neighbour $b_9\in B\setminus B_1$. Then $b_5b_9,b_6b_9\notin E$. If $b_9b_8\notin E$, then $\{b_8,b_5,b_6,b_2,b_3,b_9\}$ induces a $P_6$. If $b_9b_8\in E$, then $\{b_8,b_9,a_2,b_6,b_2,b_4\}$ or $\{b_8,b_9,b_3,b_2,b_6,a_2\}$ induces a $P_6$, depending on whether $b_9\in B_2$. So $b_5$ and $b_6$ cannot be in the same $B_i$.

Now we assume that $b_5\in B_2$, $b_6\in B_3$ by symmetry. Let $b_1b_5\in E$ and $b_2b_6\notin E$. If $b_1b_6\notin E$, then $\{b_2,b_1,b_5,b_6,a_3,a_4\}$ induces a $P_6$. So $b_1b_6\in E$. Note that $K\cup\{a_1\}$ is a $K_5$, while $A\setminus\{a_1\}$ is a $K_4$ in the neighbourhood of $a_1$ and $L$ is a $K_2$ in the neighbourhood of $b_1$. Then we can get a contradiction by symmetry.
\end{proof}

\begin{claim}\label{clm:omega5-bnc5}
$B$ contains no $C_5$.
\end{claim}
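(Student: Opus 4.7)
The plan is to argue by contradiction, paralleling the proof of \autoref{clm:omega4-bnc5}. Suppose $Q = \{v_1, v_2, v_3, v_4, v_5\}$ induces a $C_5$ in $B$ with edges $v_i v_{i+1}$ (indices mod $5$), and apply the partitioning of $V$ relative to $Q$ from \autoref{sec:imperfect}. The first step is to show that $Q \subseteq B_i \cup B_j$ for some two indices $i \neq j$: since $A = \{a_1, \ldots, a_5\}$ is a $K_5$, properties~\ref{item:2} and~\ref{item:21} confine the $a_m$'s to a narrow range of parts in the Section~3 partition, and combining this with property~\ref{item:13} (each $F$-part has at most one vertex and $F$ is stable) and \autoref{clm:omega5-k4inbi} forces, up to relabelling, the configuration $v_1, v_3 \in B_1$ and $v_2, v_4, v_5 \in B_2$.

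Second, for any $u \in B_3 \cup B_4 \cup B_5$ I would classify $N_Q(u)$ exactly as in the $\omega = 4$ proof: if $N_Q(u) = \emptyset$ then a $P_6$ of the form $\{u, a_m, a_1, v_1, v_5, v_4\}$ appears; a size-$2$ non-sequential $N_Q(u)$ again produces a $P_6$; property~\ref{item:21} together with diamond-freeness leaves only the case $|N_Q(u)| = 3$ with non-sequential neighbours, and since $v_4, v_5$ lie in a common component of $B_2$, the triple $\{v_2, v_4, v_5\}$ is excluded. The remaining four admissible triples, combined with property~\ref{item:13}, force $B_3, B_4, B_5$ to be stable and of tightly bounded size.

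The final step combines this rigid structure with \autoref{clm:omega5-2k3} and \autoref{clm:omega5-k4inbi} to derive a contradiction to 6-vertex-criticality, most directly by producing an explicit 5-colouring of $G$: colour $a_i$ with colour~$i$, split the remaining four colours across $B_i \cup (A \setminus \{a_i\})$ using the fact that no vertex of $B_i$ is adjacent to all of $A \setminus \{a_i\}$ lest a $K_6$ appear, and 3-colour $B \setminus B_i$ compatibly with the forced $N_Q$-patterns. The main obstacle is step~1: enumerating all placements of five mutually adjacent vertices inside the Section~3 partition of a $C_5$ is substantially more intricate than for four vertices, and several of the properties~\ref{item:2}--\ref{item:21} must be combined to eliminate the configurations in which $Q$ is not covered by just two of the $B_i$'s.
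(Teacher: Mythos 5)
Your first two steps track the paper's proof closely: the paper likewise places $Q$ so that $v_1,v_3\in B_1$ and $v_2,v_4,v_5\in B_2$, derives the same four admissible triples for $N_Q(u)$ when $u\in B_3\cup B_4\cup B_5$, and concludes that $B_3\cup B_4\cup B_5$ is stable with pairwise distinct neighbourhoods in $Q$ (hence at most $4$ vertices). The gap is in your final step. The paper does not attempt a colouring at all: it picks $b_1\in B_3$ and $b_2\in B_4$ (both nonempty, since each $B_i$ contains at least $6-\omega=1$ vertex), notes that their $Q$-neighbourhoods are two \emph{distinct} triples from the list (equal triples would give a diamond), so some edge $v_iv_j$ of $Q$ satisfies $v_i\in N_Q(b_1)\setminus N_Q(b_2)$ and $v_j\in N_Q(b_2)\setminus N_Q(b_1)$, yielding an induced $P_4$ on $\{b_1,v_i,v_j,b_2\}$; prepending $a_3$ and $a_5$ gives the $P_6$ $\{a_5,a_3,b_1,v_i,v_j,b_2\}$, because every vertex of $B$ is adjacent to exactly one vertex of $A$. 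That last fact also exposes a misconception in your sketch: you invoke ``no vertex of $B_i$ is adjacent to all of $A\setminus\{a_i\}$ lest a $K_6$ appear,'' but by the properties recorded in \autoref{lem:domi} the sets $B_1,\ldots,B_5$ are pairwise disjoint, i.e.\ a vertex of $B_i$ is adjacent to \emph{no} vertex of $A\setminus\{a_i\}$ at all.

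The $5$-colouring you propose is not supported by the structure you have established. For any choice of $i$, the set $B\setminus B_i$ either contains all of $B_1\cup B_2$, which contains the $C_5$ and is only known to be ($P_6$, diamond, $K_4$)-free --- such graphs need not be $3$-colourable (the Gr{\"o}tzsch graph is a ($P_6$, diamond, $K_3$)-free $4$-chromatic graph) --- or, for $i\in\{1,2\}$, it consists of the clique components of the other of $B_1,B_2$ together with the stable set $B_3\cup B_4\cup B_5$, and you give no argument that these can be $3$-coloured compatibly with the edges between them and with the colouring of $B_i$. Note also that \autoref{clm:omega5-3bn2c}, the only colourability statement of this kind in the paper, is proved \emph{after} and \emph{using} the present claim, so it is not available here. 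The fix is simply to replace your third step by the two-vertex $P_6$ argument above.
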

\begin{proof}
Let $Q$=\{$v_1,v_2,v_3,v_4,v_5$\} induce a $C_5$ in $B$ with edges $v_iv_{i+1}$ for $i=1,\ldots,5$, all indices modulo 5. By the properties from \autoref{sec:imperfect}, we may assume that $v_1,v_3\in B_1$, $v_2,v_4,v_5\in B_2$. Then for any $u\in B_3\cup B_4\cup B_5$, $N_Q(u)=\{v_1,v_2,v_4\}$ or $\{v_1,v_3,v_4\}$ or $\{v_1,v_3,v_5\}$ or $\{v_2,v_3,v_5\}$. And then $B_3\cup B_4\cup B_5$ is stable and contains at most 4 vertices whose neighbourhoods in $Q$ are pairwise different. Let $b_1\in B_3$, $b_2\in B_4$. Note that there is an induced $P_4=\{b_1,v_i,v_j,b_2\}$. Then $\{a_5,a_3,b_1,v_i,v_j,b_2\}$ induces a $P_6$.
\end{proof}

\begin{claim}\label{clm:omega5-3bn2c}
$B\setminus (B_i\cup B_j)$ is not 2-colourable for $i\neq j$.
\end{claim}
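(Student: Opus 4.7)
The plan is to argue by contradiction: suppose $B \setminus (B_i \cup B_j)$ is 2-colourable for some $i \neq j$; without loss of generality $B_3 \cup B_4 \cup B_5$ admits a 2-colouring using colour set $\{1, 2\}$. I will build a proper 5-colouring of $G$ to contradict 6-vertex-criticality. Assign $a_k$ the colour $k$ for $k = 1, \ldots, 5$, colour $B_3 \cup B_4 \cup B_5$ using the given 2-colouring with colours $\{1, 2\}$, and colour $B_1 \cup B_2$ using only colours from $\{3, 4, 5\}$. All cross-edges are automatically proper: each $B_k$ is adjacent in $A$ only to $a_k$, so for $k \in \{3,4,5\}$ its colour $k$ is outside $\{1,2\}$; the edges $a_1$--$B_1$ and $a_2$--$B_2$ use colours from $\{1,2\}$ which are disjoint from $\{3,4,5\}$; and the two halves of $B$ use the disjoint sets $\{3,4,5\}$ and $\{1,2\}$ across edges between them.

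The only non-trivial step is producing a proper 3-colouring of $G[B_1 \cup B_2]$. My plan is to show that this induced subgraph is perfect with clique number at most $3$, so that $\chi = \omega \leq 3$ suffices. For perfectness I would apply \autoref{thm:SPGT}: $B$ contains no $C_5$ by \autoref{clm:omega5-bnc5}; $P_6$-freeness excludes every induced $C_k$ with $k \geq 7$ (deleting one vertex yields an induced $P_{k-1} \supseteq P_6$); diamond-freeness excludes every induced $\overline{C_k}$ with $k \geq 7$ (the four vertices $v_1, v_3, v_4, v_6$ of such an antihole induce a diamond); and $\overline{C_5} = C_5$ is excluded above. For the clique bound, each $B_i$ is $K_4$-free by \autoref{clm:omega5-k4inbi}, and any $K_4$ meeting both $B_1$ and $B_2$ contains two vertices $v_1, v_2$ on the same side, say $B_1$, together with a vertex $v_3$ on the other side; then $\{a_1, v_1, v_2, v_3\}$ induces a diamond (since $a_1$ is complete to $B_1$ and anti-complete to $B_2$), a contradiction.

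The main obstacle is establishing the 3-colourability of $G[B_1 \cup B_2]$; the crucial input is that the two preceding auxiliary claims \autoref{clm:omega5-bnc5} and \autoref{clm:omega5-k4inbi} together trim $G[B_1 \cup B_2]$ to a $K_4$-free perfect graph, at which point the Strong Perfect Graph Theorem closes the argument and the 5-colouring contradicts the hypothesis that $G$ is 6-vertex-critical.
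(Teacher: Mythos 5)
Your proposal is correct and follows essentially the same route as the paper: the paper also reduces to showing that $B_1\cup B_2$ would have to be 3-colourable, deduces from \autoref{clm:omega5-k4inbi} that it is $K_4$-free, and then invokes the Strong Perfect Graph Theorem together with \autoref{clm:omega5-bnc5} (no $C_5$ in $B$) to conclude — you simply run this chain in the contrapositive direction (no $C_5$ $\Rightarrow$ perfect $\Rightarrow$ 3-colourable) rather than the paper's (not 3-colourable $\Rightarrow$ imperfect $\Rightarrow$ contains $C_5$). You also usefully fill in two details the paper leaves implicit, namely why long odd holes and antiholes cannot occur in a $(P_6,\text{diamond})$-free graph and why a $K_4$ meeting both $B_1$ and $B_2$ forces a diamond with $a_1$ or $a_2$.
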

\begin{proof}
Suppose that $B_3\cup B_4\cup B_5$ is 2-colourable. Then $B_1\cup B_2$ is not 3-colourable, or else $G$ is 5-colourable. Since $B_1\cup B_2$ contains no $K_4$ by \autoref{clm:omega5-k4inbi}, $B_1\cup B_2$ is imperfect. Then $B_1\cup B_2$ contains a $C_5$, contradicting with \autoref{clm:omega5-bnc5}.
\end{proof}

Since $G$ is not 5-colourable, at least one of $B_1,\ldots,B_5$ is nonstable. By \autoref{clm:omega5-k4inbi}, the largest component among $B_1,\ldots,B_5$ is of size 2 or 3.

\begin{claim}\label{clm:omega5-k3-bhask4}
Let $K$ in $B_i$ be a largest component among $B_1,\ldots,B_5$. Then there is a $K_4$ containing one vertex in each of $K,B_{j_1},B_{j_2},B_{j_3}$ for every combination of three sets $B_{j_1},B_{j_2},B_{j_3}$ in $\{B_1,\ldots,B_5\}\setminus\{B_i\}$.
\end{claim}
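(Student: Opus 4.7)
My plan is to find a triangle $L=\{b_2,b_3,b_4\}$ with one vertex in each of $B_2,B_3,B_4$ (after renaming so $i=1$ and $\{j_1,j_2,j_3\}=\{2,3,4\}$) and then to show some $v\in K$ is complete to $L$, yielding the desired $K_4=\{v\}\cup L$. The triangle exists because by \autoref{clm:omega5-3bn2c} the set $B_2\cup B_3\cup B_4$ is not $2$-colourable, so it contains an odd cycle, which must be a $K_3$ since $G$ is $P_6$-free (ruling out cycles of length $\ge 7$) and $B$ is $C_5$-free by \autoref{clm:omega5-bnc5}. To force this $K_3$ to have one vertex in each $B_l$, I combine \autoref{clm:omega5-2k3} with the fact that $K$ is a largest component with $|K|\ge 2$ (else $G$ is trivially $5$-colourable by sending each $B_l$ to a colour distinct from the one used on $a_l$); this rules out the $K_3$ lying inside a single $B_l$. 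A diamond argument (each vertex has at most one neighbour per component of another $B_{l'}$) then excludes the $K_3$ from spanning only two of the $B_l$'s.

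Next, I argue some $v\in K$ is complete to $L$. Diamond-freeness forces each $v\in K$ to have $0$, $1$, or $3$ neighbours in $L$, since exactly $2$ neighbours would yield the diamond $\{v,b_2,b_3,b_4\}$ whose missing edge is $v$ to the non-neighbour. Assuming for contradiction that no $v$ has $3$, two adjacent vertices $v,v'\in K$ cannot both have neighbours in $L$: a shared neighbour $b$ gives the diamond $\{v,v',b,a_1\}$, while distinct neighbours $vb$ and $v'b'$ produce the induced $P_6$ $\{v,v',b',b'',a_{b''},a_5\}$, where $b''\in L\setminus\{b,b'\}$ and $a_{b''}$ is the $A$-vertex with $b''\in B_{a_{b''}}$. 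So at most one $v^*\in K$ has an $L$-neighbour, and if such a $v^*$ exists with neighbour $b_2$, then for any $v'\in K\setminus\{v^*\}$ the set $\{v',v^*,b_2,b_3,a_3,a_4\}$ is an induced $P_6$. This reduces us to the case where $K$ and $L$ are anti-complete, which is the main obstacle.

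For the anti-complete case I use that $B$ is connected (as $A$ is not a clique-cutset, by \autoref{lem:critical}). I take a shortest $K$-to-$L$ path $v=u_0,u_1,\ldots,u_n=b$ in $B$ with $v$ realising $n=d(K,L)\ge 2$; $P_6$-freeness of $G$ forces $n\le 4$. For any $v'\in K\setminus\{v\}$, I verify $v'u_1\notin E$ (else $\{v,v',u_1,a_1\}$ is a diamond) and $v'u_i\notin E$ for $i\ge 2$ (else $d(v',L)<n$, contradicting minimality). If $n=4$, then $\{v',v,u_1,u_2,u_3,u_4\}$ is already an induced $P_6$. If $n=3$, at most one of $u_1,u_2$ lies in $B_{a_b}$ (otherwise $u_1,u_2,b$ would share a component of $B_{a_b}$, forcing $u_1b\in E$ against shortest-path minimality), and the three resulting sub-cases yield the induced $P_6$'s $\{v',v,u_1,u_2,b,a_b\}$, $\{v',v,u_1,u_2,a_b,a_k\}$ (for a suitable $k\notin\{1,a_b\text{'s index},\text{index}(u_1)\}$), or $\{v',v,u_1,a_{a_b},b,b'\}$ with $b'\in L\setminus\{b\}$ (the last using a diamond argument to ensure $u_1b'\notin E$). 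The case $n=2$ is handled by the analogous split on whether $u_1\in B_{a_b}$, giving induced $P_6$'s $\{v',v,u_1,b,a_b,a_k\}$ or $\{v',v,u_1,b,b',a_{b'}\}$. The main difficulty is the careful case analysis for $n\in\{2,3\}$; once the right $P_6$ is pinpointed in each sub-case, the required non-edges follow from $K\subseteq B_1$, the $K_3$-structure of $L$, shortest-path minimality, and targeted diamond arguments.
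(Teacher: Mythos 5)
Your proposal is correct and follows essentially the same route as the paper: the triangle $L$ with one vertex in each of $B_{j_1},B_{j_2},B_{j_3}$ is extracted from \autoref{clm:omega5-2k3}, \autoref{clm:omega5-bnc5} and \autoref{clm:omega5-3bn2c}; diamond-freeness forces each vertex of $K$ to see $0$, $1$ or $3$ vertices of $L$; a $P_6$ rules out the "exactly one neighbour" situation; and the anti-complete case is killed by taking a shortest $K$--$L$ path in the connected graph $B$ and extending it to an induced $P_6$ with a second vertex of $K$ and suitable vertices of $A\cup L$. Your case analysis on the path length is organised slightly differently from the paper's (which first forces the last internal vertex to be complete to $L$), but the decomposition, the key claims invoked, and the contradictions obtained are the same.
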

\begin{proof}
Let $K=\{b_1,b_2\}$ or $\{b_1,b_2,b_3\}$ in $B_1$. By \autoref{clm:omega5-2k3}, \autoref{clm:omega5-bnc5} and \autoref{clm:omega5-3bn2c}, there are $b_4\in B_2, b_5\in B_3, b_6\in B_4$ such that $L=\{b_4,b_5,b_6\}$ is a $K_3$. If some vertex in $K$ is complete to $L$, then we are done. So suppose that every vertex in $K$ has at most one neighbour in $L$. Suppose that $b_1b_4\in E$. By symmetry we may assume that $b_2b_5\notin E$. Then $\{b_2,b_1,b_4,b_5,a_3,a_4\}$ induces a $P_6$. So $K$ and $L$ are anti-complete.

Since $B$ is connected, assume that $\{b_1,u_1,\ldots,u_n,b_4\}$ is a shortest path between $K$ and $L$ in $B$. Note that $u_1\notin B_1$, so $b_2u_1\notin E$. If $n\geq3$, then $\{b_2,b_1,u_1,\ldots,u_n,b_4\}$ contains a $P_6$. If $n=2$, then $u_2$ is complete to $L$, for otherwise $\{b_2,b_1,u_1,u_2,b_4,b_5\}$ induces a $P_6$. By symmetry we may assume that $u_1,u_2\notin B_2$. Then $\{b_2,b_1,u_1,u_2,b_4,a_2\}$ induces a $P_6$. So $n=1$. If $u_1\in B_2$, then $u_1b_5\notin E$, or else $\{u_1,b_4,b_5,a_2\}$ induces a diamond. Then $\{b_2,b_1,u_1,b_4,b_5,a_3\}$ induces a $P_6$. If $u_1\notin B_2$, by symmetry assume that $u_1\notin B_3$, then $\{b_2,b_1,u_1,b_4,a_2,a_3\}$ induces a $P_6$.
\end{proof}

\begin{claim}\label{clm:omega5-bhask5}
Let $K$ be a largest component among $B_1,\ldots,B_5$, then there is a $K_5$ in $B$ containing one vertex in each of $B_1,\ldots,B_5$, and containing one vertex in $K$.

\end{claim}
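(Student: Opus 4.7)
The plan is to combine two $K_4$'s produced by \autoref{clm:omega5-k3-bhask4} into the desired $K_5$ using repeated ``diamond-forcing'': given a $K_4$ on $\{u_1,u_2,u_3,u_4\}$ and a vertex $v\notin\{u_1,\ldots,u_4\}$, if $v$ is adjacent to at least three of the $u_i$'s then $v$ must be adjacent to all four, for otherwise $\{v,u_1,u_2,u_3,u_4\}$ contains an induced $K_4-e$, a diamond.

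First I would apply \autoref{clm:omega5-k3-bhask4} to the triples $\{B_2,B_3,B_4\}$ and $\{B_2,B_3,B_5\}$ to obtain $K_4$'s $X=\{x_1,x_2,x_3,x_4\}$ and $Y=\{y_1,y_2,y_3,y_5\}$ with $x_1,y_1\in K$ and $x_j,y_j\in B_j$. I then reduce to the case $x_1=y_1$. If $x_1\neq y_1$, then $x_1y_1\in E$ (both lie in the clique $K$), and $\{a_1,x_1,y_1,x_j\}$ would be a diamond unless $y_1x_j\notin E$; so $y_1$ is non-adjacent to each of $x_2,x_3,x_4$ and symmetrically $x_1$ to each of $y_2,y_3,y_5$. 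Depending on whether $x_4y_5\in E$, I exhibit an induced $P_6$: either $y_3-y_5-x_4-x_1-a_1-a_2$ (when $x_4y_5\in E$, after a further diamond-forcing on $\{y_1,y_3,y_5,x_4\}$ gives $y_3x_4\notin E$), or $y_5-y_1-x_1-x_4-a_4-a_2$ (when $x_4y_5\notin E$). Either choice contradicts $P_6$-freeness.

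Setting $b_1:=x_1=y_1$, I split on the equalities $x_2=y_2$ and $x_3=y_3$. If both hold, the set $\{x_2,x_3,x_4,y_5\}$ has five of its six potential edges, so diamond-forcing yields $x_4y_5\in E$ and $\{b_1,x_2,x_3,x_4,y_5\}$ is the desired $K_5$. If exactly one holds, say $x_2=y_2$ and $x_3\neq y_3$, then diamond-forcing on $\{b_1,x_2,x_3,y_5\}$ forces $x_3y_5\in E$, and then on $\{x_2,x_3,x_4,y_5\}$ forces $x_4y_5\in E$, again producing the $K_5$. In the remaining case, $x_2\neq y_2$ and $x_3\neq y_3$, diamond-forcing on $\{b_1,x_2,y_2,x_3\}$ shows that $x_2y_2$ and $x_3y_2$ are either both in $E$ or both absent, and analogously for $x_3y_3$ and $x_2y_3$; when at least one such pair lies in $E$, a cascade of diamond-forcings through $y_5$ and $x_4$ again produces the $K_5$.

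The main obstacle is the remaining ``all cross-edges absent'' sub-case, in which $X$ and $Y$ share only $b_1$ and no edge runs between $\{x_2,x_3,x_4\}$ and $\{y_2,y_3,y_5\}$. Here I would exhibit the induced $P_6$ $y_3-y_5-a_5-a_2-x_2-x_3$: the five path edges come from the two $K_4$'s together with $A$ being a clique, and each of the nine non-adjacencies to be checked follows either from the pairwise disjointness of the $B_i$'s (so $a_5$ and $a_2$ have no $B$-neighbours outside $B_5$ and $B_2$ respectively) or from the hypothesis of this sub-case. This induced $P_6$ contradicts $P_6$-freeness, completing the proof.
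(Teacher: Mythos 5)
Your overall strategy---taking the $(K,B_2,B_3,B_4)$- and $(K,B_2,B_3,B_5)$-$K_4$'s from \autoref{clm:omega5-k3-bhask4}, forcing them to share their $K$-vertex, and then merging them by repeated diamond-forcing---is sound and close in spirit to the paper's proof, which likewise ends up working with a $K_4$ of type $(1,2,3,4)$ and one of type $(1,2,3,5)$ through the same vertex $b_1$ of $K$ (the paper first pins down the adjacencies of a single $B_5$-vertex against the first $K_4$ before producing the second one). Your reduction to $x_1=y_1$ checks out: both exhibited $P_6$'s are induced, and the sub-cases where $x_2=y_2$ or $x_3=y_3$ are handled correctly.

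There is, however, a concrete gap in the final sub-case. When $x_2\neq y_2$ and $x_3\neq y_3$, your forcing on $\{b_1,x_2,y_2,x_3\}$ and $\{b_1,x_2,x_3,y_3\}$ only shows that the four edges in $\{x_2,x_3\}\times\{y_2,y_3\}$ come in all-or-nothing pairs; the ``remaining'' sub-case is therefore ``$x_2y_2,x_3y_2,x_2y_3,x_3y_3\notin E$'', not ``no edge runs between $\{x_2,x_3,x_4\}$ and $\{y_2,y_3,y_5\}$'' as you assert. In particular the non-adjacencies $x_2y_5\notin E$ and $x_3y_5\notin E$, which your path $y_3$-$y_5$-$a_5$-$a_2$-$x_2$-$x_3$ needs to be induced, follow neither from the disjointness of the $B_i$ nor from the sub-case hypothesis you actually derived. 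The gap is fillable with your own tool: given $x_2y_2\notin E$, the set $\{b_1,x_2,y_2,y_5\}$ already carries the four edges $b_1x_2,b_1y_2,b_1y_5,y_2y_5$, so $x_2y_5\in E$ would leave $x_2y_2$ as the unique missing edge and create a diamond; symmetrically $x_3y_5\notin E$ follows from $x_3y_3\notin E$ (and the remaining cross-edges at $x_4$ can be killed the same way). Two further remarks: the sub-case ``at least one pair lies in $E$'' is in fact vacuous, since $x_2y_2\in E$ would make $\{a_2,b_1,x_2,y_2\}$ a diamond (equivalently, $b_1$ would have two neighbours in one component of $B_2$); your unspecified cascade does happen to work there, but observing the vacuity is simpler and avoids leaving an unverified step.
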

\begin{proof}
Let $K=\{b_1,b_2\}\ or\ \{b_1,b_2,b_3\}$ in $B_1$. By \autoref{clm:omega5-k3-bhask4}, there are $b_4\in B_2$, $b_5\in B_3$, $b_6\in B_4$ such that $\{b_1,b_4,b_5,b_6\}$ is a $K_4$, and there is a vertex $b_7\in B_5$ which has a neighbour in $K$. If $b_7$ is complete to $\{b_4,b_5,b_6\}$, then $b_7b_1\in E$ and so we are done. Suppose that $b_7$ has at most one neighbour in $\{b_4,b_5,b_6\}$. By symmetry assume that $b_7b_4,b_7b_5\notin E$.

If $b_7$ is adjacent to $b_2$ or $b_3$, by symmetry assume that $b_7b_2\in E$, then $\{b_7,b_2,b_1,b_4,a_2,a_3\}$ induces a $P_6$. So $b_7$ is adjacent to $b_1$. Then $b_7b_6\notin E$, or else $\{b_1,b_7,b_6,b_5\}$ induces a diamond. Then there are $b_8\in B_2$ and $b_9\in B_3$ such that $\{b_1,b_7,b_8,b_9\}$ is a $K_4$. Since $\{b_6,b_7,b_8,b_9\}$ cannot induce a diamond, at least one of $b_8$ and $b_9$ is nonadjacent to $b_6$, by symmetry assume that $b_8b_6\notin E$. Since $\{b_8,b_4,b_5,b_6\}$ cannot induce a diamond, at least one of $b_4$ and $b_5$ is nonadjacent to $b_8$. If $b_8b_4\notin E$, then $\{b_8,b_7,a_5,a_4,b_6,b_4\}$. If $b_8b_5\notin E$, then $\{b_8,b_7,a_5,a_4,b_6,b_5\}$ induces a $P_6$.
\end{proof}

Now we may assume that $A'=\{b_1,b_2,b_3,b_4,b_5\}$ is a $K_5$ in $B$ such that $b_i\in B_i$ for $i=1,2,3,4,5$, and there is a vertex $v_1$ adjacent to $a_1$ and $b_1$. Similarly as in the proof of \autoref{thm:omega4} we consider the neighbourhood of $v_1$. Then $d(v_1)\leq4$, a contradiction. So there are no 6-vertex-critical ($P_6$,diamond)-free graphs with clique number 5.

\section{Conclusion}\label{sec:conclude}

In this paper, we improved the $\chi$-bound of ($P_6$, diamond)-free graphs from $\chi(G)\leq\omega(G)+3$~\cite{CHM18} to $\chi(G)\leq \max\{6,\omega(G)\}$. Moreover, we proved that the chromatic number of graphs in the class of ($P_6$, diamond)-free graphs can be calculated in polynomial time. We suspect that similar results can be obtained for other hereditary graph families: if a hereditary graph family has a $\chi$-bound in the form of $\chi(G)\leq\omega(G)+C$ where $C$ is a constant, then it may be possible to improve the bound to $\max\{C',\omega(G)\}$ where $C'$ is a constant. If that is the case, it may also be possible to compute the chromatic number in polynomial time for this family of graphs. However, this is not always possible since determining the chromatic number is NP-hard for the hereditary family of line graphs, which has a $\chi$-bound in the form of $\chi(G)\leq\omega(G)+C$. So it would be interesting to find other hereditary graph families for which the chromatic number can be determined in polynomial time in this way.


\section*{Appendix}

\subsection*{A Computer-Free Proof  of a Weaker Version of \autoref{thm:omega3}}\label{ssc:omega3_manual}

The proof of \autoref{thm:omega3} uses computational methods. In this section we give a computer-free proof of a weaker version of \autoref{thm:omega3}. This weaker theorem still suffices to give a complete computer-free proof of \autoref{thm:poly} and \autoref{thm:K6free}.

\begin{theorem}\label{thm:omega3_manual}
There are finitely many 6-vertex-critical ($P_6$,diamond)-free graphs with clique number 3.
\end{theorem}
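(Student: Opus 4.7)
The plan is to show that $|V(G)|$ is bounded by an absolute constant, which immediately yields that there are only finitely many such graphs. Let $G$ be a 6-vertex-critical $(P_6,\text{diamond})$-free graph with $\omega(G)=3$. By Lemmas~\ref{lem:H4free}, \ref{lem:H5free}, \ref{lem:H1free}, and \ref{lem:domi} (specialized to $\omega=3$), $G$ is in addition $(D_1, D_2, S_1, K_3+K_1, K_4)$-free. Because $\chi(G)>\omega(G)=3$, the graph $G$ is imperfect; odd holes of length at least $7$ contain $P_6$, $\overline{C_7}$ contains a diamond, and $\overline{C_k}$ for odd $k\geq 9$ contains $K_4$, so by Theorem~\ref{thm:SPGT} the only odd hole or odd antihole $G$ can contain is $C_5$. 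Fix such a cycle $Q=\{v_1,\ldots,v_5\}$ and use the $C_5$-based partition of Section~\ref{sec:imperfect}. The $S_1$-freeness forces $B=\emptyset$, and property~(13) gives $|F|\leq 5$.

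The heart of the argument is bounding $|A|$ and $|C|$ by absolute constants. For each $A_i$, every component is a clique whose union with $v_i$ must be $K_4$-free, so components have size at most $2$. The number of components is then bounded by combining the $(K_3+K_1)$-free condition (so every triangle of $G$ dominates $V$), diamond-freeness (so $G[N(x)]$ is a disjoint union of $K_1$'s and $K_2$'s for every vertex $x$), $\delta(G)\geq 5$, the absence of comparable vertices, and the interaction rules (2), (3), (19), (22)--(25) between $A_i$ and the other parts. An analogous argument, built on properties (8)--(12) and (22)--(26) together with the dominating-triangle consequence of $(K_3+K_1)$-freeness and incomparability of neighborhoods, bounds each $|C_{i,i+2}|$.

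Once $|A|$, $|C|$, $|F|$ are bounded, $Z$ is handled as in the proof of Claim~\ref{clm:Z colouring}: every vertex of $Z$ has all its neighbors in $C\cup F$ (property~(21) and $B=\emptyset$), each nontrivial component $K$ of $Z$ satisfies that $K\cup N_C(K)$ is a clique, so $K_4$-freeness forces $|K|\leq 2$ and $|N_C(K)|\leq 1$, and the no-comparable-vertex condition caps the number of components of $Z$ by a function of $|C\cup F|$. Summing the bounds yields $|V(G)|\leq N$ for an absolute constant $N$, which gives the theorem.

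The main obstacle is the bounding of $|A_i|$ and $|C_{i,i+2}|$. While each $A_i$-component is tiny and each $C_{i,i+2}$ is stable, excluding unboundedly many components (respectively unboundedly many vertices with pairwise incomparable neighborhoods into the rest of $G$) requires a careful combination of $P_6$-freeness, the dominating-triangle property coming from $(K_3+K_1)$-freeness, and the long list of structural interactions catalogued in Section~\ref{sec:imperfect}. The argument is essentially a long but routine case analysis, and no conceptually new ingredient beyond the existing lemmas should be required.
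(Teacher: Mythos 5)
There is a genuine gap: the step you yourself identify as ``the heart of the argument''---bounding the number of components of each $A_i$ and the size of each stable set $C_{i,i+2}$ by absolute constants---is never actually carried out. You list the ingredients you expect to use ($(K_3+K_1)$-freeness, diamond-freeness, $\delta(G)\geq 5$, incomparability, the properties (2)--(26)), and assert that ``a long but routine case analysis'' will finish the job, but none of these facts, individually or in any obvious combination, gives an upper bound on, say, $|C_{1,3}|$: a stable set of vertices all adjacent to $v_1$ and $v_3$ can a priori be arbitrarily large, and ruling this out is precisely where the difficulty of the theorem lies. Incomparability only says that these vertices have pairwise distinct neighbourhoods into the rest of the graph, which is no restriction until the rest of the graph has already been bounded---so the argument as sketched is circular. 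The same problem occurs for the number of components of $A_i$ and for the trivial components of $Z$.

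The paper closes exactly this gap by a different decomposition and one external theorem that your plan does not invoke. Instead of partitioning around a $C_5$, it partitions around a triangle $A=\{a_1,a_2,a_3\}$: by \autoref{lem:domi} the graph is $(K_3+K_1)$-free, so the triangle dominates and $V=A\cup B_1\cup B_2\cup B_3$ with $B_i=N(a_i)\setminus A$. Each union $B_i\cup B_j$ is then shown to be triangle-free (a triangle there would create a $K_4$ or a diamond with some $a_k$), $P_6$-free, not $3$-colourable, and---after a careful argument eliminating comparable vertices inside $B_i\cup B_j$---to fall under \autoref{thm:P6C3} (Randerath--Schiermeyer--Tewes), which forces it to be an induced subgraph of the $16$-vertex Clebsch graph. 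Summing over the three pairs gives $|V(G)|\leq 27$. That appeal to \autoref{thm:P6C3} is the essential finiteness mechanism; without it, or a substitute of comparable strength, your outline does not yield a bound, so the proposal cannot be accepted as a proof.
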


\begin{proof}[Proof of \autoref{thm:omega3_manual}]
Let $G=(V,E)$ be a 6-vertex-critical ($P_6$,diamond)-free graph with clique number 3. Let $A=\{a_1,a_2,a_3\}$ be a $K_3$ in $G$, $B_i=\{v\in V\setminus A: va_i\in E\}$ for $i=1,2,3$, $B=\bigcup_{i=1}^{3}B_i$. By \autoref{lem:domi}, $G$ is ($K_3+K_1$)-free. So $V=A\cup B$ and we may use the properties of $B$ mentioned in \autoref{lem:domi}.

\begin{claim}\label{clm:omega3-11vertex}
$B\setminus B_i$ is $K_3$-free and not 3-colourable for $i=1,2,3$.
\end{claim}
\begin{proof}
By symmetry assume that $i=3$. Since $G$ is $K_4$-free, each $B_j$ cannot contain a $K_3$. If $\{b_1,b_2,b_3\}$ is a $K_3$ in $B_1\cup B_2$, by symmetry assume that $b_1,b_2\in B_1$, $b_3\in B_2$, then $\{b_1,b_2,b_3,a_1\}$ induces a diamond. So $B_1\cup B_2$ is $K_3$-free. If $B_1\cup B_2$ is 3-colourable, then we have a 5-colouring for $G$: colour $a_1,a_2,a_3$ with colours $1,2,3$ in order, colour $B_3$ with colours in $\{1,2\}$ and $B_1\cup B_2$ with colours in $\{3,4,5\}$. So $B_1\cup B_2$ is not 3-colourable.
\end{proof}

\begin{claim}\label{clm:omega3-16vertex}
$B\setminus B_i$ is an induced subgraph of the Clebsch graph for $i=1,2,3$.
\end{claim}
\begin{proof}
By symmetry let $i=3$. By \autoref{thm:P6C3} and \autoref{clm:omega3-11vertex}, every $B_1\cup B_2$ contains the Gr{\"o}tzsch graph. Let $K$ be a component of $B_1\cup B_2$ such that $K$ contains the Gr{\"o}tzsch graph. Let $L$ be a graph obtained from $K$ by successively deleting the vertex with the smaller neighbourhood of every pair of comparable vertices (if the neighbourhoods are equal, pick any of the two vertices). 
Let $u_L$ be the last deleted vertex, if it exists. Then $L$ contains the Gr{\"o}tzsch graph, and is an induced subgraph of the Clebsch graph by \autoref{thm:P6C3}. Let $L=\{v_0,\ldots,v_{10},\ldots\}$, whose indices correspond to \autoref{fig:Clebsch}. Note that each of $v_{11},\ldots,v_{15}$ may either exist or not exist.

Now we prove that every vertex in $(B_1\cup B_2)\setminus L$ has a neighbour in $L$. Suppose that $u\in (B_1\cup B_2)\setminus L$ has no neighbours in $L$. Since $B_1$ and $B_2$ are $P_3$-free, every induced $C_5$ in $B_1\cup B_2$ has the property that two nonadjacent vertices of the $C_5$ belong to one of $B_1$ and $B_2$, and the other three vertices belong to the other of $B_1$ and $B_2$. Now we consider $\{v_6,v_7,v_8,v_9,v_{10}\}$, by symmetry we may assume that $v_6,v_8\in B_1$, $v_7,v_9,v_{10}\in B_2$. Then $v_5\in B_1$. Then $\{u,v_9,v_{10},a_2\}$ or $\{u,v_5,v_6,a_1\}$ induces a $K_3+K_1$, depending on whether $u$ belongs to $B_1$. So every vertex in $(B_1\cup B_2)\setminus L$ has a neighbour in $L$. This implies that $B_1\cup B_2$ is connected.

Now assume that $u_L$ exists, and by symmetry assume that $u_L\in B_1$. Since $u_L$ is the last deleted vertex when we obtain $L$, there is a vertex $u_1\in L$ such that $N_L(u_L)\subseteq N_L(u_1)$. There is a vertex $u_2\in N_L(u_1)$ such that $u_Lu_2\in E$. Note that every $K_2$ in $L$ is in a $C_5$ in $L$. Let $\{u_1,u_2,u_3,u_4,u_5\}$ be an arbitrary $C_5$ in $L$ containing $u_1$ and $u_2$, such that $v_iv_{i+1}\in E$ for $i=1,\ldots,5$, all indices modulo 5.
Now we prove that $u_Lu_5\notin E$. Suppose that $u_Lu_5\in E$. Now we discuss two cases, depending on $u_1$.

\case{1} $u_1\in B_2$.

Since $B_1$ and $B_2$ are $P_3$-free, $u_2$ and $u_5$ belong to $B_1$ and $B_2$, respectively, by symmetry assume that $u_2\in B_1$, $u_5\in B_2$. Note that for every $P_3$ in $L$, there is a vertex in $L$ which is anti-complete to that $P_3$. Let $v_i\in L$ be a vertex which is anti-complete to $\{u_2,u_1,u_5\}$, then $v_iu_L\notin E$ since $N_L(u_L)\subseteq N_L(u_1)$. Then $\{a_2,u_1,u_5,v_i\}$ or $\{a_1,u_L,u_2,v_i\}$ induces a $K_3+K_1$.

\case{2} $u_1\in B_1$.

Since $u_L$ and $u_1$ are not comparable in $G$, $N_{A\cup B_3}(u_L)\nsubseteq N_{A\cup B_3}(u_1)$. Let $b\in B_3$ such that $bu_L\in E$, $bu_1\notin E$. Since $B_1$ and $B_2$ are $P_3$-free, $u_2u_5\in B_2$, and at least one of $u_3$ and $u_4$ is in $B_1$. If $u_3,u_4\in B_1$, then one of $u_3$ and $u_4$ is adjacent to $b$, or else $\{a_1,u_3,u_4,b\}$ induces a $K_3+K_1$. If one of $u_3$ and $u_4$ is in $B_2$, say $u_3$, then one of $u_2$ and $u_3$ is adjacent to $b$, or else $\{b,a_3,a_1,u_1,u_2,u_3\}$ induces a $P_6$. So $b$ is adjacent to at least one of $u_2,u_3,u_4,u_5$. Suppose that $bu_2\in E$. Then each vertex in $V\setminus\{b,u_L,u_2\}$ is adjacent to exactly one vertex in $\{b,u_L,u_2\}$. So $bu_4\in E$, $bu_3,bu_5\notin E$. Suppose that $bu_3\in E$, then we have $bu_2\notin E$. 
Then $bu_4\notin E$, or else $\{b,u_3,u_4,u_1\}$ induces a $K_3+K_1$. Then $bu_5\in E$, or else $\{b,a_3,a_1,u_1,u_5,u_4\}$ induces a $P_6$. So by symmetry we may assume that $bu_2,bu_4\in E$, $bu_3,bu_5\notin E$. Note that for every $K_2$ in $L$, there is another $K_2$ which is anti-complete to the former $K_2$. Let $\{v_i,v_j\}$ be a $K_2$ in $L$ which is anti-complete to $\{u_1,u_2\}$. Since every vertex in $V\setminus\{u_2,u_L,b\}$ has exactly one neighbour in $\{u_2,u_L,b\}$, we have $v_ib,v_jb\in E$. Then $\{b,v_i,v_j,u_1\}$ induces a $K_3+K_1$.

So $u_Lu_5\notin E$. Note that every $P_3$ in $L$ is in a $C_5$ in $L$. Since $\{u_1,u_2,u_3,u_4,u_5\}$ is an arbitrary $C_5$ in $L$ containing $u_1,u_2$, $u_L$ has only one neighbour $u_2$ in $L$. Note that for every vertex $v$ in $L$, there is a $P_5$ in $L$ such that $v$ is an end of the $P_5$. Then $L\cup\{u_L\}$ contains a $P_6$. So $u_L$ does not exist. This implies that $B_1\cup B_2$ contains no comparable vertices. Then $B_1\cup B_2=L$ is an induced subgraph of the Clebsch graph.
\end{proof}

Since every $B\setminus B_i$ contains at most 16 vertices, $G$ contains at most 27 vertices. So there are only finitely many 6-vertex-critical ($P_6$,diamond)-free graphs with clique number 3.
\end{proof}

\end{document}